\newcommand\R{\mathbb{R}}
\newcommand\bbS{\mathbb{S}}
\newcommand\bbM{\mathbb{M}}
\newcommand\cA{\mathcal{A}}
\newcommand\cT{\mathcal{T}}
\newcommand\cP{\mathcal{P}}
\newcommand{\cF}{\mathcal{F}}
\newcommand{\cJ}{\mathcal{J}}
\newcommand{\cL}{\mathcal{L}}
\newcommand{\cI}{\mathcal{I}}
\newcommand{\cS}{\mathcal{S}}
\newcommand{\cX}{\mathcal{X}}
\newcommand{\cE}{\mathcal{E}}
\newcommand\mt{\mathtt{t}}
\newcommand\bx{\boldsymbol{x}}
\newcommand\bn{\boldsymbol{n}}
\newcommand\bF{\boldsymbol{f}}
\newcommand\bu{\boldsymbol{u}}
\newcommand\bv{\boldsymbol{v}}
\newcommand\bw{\boldsymbol{w}}
\newcommand\beps{\boldsymbol{\varepsilon}}
\newcommand\bsig{\boldsymbol{\sigma}}
\newcommand\btau{\boldsymbol{\tau}}
\newcommand\bze{\boldsymbol{\zeta}}
\newcommand\tD{\mathtt{D}}
\newcommand\tr{\mathop{\mathrm{tr}}\nolimits}
\newcommand\bdiv{\mathop{\mathbf{div}}\nolimits}
\newcommand\sdiv{\mathop{\textup{div}}\nolimits}
\renewcommand\sp{\mathop{\mathrm{sp}}\nolimits}
\newcommand\Ker{\mathop{\mathrm{ker}}\nolimits}
\newcommand\gap{\widehat{\delta}}
\newcommand{\jump}[1]{\llbracket #1 \rrbracket}
\newcommand{\mean}[1]{\{#1\}}
\newcommand{\vertiii}[1]{{\left\vert\kern-0.25ex\left\vert\kern-0.25ex\left\vert #1 
    \right\vert\kern-0.25ex\right\vert\kern-0.25ex\right\vert}}
\definecolor{lightblue}{rgb}{0.22,0.45,0.70}
\definecolor{lightgreen}{rgb}{0.22,0.50,0.25}
\DeclarePairedDelimiter\abs{\lvert}{\rvert}
\DeclarePairedDelimiter\norm{\lVert}{\rVert}
\DeclarePairedDelimiter{\inner}{(}{)}
\DeclarePairedDelimiter{\set}{\{}{\}}
\DeclarePairedDelimiter{\dual}{\langle}{\rangle}
\newtheorem{remark}{Remark}[section]
\newtheorem{lemma}{Lemma}[section]
\newtheorem{theorem}{Theorem}[section]
\newtheorem{prop}{Proposition}[section]
\newtheorem{corollary}{Corollary}[section]
\newtheorem{assumption}{Assumption}
\numberwithin{equation}{section}
\numberwithin{figure}{section}
\numberwithin{table}{section}
\date{}
\title{A DG method for a stress formulation of the elasticity eigenproblem with strongly imposed symmetry 
\thanks{This research was  supported by Ministerio de Ciencia e Innovaci\'on, Spain, Project PID2020-116287GB-I00.}}
\author{ 
{\sc Salim Meddahi}\thanks{Facultad de Ciencias, Universidad de Oviedo, Federico Garc\'ia Lorca,  18, 33007-Oviedo, Espa\~na, e-mail: {\tt salim@uniovi.es}.}
}
\begin{document}

\maketitle

\begin{abstract}
\noindent
 We introduce a pure--stress formulation of the elasticity eigenvalue problem with mixed boundary conditions. We propose an H(div)-based discontinuous Galerkin method that imposes strongly the symmetry of the stress  for the discretization of the eigenproblem. Under appropriate assumptions on the mesh and the degree of polynomial approximation,  we demonstrate the spectral correctness of the discrete scheme and derive optimal rates of convergence for eigenvalues and eigenfunctions. Finally,  we provide numerical examples in two and three dimensions.  

\end{abstract}
\medskip

\noindent
\textit{AMS Subject Classification:} 65N30, 65N12, 65N15,  74B10
\medskip

\noindent
\textbf{Keywords.}
Elasticity eigenproblem, mixed DG methods, Spectral analysis, error estimates
\section{Introduction}  

The finite element determination of  the vibration characteristics (natural frequencies and mode shapes) of elastic bodies is of great interest in structural mechanics. For example, the knowledge of the eigenfrequencies keeps the forced oscillations safe from resonance regimes, and the eigenmodes can be used to expand the solution  of transient elastodynamic problems in a Fourier series. We approach this topic from the perspective of the mixed formulation  derived from the Hellinger-Reissner variational principle. Namely, we are interested in variational formulations in which the Cauchy stress tensor prevails as the main unknown. In addition to the fact that accurate approximations of the stress are of paramount importance in many applications, it is well known that mixed formulations are immune to locking in the case of nearly incompressible materials.

In recent years, the theory of Descloux--Nassif--Rappaz \cite{DNR1,DNR2} for non-compact operators has been successfully applied to the mixed finite element analysis of eigenvalue problems in elasticity \cite{MMR,LMMR,meddahi}. The same approach allowed to deal  with   mixed formulations of the Stokes eigenproblem formulated in terms of a pseudo-stress \cite{MMR-stokes, LM} or the Cauchy stress tensor \cite{meddahi}. The symmetry requirement  for the stress tensor, which reflects the conservation of angular momentum, is a specific feature of the Hellinger-Reissner variational principle. The imposition of this restriction in association with H(div)-conformity gives rise to conforming Galerkin methods with a very large number of degrees of freedom, and which are difficult to implement \cite{ArnoldAwanouWinther, hu}. A common practice to overcome this drawback consists in enforcing the symmetry constraint variationally through a Lagrange multiplier. In this context,  \cite{MMR,meddahi} validated the use of the weakly symmetric mixed finite elements \cite{ArnoldFalkWinther,peers,CGG,guzman} for the stress formulation of the elasticity eigenproblem. 

Motivated  by the ability of DG methods to handle efficiently $hp$-adaptive strategies and to facilitate the  implementation of high order methods, an H(div)-based interior penalty version of \cite{MMR} (that retains the weak imposition of the symmetry) has been introduced in  \cite{LMMR}. Nevertheless, on account of \cite{ArnoldAwanouWintherNC, GGNC, Wu}, it is known that relaxing H(div)-conformity by using non-conforming or DG approximations for the elasticity source problem allows the incorporation of the symmetry constraint in the energy space at a reasonable computational cost. To our knowledge, the eigenvalue numerical analysis of such non-conforming/DG mixed methods is not yet available. In this work, our main issue is to determine whether a strong imposition of the symmetry constraint in the scheme introduced in \cite{LMMR} provides a correct eigenvalue approximation. 

The resulting DG-method approximates the stress by symmetric tensors with piecewise polynomial entries of degree $k\geq 1$, in 2D and 3D. We note that, the stress/displacement DG formulation introduced in \cite{Wu} for the elasticity source problem relays on the same discrete space for the stress. However, the displacement field is not present as an independent variable in our DG formulation because it is eliminated via the momentum balance equation. The same equation can be used to post-process the displacement at the discrete level. We prove that the inf-sup stability of the Scott-Vogelius  element \cite{vogelius} for the Stokes problem  (see Assumption~\ref{hyp} below)  is a sufficient condition for the spectral correctness of our DG method. We also obtain optimal error estimates for eigenvalues and eigenfunctions in an adequate DG norm. 

We finally highlight that, unlike \cite{MMR, LMMR}, our analysis does not rely on any extra Sobolev regularity of an auxiliary elasticity source problem. Hence, our analysis remains valid for eigenproblems posed in general domains, with mixed boundary conditions and with minimal requirements on material coefficients.

\medskip 
\noindent\textbf{Outline.}
The contents of this paper have been organized in the following manner. The remainder of this section contains notational conventions and definitions of Sobolev spaces. Section~\ref{sec2} presents the pure--stress formulation of the elasticity eigenproblem and provides a characterization of its spectrum. Preliminary definitions and auxiliary tools related with H(div)-based discontinuous Galerkin methods are collected in Section~\ref{sec3}. The  definition of the mixed DG method (with strong symmetry of the stress)  is detailed in Section~\ref{sec4}, where we also introduce a couple of operators that are useful in our analysis. The spectral correctness of the DG scheme is treated in Section~\ref{sec5}, together with the deduction of optimal error estimates for eigenvalues and eigenspaces. Several numerical results are presented in Section~\ref{sec6}, confirming the expected rates of convergence for different parameter sets including the nearly incompressible regime. 

\medskip 
\noindent\textbf{Notations and Sobolev spaces.} 
We denote the space of real matrices of order $d\times d$ by $\bbM$ and let $\bbS:= \set{\btau\in \bbM;\ \btau = \btau^{\mt} } $  be the subspace of symmetric matrices, where $\btau^{\mt}:=(\tau_{ji})$ stands for the transpose of $\btau = (\tau_{ij})$. The component-wise inner product of two matrices $\bsig, \,\btau \in\bbM$ is defined by $\bsig:\btau:= \sum_{i,j}\sigma_{ij}\tau_{ij}$. We also introduce $\tr\btau:=\sum_{i=1}^d\tau_{ii}$ and denote by $I$ the identity in $\bbM$.
Along this paper we convene to apply all differential operators row-wise.  Hence, given a tensorial function $\bsig:\Omega\to \bbM$ and a vector field $\bu:\Omega\to \R^d$, we set the divergence $\bdiv \bsig:\Omega \to \R^d$, the   gradient $\nabla \bu:\Omega \to \bbM$, and the linearized strain tensor $\beps(\bu) : \Omega \to \bbS$ as
\[
(\bdiv \bsig)_i := \sum_j   \partial_j \sigma_{ij} \,, \quad (\nabla \bu)_{ij} := \partial_j u_i\,,
\quad\hbox{and}\quad \beps(\bu) := \tfrac{1}{2}\left(\nabla\bu+(\nabla\bu)^{\mt}\right).
\]
 
Let $\Omega$ be a polyhedral Lipschitz domain of $\R^d$ $(d=2,3)$, with boundary $\partial \Omega$. For $s\in \R$, $H^s(\Omega,E)$ stands for the usual Hilbertian Sobolev space of functions with domain $\Omega$ and values in E, where $E$ is either $\R$, $\R^d$ or $\bbM$. In the case $E=\R$ we simply write $H^s(\Omega)$. The norm of $H^s(\Omega,E)$ is denoted $\norm{\cdot}_{s,\Omega}$ and the corresponding semi-norm $|\cdot|_{s,\Omega}$, indistinctly for $E=\R,\R^d,\bbM$. We use the convention  $H^0(\Omega, E):=L^2(\Omega,E)$ and let $(\cdot, \cdot)$ be the inner product in $L^2(\Omega, E)$, for $E=\R,\R^d,\bbM$, namely,
\[
  (\bu, \bv):=\int_\Omega \bu\cdot\bv\quad  \forall \bu,\bv\in L^2(\Omega,\R^d),\quad  (\bsig, \btau):=\int_\Omega \bsig:\btau \quad \forall \bsig, \btau\in L^2(\Omega,\bbM). 
\]
 

We consider the space $H(\bdiv, \Omega, E)$ of tensors $\btau \in L^2(\Omega, E)$ satisfying $\bdiv \btau \in L^2(\Omega, \R^d)$, and denote the corresponding norm  $\norm{\btau}^2_{H(\bdiv,\Omega)}:=\norm{\btau}_{0,\Omega}^2+\norm{\bdiv\btau}^2_{0,\Omega}$, where $E$ is either $\bbM$ or $\bbS$. Let $\bn$ be the outward unit normal vector to {$\partial \Omega$}. Let $\btau$ be a sufficiently regular symmetric tensor, Green's formula
\begin{equation}\label{Gf}
	(\btau, \beps(\bv)) + (\bdiv \btau, \bv) = \int_{\partial \Omega} \btau\bn\cdot \bv\qquad  \bv \in H^1(\Omega,\R^d),
\end{equation}
can be used to extend the normal trace operator $\btau \to (\btau|_{\partial \Omega})\bn$ to a linear continuous mapping $(\cdot|_{\partial \Omega})\bn:\, H(\bdiv, \Omega, \bbS) \to H^{-\frac{1}{2}}(\partial \Omega, \R^d)$, where $H^{-\frac{1}{2}}(\partial \Omega, \R^d)$ is the dual of $H^{\frac{1}{2}}(\partial \Omega, \R^d)$.

Throughout this paper, we shall use the letter $C$ to denote a generic positive constant independent of the mesh size  $h$, that may stand for different values at its different occurrences. Moreover, given any positive expressions $X$ and $Y$ depending on $h$, the notation $X \,\lesssim\, Y$  means that $X \,\le\, C\, Y$. 

\section{A stress formulation of the elasticity eigenproblem}\label{sec2}
Our aim is to determine the natural frequencies $\omega\in \R$ of an elastic structure with  mass density $\varrho$ and occupying a polyhedral Lipschitz domain  $\Omega\subset \mathbb R^d$ ($d=2,3$) . This amounts to solve the eigenproblem 
\begin{align}
 \bdiv \bsig + \omega^2\varrho(\bx) \bu &= 0 \quad \text{in $\Omega$}, \label{modela}
 \\[1ex]
 \cA(\bx)\bsig &= \beps(\bu) \quad \text{in $\Omega$}, \label{modelb}
\end{align}
where  $\bu:\Omega\times[0, T] \to \R^d$ is the displacement field and $\bsig:\Omega\to \bbS$ is the Cauchy stress tensor. The symmetric and positive-definite 4$^{th}$-order tensor $\cA(\bx): \bbS\to \bbS$ involved in the linear material law \eqref{modelb} is known as the compliance tensor. We assume that there exist $a^+ > a^->0$ such that  
\[
  a^- \bze:\bze\, \leq \cA(\bx) \bze :\bze \leq a^+ \, \bze:\bze\quad \forall \bze\in \bbS,\quad \text{a.e. in $\Omega$}.  
\]
We also suppose that there exists a polygonal/polyhedral disjoint partition $\big\{\bar\Omega_j,\ j= 1,\ldots,J\big\}$ of  $\bar \Omega$  such that $\varrho|_{\Omega_j}:= \varrho_j>0$ for all $j=1,\ldots,J$ and let $\varrho^+:= \max_j \varrho_j$ and $\varrho^-:= \min_j \varrho_j$. 

We impose the boundary condition $\bu=\mathbf 0$ on a subset $\Gamma_D\subset\Gamma:= \partial \Omega$ of positive surface measure and let the structure free of stress on $\Gamma_N:= \Gamma \setminus \Gamma_D$. Here, we opt for combining the equilibrium equation \eqref{modela} with the constitutive law \eqref{modelb} to eliminate the displacement field $\bu$ and impose $\bsig$ as a primary variable. This procedure leads to the following  eigensystem:  find eigenmodes $0 \neq \bsig:\Omega\to \bbS$ and eigenfrequencies  $\omega\in \mathbb{R}$ such that,
\begin{align}\label{modelPb}
\begin{split}
  -\beps\left(\tfrac{1}{\varrho} \bdiv  \bsig \right) &= \omega^2  \cA\bsig     \quad \text{ in $\Omega$},
 \\
\tfrac{1}{\varrho} \bdiv \bsig &= 0  \quad \text{ on $\Gamma_D$},
\\
\bsig\bn &= \mathbf 0 \quad \text{ on $\Gamma_N$},
\end{split}
\end{align}
where $\bn$ stands for the exterior unit normal vector on $\Gamma$.

In the following, we write $H$ for the space $L^2(\Omega,\bbS)$ endowed with the $\cA$-weighted inner product $\inner*{\bsig,\btau}_{\cA} := \inner*{\cA\bsig,\btau}$ and denote the corresponding norm $\norm*{\btau}^2_\cA:= \inner*{\cA\btau,\btau}$.  The eigenfunctions $\bsig$ will be sought in the closed subspace $X$ of $H(\bdiv, \Omega, \bbS)$ defined by
\[
X := \set*{\btau\in H(\bdiv, \Omega, \bbS); \quad 
	\dual*{\btau\bn,\boldsymbol{\phi}}_{\Gamma}= 0 
	\quad \text{$\forall\boldsymbol{\phi} \in H^{1/2}(\Gamma,\R^d)$,\, $\boldsymbol{\phi}|_{\Gamma_D} = \mathbf{0}$}},
\]
where $\dual*{\cdot, \cdot}_\Gamma$ holds for the duality pairing between $H^{\frac{1}{2}}(\Gamma,\R^d)$ and $H^{-\frac{1}{2}}(\Gamma,\R^d)$. We introduce the symmetric and positive semidefinite bilinear form $c:\, X\times X \to \R$ given by 
\[
   c(\bsig, \btau) := \inner*{\tfrac{1}{\varrho}\bdiv\bsig, \bdiv\btau}
\]
and endow $X$ with the Hilbertian inner product $a\inner*{\bsig, \btau} := \inner*{\bsig,\btau}_{\cA} + c(\bsig, \btau)$. We denote the corresponding norm $\norm{\btau}^2_X := a(\btau, \btau)$.

Testing the first equation of \eqref{modelPb} with $\btau\in X$ and applying Green's formula \eqref{Gf} we deduce, after a shift argument, the following  pure--stress variational formulation of the eigenproblem: find $0\ne \bsig \in X$ and $\kappa= 1 + \omega^2 \in \R$ such that
\begin{equation}\label{S}
	a\inner*{\bsig,\btau} = \kappa \inner*{\bsig,\btau}_{\cA}, \quad \forall \btau \in X. 
\end{equation}

We introduce the source operator $\tilde T:\, L^2(\Omega, \bbS)\to X$ corresponding to the variational eigenproblem \eqref{S}; which is  defined for any $\bF\in L^2(\Omega, \bbS)$ by 
	\begin{equation}\label{source}
		a\inner*{\tilde T\bF,\btau} =  \inner*{\bF,\btau}_{\cA}, \quad \forall \btau \in X. 
	\end{equation}
Obviously, $\tilde T$ is linear and bounded, actually it holds,  
\begin{equation}\label{cotaT}
	\norm*{\tilde T\bF}_X \leq \norm*{\bF}_\cA\quad \forall \bF\in H. 
\end{equation}
We denote the $H^1$-Sobolev space with incorporated Dirichlet boundary conditions on either $\Gamma_D$ or $\Gamma_N$ by 
\[
  H^1_\star(\Omega,\R^d):= \set*{\bv \in H^1(\Omega,\R^d);\ v|_{\Gamma_\star} = \mathbf 0},\quad \star \in \set*{D, N}.
\]
It is important to notice that testing \eqref{source} with a tensor $\btau:\Omega \to \bbS$ whose entries are  indefinitely differentiable and compactly supported  in $\Omega$ proves that  $\beps(\frac{1}{\varrho} \bdiv (\tilde T\bF)) = \cA(\tilde T - I)\bF \in L^2(\Omega,\bbS)$. Hence, by virtue of Korn's inequality, $\frac{1}{\varrho} \bdiv (\tilde T\bF)\in H^1(\Omega,\R^d)$ and it follows readily from Green's formula \eqref{Gf} that $\frac{1}{\varrho} \bdiv (\tilde T\bF)$ vanishes on $\Gamma_D$. In other words, $\frac{1}{\varrho} \bdiv (\tilde T\bF) \in H^1_D(\Omega,\R^d)$ and  there exists $C>0$ such that 
	\begin{equation}\label{regT}
	\norm*{\tfrac{1}{\varrho} \bdiv (\tilde T\bF)}_{1,\Omega} \leq C \norm{\bF}_\cA, \quad \forall \bF\in H. 	
	\end{equation}
 The operator $T:=\tilde T|_X:\, X \to X$ is relevant in our analysis because its eigenvalues and those of  problem \eqref{S} are reciprocal to each other and the corresponding eigenfunctions are the same. A full description of the spectrum of $T$ will then solve problem \eqref{S}. 
   
We consider the direct sum decomposition $X = K\oplus K^\bot$  into closed subspaces
\[
K := \set*{\btau\in X;\ \bdiv \btau = 0 \ \text{in $\Omega$}} \quad \text{and} \quad K^\bot:= \set*{\bsig \in X;\ \inner*{\bsig, \btau}_{\cA} = 0, \ \forall \btau \in K},  
\] 
 which are orthogonal with respect to both $(\cdot, \cdot)_{\cA}$ and $a(\cdot, \cdot)$. It is clear that $\kappa=1$ is an eigenvalue of \eqref{S} with associated eigenspace $K$. Consequently, as $K$ is not a finite-dimensional  subspace of $X$, $T$ is not a compact operator.

\begin{lemma}\label{P}
The orthogonal projection $P$ in $X$ onto $K^\bot$ is characterized, for any $\bsig\in X$, by  $P\bsig :=\widetilde \bsig$ where $\widetilde \bsig = \cA^{-1} \beps(\widetilde \bu)$ and  $\widetilde\bu\in H^1_{D}(\Omega,\R^d)$ is the unique solution of 
\begin{equation}\label{korn}
	\inner*{\cA^{-1}\beps(\widetilde{\bu}), \beps(\bv)} = -\inner*{\bdiv\bsig, \bv},\quad \forall \bv \in H^1_D(\Omega, \R^d).
\end{equation}
\end{lemma}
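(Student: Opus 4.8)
The plan is to exhibit, for a given $\bsig \in X$, the splitting $\bsig = \widetilde\bsig + (\bsig - \widetilde\bsig)$ and to check that $\widetilde\bsig \in K^\bot$ while $\bsig - \widetilde\bsig \in K$; since $X = K \oplus K^\bot$ is an orthogonal decomposition, this identifies $\widetilde\bsig$ with $P\bsig$. First I would settle the well-posedness of \eqref{korn}. The bilinear form $(\bu,\bv) \mapsto \inner*{\cA^{-1}\beps(\bu),\beps(\bv)}$ is clearly bounded, and it is coercive on $H^1_D(\Omega,\R^d)$ because the uniform positive-definiteness of $\cA$ transfers to $\cA^{-1}$, giving $\inner*{\cA^{-1}\beps(\bv),\beps(\bv)} \geq (a^+)^{-1}\norm{\beps(\bv)}_{0,\Omega}^2$, while Korn's inequality applies thanks to $\Gamma_D$ having positive surface measure. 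As $\bdiv\bsig \in L^2(\Omega,\R^d)$, the functional $\bv \mapsto -\inner*{\bdiv\bsig,\bv}$ is bounded, so Lax--Milgram produces a unique $\widetilde\bu \in H^1_D(\Omega,\R^d)$ and hence a well-defined $\widetilde\bsig := \cA^{-1}\beps(\widetilde\bu) \in L^2(\Omega,\bbS)$.

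Next I would identify $\bdiv\widetilde\bsig$. Testing \eqref{korn} against smooth vector fields $\bv$ compactly supported in $\Omega$ and using that $\widetilde\bsig$ is symmetric (so $\widetilde\bsig:\nabla\bv = \widetilde\bsig:\beps(\bv)$), the identity $\inner*{\widetilde\bsig,\beps(\bv)} = -\inner*{\bdiv\bsig,\bv}$ becomes the distributional statement $\bdiv\widetilde\bsig = \bdiv\bsig$ in $L^2(\Omega,\R^d)$. In particular $\widetilde\bsig \in H(\bdiv,\Omega,\bbS)$ and $\bdiv(\bsig - \widetilde\bsig) = \mathbf 0$.

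Then I would verify the boundary condition placing $\widetilde\bsig$ in $X$ and the orthogonality placing it in $K^\bot$, both by Green's formula \eqref{Gf} while carefully tracking boundary terms. For the first, given $\boldsymbol{\phi} \in H^{1/2}(\Gamma,\R^d)$ with $\boldsymbol{\phi}|_{\Gamma_D} = \mathbf 0$, choose a lifting $\bv \in H^1_D(\Omega,\R^d)$ with $\bv|_\Gamma = \boldsymbol{\phi}$; then \eqref{Gf} combined with \eqref{korn} and $\bdiv\widetilde\bsig = \bdiv\bsig$ gives $\dual*{\widetilde\bsig\bn,\boldsymbol{\phi}}_\Gamma = \inner*{\widetilde\bsig,\beps(\bv)} + \inner*{\bdiv\widetilde\bsig,\bv} = -\inner*{\bdiv\bsig,\bv} + \inner*{\bdiv\bsig,\bv} = 0$, so $\widetilde\bsig \in X$. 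For the second, take any $\btau \in K$; since $\cA\widetilde\bsig = \beps(\widetilde\bu)$ we have $\inner*{\widetilde\bsig,\btau}_\cA = \inner*{\beps(\widetilde\bu),\btau}$, and because $\bdiv\btau = \mathbf 0$ Green's formula \eqref{Gf} reduces this to $\dual*{\btau\bn,\widetilde\bu}_\Gamma$, which vanishes since the trace of $\widetilde\bu$ is an admissible test function in the definition of $X$ (it satisfies $\widetilde\bu|_{\Gamma_D} = \mathbf 0$). Hence $\widetilde\bsig \in K^\bot$.

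Combining $\bsig - \widetilde\bsig \in X$ with $\bdiv(\bsig - \widetilde\bsig) = \mathbf 0$ shows $\bsig - \widetilde\bsig \in K$, so $\bsig = \widetilde\bsig + (\bsig - \widetilde\bsig)$ is precisely the $K^\bot \oplus K$ decomposition and $P\bsig = \widetilde\bsig$ by its uniqueness. I expect the only genuinely delicate point to be the bookkeeping of the two boundary pairings: one must match the strong Dirichlet condition $\widetilde\bu|_{\Gamma_D} = \mathbf 0$ encoded in $H^1_D(\Omega,\R^d)$ against the weak normal-trace condition (effectively supported on $\Gamma_N$) defining $X$, so that each application of \eqref{Gf} annihilates its boundary contribution. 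Everything else is routine once \eqref{korn} has been solved.
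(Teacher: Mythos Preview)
Your proposal is correct and follows essentially the same route as the paper: both arguments hinge on showing $\bdiv\widetilde\bsig = \bdiv\bsig$ (so $\bsig-\widetilde\bsig\in K$) and on using Green's formula together with $\widetilde\bu\in H^1_D(\Omega,\R^d)$ to obtain $(\widetilde\bsig,\btau)_\cA=(\beps(\widetilde\bu),\btau)=0$ for all $\btau\in K$. The only cosmetic difference is that the paper phrases the conclusion as ``$P$ is a bounded idempotent with $\Ker P=K$ and range $K^\bot$,'' whereas you invoke the uniqueness of the orthogonal decomposition directly; your explicit verification that $\widetilde\bsig\in X$ (the normal-trace condition on $\Gamma_N$) is a detail the paper leaves implicit.
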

\begin{proof}
	We first point out that Korn's inequality provides the stability estimate  
\begin{equation}\label{stabi} 
	\norm*{\tilde \bu}_{1,\Omega}  \leq C \norm*{\bdiv \bsig}_{0,\Omega}.
\end{equation}
By definition, \eqref{stabi} also ensures that $\norm*{P\bsig}_{0,\Omega} \leq C_1 \norm*{\bdiv \bsig}_{0,\Omega}$. Moreover, $\bdiv P \bsig = \bdiv \bsig$ by construction, which ensures that $P:\, X\to X$ is bounded. Moreover,  it is clear that $P\circ P = P$ and $\Ker{P} = K$. It remains to show that the range of $P$ coincides with $K^\bot$. To this end, we notice that, for any $\bsig\in X$,
\[
  \inner*{P\bsig, \btau}_{\cA} = \inner*{\beps(\widetilde \bu) , \btau }= (\nabla \widetilde \bu, \btau) = 0,\quad  \forall \btau \in K,
\]
which proves that $P(X)\subset K^{\bot}$. The reciprocal inclusion is a consequence of $K^{\bot} = P (K^{\bot}) + (I - P)K^{\bot} = P (K^{\bot}) \subset P(X)$, where we used that  $(I - P)X \subset K$, and the result follows.  

\end{proof}

\begin{lemma}\label{compact}
	The inclusions $P(X)\hookrightarrow H$ and $P(X)\cap T(X)\hookrightarrow X$ are compact.
\end{lemma}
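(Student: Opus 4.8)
The plan is to deduce both compact inclusions from the Rellich embedding $H^1(\Omega,\R^d)\hookrightarrow L^2(\Omega,\R^d)$ through the displacement representation of Lemma~\ref{P}, upgrading the resulting weak limits to strong $L^2$-convergence by energy arguments rather than by any elliptic regularity of the elasticity operator.

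For the first inclusion $P(X)\hookrightarrow H$, I would pick a sequence $\bsig_n\in P(X)=K^\bot$ bounded in $X$ and write $\bsig_n=\cA^{-1}\beps(\bu_n)$ with $\bu_n\in H^1_D(\Omega,\R^d)$ as in Lemma~\ref{P}. The stability bound \eqref{stabi} gives $\norm*{\bu_n}_{1,\Omega}\lesssim\norm*{\bdiv\bsig_n}_{0,\Omega}\lesssim\norm*{\bsig_n}_X$, so $\{\bu_n\}$ is bounded in $H^1_D(\Omega,\R^d)$. By Rellich I extract a subsequence with $\bu_n\to\bu$ strongly in $L^2$ and $\beps(\bu_n)\rightharpoonup\beps(\bu)$ weakly in $L^2$; correspondingly $\bsig_n\rightharpoonup\bsig:=\cA^{-1}\beps(\bu)$ and $\bdiv\bsig_n\rightharpoonup\bdiv\bsig$ weakly in $L^2$. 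Passing to the limit in \eqref{korn} (read for $\bsig_n$, with $\bu_n$ as its displacement) shows that $\bu$ solves the same problem with datum $-(\bdiv\bsig,\cdot)$. The decisive step is to test \eqref{korn} with $\bv=\bu_n$ to obtain the energy identity $(\cA^{-1}\beps(\bu_n),\beps(\bu_n))=-(\bdiv\bsig_n,\bu_n)$; since $\bdiv\bsig_n\rightharpoonup\bdiv\bsig$ while $\bu_n\to\bu$ strongly, the right-hand side tends to $-(\bdiv\bsig,\bu)$, which equals $(\cA^{-1}\beps(\bu),\beps(\bu))$ upon testing the limit problem with $\bu$. Weak convergence of $\beps(\bu_n)$ together with convergence of the energies then forces $\beps(\bu_n)\to\beps(\bu)$ strongly in $L^2$, whence $\bsig_n\to\bsig$ in $H$.

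For the second inclusion $P(X)\cap T(X)\hookrightarrow X$, I would take $\bsig_n=T\bF_n$ in $P(X)\cap T(X)$ with $\{\bF_n\}$ bounded in $H$. By \eqref{cotaT} the $\bsig_n$ are bounded in $X$, while the regularity estimate \eqref{regT} bounds the scaled divergences $\tfrac{1}{\varrho}\bdiv\bsig_n$ in $H^1_D(\Omega,\R^d)$. Rellich then yields a subsequence along which $\tfrac{1}{\varrho}\bdiv\bsig_n$ converges in $L^2$, and since $\varrho$ is bounded above and below this gives convergence of $\bdiv\bsig_n$ in $L^2$. To control the stresses themselves I would invoke that $K^\bot$ is a \emph{linear} subspace on which the Korn-type bound $\norm*{\btau}_\cA\lesssim\norm*{\bdiv\btau}_{0,\Omega}$ holds (immediate from \eqref{stabi} and $\btau=\cA^{-1}\beps(\bw)$): applied to the differences $\bsig_n-\bsig_m\in K^\bot$ it gives $\norm*{\bsig_n-\bsig_m}_\cA\lesssim\norm*{\bdiv(\bsig_n-\bsig_m)}_{0,\Omega}\to 0$, so $\{\bsig_n\}$ is Cauchy in $H$. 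As $\{\bdiv\bsig_n\}$ is also Cauchy in $L^2$, the sequence converges in the $H(\bdiv,\Omega,\bbS)$-norm, i.e. in $X$.

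I expect the main obstacle to be the first inclusion, precisely the passage from weak to strong $L^2$-convergence of the stresses without any elliptic regularity; this is exactly what the energy-convergence argument built on \eqref{korn} resolves. In the second inclusion this difficulty does not reappear, because the $H^1$-regularity of the divergence delivered by \eqref{regT} is genuine and lets Rellich act directly on $\bdiv\bsig_n$; the only point demanding care is that the coercivity transferring divergence-convergence into stress-convergence must be used on the linear space $K^\bot$, so that it applies to differences of the sequence.
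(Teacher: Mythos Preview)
Your argument is correct and follows essentially the same route as the paper: Rellich compactness on the displacements (first part) and on the divergences via \eqref{regT} (second part), combined with the identity \eqref{korn} to transfer $L^2$-convergence to the stresses. The only differences are cosmetic: for the first inclusion the paper bypasses your weak-limit/energy-convergence step by applying Green's identity directly to differences, $\norm{\widetilde\bsig_p-\widetilde\bsig_q}_\cA^2 = -(\widetilde\bu_p-\widetilde\bu_q,\bdiv(\widetilde\bsig_p-\widetilde\bsig_q))\to 0$, and for the second it merely asserts compactness of $\{\bsig\in P(X):\tfrac{1}{\varrho}\bdiv\bsig\in H^1(\Omega,\R^d)\}\hookrightarrow X$, which your Korn-type bound $\norm{\btau}_\cA\lesssim\norm{\bdiv\btau}_{0,\Omega}$ on $K^\bot$ makes explicit.
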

\begin{proof}
	Let $\set*{\bsig_n}_n$ be a weakly convergent sequence  in $X$. The continuiuty of $P:X\to X$ implies that the sequence $\set*{\widetilde\bsig_n}_n:=\set*{P\bsig_n}_n$  is also weakly convergent in $X$. By definition, $\widetilde\bsig_n = \cA^{-1} \beps(\widetilde \bu_n)$, where $\widetilde \bu_n\in H^1_{D}(\Omega, \R^d)$ solves \eqref{korn} with right-hand side $-\bdiv\bsig_n$. It follows from \eqref{stabi} that $\widetilde \bu_n$ is bounded in $H^1_{D}(\Omega, \R^d)$ and the compactness of the embedding $H^1(\Omega,\R^d) \hookrightarrow  L^2(\Omega, \R^d)$ implies that $\set*{\widetilde \bu_n}_n$ admits a subsequence (denoted again $\set*{\widetilde \bu_n}_n$ ) that converges strongly in $L^2(\Omega,\R^d)$. Next, we deduce from Green's identity  
	\[
  \inner*{\widetilde \bsig_p - \widetilde \bsig_q, \widetilde \bsig_p - \widetilde \bsig_q}_{\cA} = \inner*{\beps(\widetilde \bu_p - \widetilde \bu_q), \widetilde \bsig_p - \widetilde \bsig_q}  = - \inner*{\widetilde \bu_p - \widetilde \bu_q, \bdiv(\widetilde \bsig_p - \widetilde \bsig_q)},
\]
that $\set*{\widetilde\bsig_n}_n$ is a Cauchy sequence in $H$, which implies that the  embedding $P(X)\hookrightarrow H$ is compact. 

Finally, it follows from \eqref{regT} that  
\[
  T(X)\cap P(X) \subset \set*{\bsig \in P(X);\ \tfrac{1}{\varrho} \bdiv \bsig \in H^1(\Omega, \R^d)},
\]
and the compactness of the embedding $T(X)\cap P(X)\hookrightarrow X$ is a consequence of the fact that the inclusion $\set*{\bsig \in P(X);\ \tfrac{1}{\varrho} \bdiv \bsig \in H^1(\Omega, \R^d)} \subset X$ is compact.
\end{proof}


We point out that $\tilde T$ is symmetric with respect to $(\cdot, \cdot)_{\cA}$, which implies that $P(X) = K^{\bot}$ is $T$-invariant. Consequently, it holds $T(P(X)) \subset P(X)\cap T(X)$ and Lemma~\ref{compact} implies that the $a(\cdot, \cdot)$-symmetric and positive definite operator $T:\, K^{\bot} \to K^{\bot}$ is compact. Therefore, we have the following characterization  of the spectrum of $T$.

\begin{theorem}\label{specT}
The spectrum $\sp(T)$ of $T$ is given by  $\sp(T) = \set{0, 1} \cup \set{\eta_k}_{k\in \mathbb{N}}$, where $\set{\eta_k}_k\subset (0,1)$ is a sequence of finite-multiplicity eigenvalues of $T$ that converges to 0.  The ascent of each of these eigenvalues is $1$ and the corresponding eigenfunctions lie in $P(X)$. Moreover, $\eta=1$ is an infinite-multiplicity eigenvalue of $T$ with associated eigenspace $K$ and  $\eta=0$ is not an eigenvalue.
\end{theorem}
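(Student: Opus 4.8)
The plan is to exploit the $T$-invariant $a(\cdot,\cdot)$-orthogonal splitting $X = K \oplus K^\bot$ noted before the statement, treat each summand separately, and reduce the whole assertion to the spectral theorem for compact self-adjoint positive operators together with a handful of energy identities.

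First I would compute $T$ on $K$. For $\bF \in K$ one has $\bdiv \bF = \mathbf 0$, hence $c(\bF, \btau) = 0$ and therefore $a\inner*{\bF, \btau} = \inner*{\bF, \btau}_{\cA}$ for every $\btau \in X$; comparing with the defining relation \eqref{source} this gives $\tilde T \bF = \bF$, i.e. $T|_K = I$. Thus $\eta = 1$ is an eigenvalue whose eigenspace contains $K$, and since $K$ is infinite-dimensional its multiplicity is infinite and its ascent is trivially $1$. I would then pass to $K^\bot = P(X)$, which is $T$-invariant because $\tilde T$ is $\inner*{\cdot,\cdot}_{\cA}$-symmetric. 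On this subspace $T$ is $a(\cdot,\cdot)$-symmetric, positive definite, and, by Lemma~\ref{compact}, compact; the spectral theorem then furnishes a sequence $\set*{\eta_k}_k$ of positive eigenvalues of finite multiplicity, each semisimple (ascent $1$) with eigenfunctions in $P(X)$, accumulating only at $0$, while $0$ itself lies in the spectrum of $T|_{K^\bot}$ because a compact operator on the infinite-dimensional space $K^\bot$ cannot be invertible.

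To confine the $\eta_k$ to $(0,1)$ I would use the Rayleigh quotient: for an eigenfunction $\mathbf 0 \ne \bsig \in K^\bot$ the identity $a\inner*{T\bsig,\bsig} = \inner*{\bsig,\bsig}_{\cA}$ gives $\eta_k = \norm*{\bsig}_{\cA}^2 / \bigl(\norm*{\bsig}_{\cA}^2 + c(\bsig,\bsig)\bigr) \in (0,1]$, with value $1$ exactly when $c(\bsig,\bsig) = \inner*{\tfrac{1}{\varrho}\bdiv\bsig,\bdiv\bsig} = 0$; but that forces $\bdiv\bsig = \mathbf 0$, hence $\bsig \in K \cap K^\bot = \set*{\mathbf 0}$, so in fact $\eta_k < 1$. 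That $\eta = 0$ is not an eigenvalue follows from injectivity: $T\bsig = \mathbf 0$ yields $\inner*{\bsig,\bsig}_{\cA} = a\inner*{T\bsig,\bsig} = 0$ and hence $\bsig = \mathbf 0$. Assembling the two invariant blocks via $\sp(T) = \sp(T|_K) \cup \sp(T|_{K^\bot})$ delivers exactly $\set*{0,1} \cup \set*{\eta_k}_k$. The single point I would treat with care --- rather than a genuine obstacle --- is that the eigenvalue sequence does not terminate: one must check that $K^\bot$ is infinite-dimensional (which follows from the injectivity of $\bdiv$ on $K^\bot$ together with the infinite-dimensionality of its range $\bdiv(X)$), so that $0$ is a true accumulation point and appears in $\sp(T)$ without being an eigenvalue.
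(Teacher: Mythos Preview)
Your proposal is correct and follows exactly the approach the paper sets up in the paragraph preceding the theorem: exploit the $T$-invariant $a(\cdot,\cdot)$-orthogonal decomposition $X=K\oplus K^\bot$, observe $T|_K=I$, and invoke the spectral theorem for the compact $a(\cdot,\cdot)$-symmetric positive operator $T|_{K^\bot}$ furnished by Lemma~\ref{compact}. The paper states the theorem without further proof, and your write-up simply supplies the details (Rayleigh-quotient bound $\eta_k<1$, injectivity argument for $\eta=0$, infinite-dimensionality of $K^\bot$) that the paper leaves implicit.
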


\section{Definitions and auxiliary results}\label{sec3}

We consider  a sequence $\{\mathcal{T}_h\}_h$ of shape-regular simplicial meshes that subdivide the domain $\bar \Omega$ into  simplices  $K$ of diameter $h_K$. The parameter $h:= \max_{K\in \cT_h} \{h_K\}$ represents the mesh size of $\cT_h$.  We assume that $\mathcal{T}_h$ is aligned with the partition $\bar\Omega = \cup_{j= 1}^J \bar{\Omega}_j$ and that $\cT_h(\Omega_j) := \set*{K\in \cT_h;\ K\subset \Omega_j }$ is a shape-regular mesh of $\bar\Omega_j$ for all $j=1,\cdots, J$ and  all $h$.

For all $s\geq 0$, we consider the broken Sobolev space    
\[
  H^s(\cup_j\Omega_j) := \set*{ v\in L^2(\Omega);\ v|_{\Omega_j}\in H^s(\Omega_j),\ \forall j =1,\ldots,J }
\]
corresponding to the partition $\bar\Omega = \cup_{j= 1}^J \bar{\Omega}_j$. 
Its vectorial and tensorial versions are denoted $H^s(\cup_j\Omega_j,\R^d)$ and $H^s(\cup_j\Omega_j,\bbM)$, respectively. Likewise, the broken Sobolev space with respect to the subdivision of $\bar \Omega$ into $\cT_h$ is  
\[
 H^s(\cT_h,E):=
 \set*{\bv \in L^2(\Omega, E): \quad \bv|_K\in H^s(K, E)\quad \forall K\in \cT_h},\quad \text{for $E \in \set{ \R, \R^d, \bbM}$}. 
\]
For each $\bv:=\set{\bv_K}\in H^s(\cT_h,\R^d)$ and $\btau:= \set{\btau_K}\in H^s(\cT_h,\bbM)$ the components $\bv_K$ and $\btau_K$  represent the restrictions $\bv|_K$ and $\btau|_K$. When no confusion arises, the restrictions of these functions will be written without any subscript.

Hereafter, given an integer $m\geq 0$ and a domain $D\subset \mathbb{R}^d$, $\cP_m(D)$ denotes the space of polynomials of degree at most $m$ on $D$. We introduce the space   
\[
 \cP_m(\cT_h) := 
 \set{ v\in L^2(\Omega): \ v|_K \in \cP_m(K),\ \forall K\in \cT_h }
 \]
 of piecewise polynomial functions relatively to $\cT_h$. We also consider the space $\cP_m(\cT_h,E)$ of functions with values in $E$ and entries in $\cP_m(\cT_h)$, where $E$ is either $\R^d$, $\bbM$ or $\bbS$. 
 
 Let us introduce now notations related to DG approximations of $H(\text{div})$-type spaces. We say that a closed subset $F\subset \overline{\Omega}$ is an interior edge/face if $F$ has a positive $(d-1)$-dimensional measure and if there are distinct elements $K$ and $K'$ such that $F =\bar K\cap \bar K'$. A closed subset $F\subset \overline{\Omega}$ is a boundary edge/face if there exists $K\in \cT_h$ such that $F$ is an edge/face of $K$ and $F =  \bar K\cap \Gamma$. We consider the set $\cF_h^0$ of interior edges/faces, the set $\cF_h^\partial$ of boundary edges/faces and let $\cF(K):= \set{F\in \cF_h;\quad F\subset \partial K}$ be the set of edges/faces composing the boundary of $K$. We assume that the boundary mesh $\cF_h^\partial$ is compatible with the partition $\partial \Omega = \Gamma_D \cup \Gamma_N$ in the sense that, if 
$
\cF_h^D = \set*{F\in \cF_h^\partial:\, F\subset \Gamma_D}$  and $\cF_h^N = \set*{F\in \cF_h^\partial:\, F\subset \Gamma_N},
$
then $\Gamma_D = \cup_{F\in \cF_h^D} F$ and $\Gamma_N = \cup_{F\in \cF_h^N} F$. We denote   
\[
  \cF_h := \cF_h^0\cup \cF_h^\partial\qquad \text{and} \qquad \cF^*_h:= \cF_h^{0} \cup \cF_h^{N},
\]
and for all $K\in \cT_h$. Obviously, in the case $\Gamma_D = \Gamma$ we have that $\cF^*_h = \cF^0_h$. 

 We will need the space given on the skeletons of the triangulations $\cT_h$  by $L^2(\cF^*_h):= \bigoplus_{F\in \cF^*_h} L^2(F)$. Its vector valued version is denoted $L^2(\cF^*_h,\R^d)$. Here again, the components $\bv_F$ of $\bv := \set{\bv_F}\in L^2(\cF^*_h,\R^d)$  coincide with the restrictions $\bv|_F$.  We endow $L^2(\cF^*_h,\R^d)$ with the inner product 
\[
(\bu, \bv)_{\cF^*_h} := \sum_{F\in \cF^*_h} \int_F \bu_F\cdot \bv_F\quad \forall \bu,\bv\in L^2(\cF^*_h,\R^d),
\]
and denote the corresponding norm $\norm*{\bv}^2_{0,\cF^*_h}:= (\bv,\bv)_{\cF^*_h}$. From now on, $h_\cF\in L^2(\cF^*_h)$ is the piecewise constant function defined by $h_\cF|_F := h_F$ for all $F \in \cF^*_h$ with $h_F$ denoting the diameter of edge/face $F$. By virtue of our hypotheses on $\varrho$ and on the triangulation $\cT_h$, we may consider that $\varrho$ is an element of $\cP_0(\cT_h)$ and denote $\varrho_K:= \varrho|_K$ for all $K\in \cT_h$. We introduce $\varrho_\cF\in L^2(\cF^*_h)$ defined by $\varrho_F := \min\{ \varrho_K, \varrho_{K'} \}$ if $K\cap K' = F$ and $\varrho_F := \varrho_K$ if $F\cap K \in \cF^N_h$. 

Given  $\bv\in H^s(\cT_h,\R^d)$ and $\btau\in H^s(\cT_h,\bbM)$, with $s>\frac{1}{2}$, we define averages $\mean{\bv}\in L^2(\cF^*_h,\R^d)$ and jumps $\jump{\btau}\in L^2(\cF^*_h,\R^d)$ by
\[
 \mean{\bv}_F := (\bv_K + \bv_{K'})/2 \quad \text{and} \quad \jump{\btau}_F := 
 \btau_K \bn_K + \btau_{K'}\bn_{K'} 
 \quad \forall F \in \cF(K)\cap \cF(K'),
\]
with the conventions 
\[
 \mean{\bv}_F := \bv_K  \quad \text{and} \quad \jump{\btau}_F := 
 \btau_K \bn_K  
 \quad \forall F \in \cF(K),\,\, F\in \cF_h^\partial,
\]
where $\bn_K$ is the outward unit normal vector to $\partial K$.

For any $k\geq 1$, we 
let $X^{DG}(h) :=X + X_h^{DG}$, with $X_h^{DG}:=\cP_{k}(\cT_h,\bbS)$.  Given $\btau \in X_h^{DG}$, we define $\bdiv_h \btau \in  L^2(\Omega,\R^d)$ by $\bdiv_h \btau|_{K} := \bdiv \btau_K$ for all $K\in \cT_h$ and endow $X_k^{DG}(h)$ with the norm
\[
 \vertiii{\btau}^2 := \norm*{\btau}^2_\cA + \norm*{\tfrac{1}{\sqrt{\varrho}}\bdiv_h  \btau}^2_{0,\Omega} + \norm*{\varrho_\cF^{-\frac{1}{2}} h_{\cF}^{-\frac{1}{2}} \jump{\btau}}^2_{0,\cF^*_h}.
\]
If it happens that $\bdiv_h \btau \in H^s(\cT_h,\R^d)$ with $s>\frac{1}{2}$,  we also introduce
\[
  \vertiii{\btau}^2_* := \vertiii{\btau}^2 + \norm*{\varrho_\cF^{\frac{1}{2}} h_F^{\frac{1}{2}} \mean{\tfrac{1}{\varrho}\bdiv_h \btau}}^2_{0,\cF^*_h}.
\]
It is important to notice that $\vertiii{\btau} = \norm*{\btau}_X$ for all $\btau\in X$.  

The following discrete trace inequality is useful in our analysis. 
\begin{lemma}\label{TraceDG}
There exists a constant $C_{\textup{tr}}>0$ independent of $h$ and $\varrho$ such that 	\begin{equation}\label{discTrace}
  \norm*{\varrho^{\frac{1}{2}}_\cF h^{\frac{1}{2}}_{\cF}\mean{\tfrac{1}{\sqrt \varrho} \bv}}_{0,\cF^*_h}\leq C_{\textup{tr}} \norm*{\tfrac{1}{\sqrt \varrho} \bv}_{0,\Omega}\quad \forall  \bv\in \cP_k(\cT_h, \R^d). 
 \end{equation}
\end{lemma}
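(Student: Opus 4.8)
The plan is to localize the skeleton norm on the left to elementwise quantities and then to control each one-sided trace by the classical scaled (inverse) trace inequality for polynomials; the only genuine work will be bookkeeping the density weights so that the final constant does not see $\varrho$. First I would record the inverse trace inequality: for a shape-regular simplex $K$ and any $\bw\in\cP_k(K,\R^d)$,
\[
  \norm*{\bw}_{0,F}^2\le C_0\, h_K^{-1}\norm*{\bw}_{0,K}^2\qquad\forall F\in\cF(K),
\]
with $C_0$ depending only on $k$ and on the shape-regularity constant of $\{\cT_h\}_h$. Since $\varrho$ is piecewise constant on $\cT_h$, the vector field appearing inside the average is a genuine piecewise polynomial of degree $k$, so this local bound is available on every simplex without loss.

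Next I would expand the left-hand side as $\sum_{F\in\cF^*_h}\varrho_F h_F\norm*{\mean{\tfrac{1}{\sqrt\varrho}\bv}_F}_{0,F}^2$ and treat interior and Neumann faces separately. On an interior face $F=\bar K\cap\bar K'$ the average is the half-sum of the two one-sided traces, so a parallelogram-type inequality bounds the squared norm of the average by half the sum of the two squared one-sided traces, reducing the face contribution to single-element terms; on a boundary face of $\cF^N_h$ the convention makes the average one-sided, so only one term appears and $\varrho_F=\varrho_K$. Applying the inverse trace inequality to each of these traces and using $h_F\le h_K$ to absorb the mesh factor $h_F/h_K\le 1$, every face contribution is bounded by a constant multiple of the local quantities $\norm*{\tfrac{1}{\sqrt\varrho}\bv}_{0,K}^2$ (and $\norm*{\tfrac{1}{\sqrt\varrho}\bv}_{0,K'}^2$), up to the density factors discussed next.

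The crux, and the one step that requires care, is precisely the weight bookkeeping that makes $C_{\textup{tr}}$ independent of $\varrho$, and this is exactly where the choice $\varrho_F=\min\{\varrho_K,\varrho_{K'}\}$ enters. When the outer weight $\varrho_F$ is combined with the weight carried inside the average on the $K$-side and one identifies the local term $\norm*{\tfrac{1}{\sqrt\varrho}\bv}_{0,K}^2$, the leftover factor is the ratio $\varrho_F/\varrho_K$, and likewise $\varrho_F/\varrho_{K'}$ on the neighbour; both are bounded by $1$ precisely because $\varrho_F$ is the \emph{minimum} of the two neighbouring densities. Hence these ratios may simply be discarded, leaving a purely geometric constant. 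The minimum is essential here: an arithmetic or maximum average would leave behind a factor $\varrho_K/\varrho_{K'}$ that cannot be controlled uniformly in $\varrho$.

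Finally I would sum over $F\in\cF^*_h$ and interchange the order of summation. Since each simplex has $d+1$ faces, it contributes to at most $d+1$ terms of the skeleton sum, so the local bounds are counted a bounded number of times and
\[
  \sum_{F\in\cF^*_h}\varrho_F h_F\norm*{\mean{\tfrac{1}{\sqrt\varrho}\bv}_F}_{0,F}^2\ \le\ C_0(d+1)\,\norm*{\tfrac{1}{\sqrt\varrho}\bv}_{0,\Omega}^2 .
\]
Taking square roots gives the assertion with $C_{\textup{tr}}^2=C_0(d+1)$, which is manifestly independent of both $h$ and $\varrho$.
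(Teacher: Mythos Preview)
Your weight bookkeeping slips at the decisive step. With $\tfrac{1}{\sqrt{\varrho}}$ inside the average (as the lemma is literally stated), the one--sided face contribution reads
\[
\varrho_F\, h_F\,\norm*{\tfrac{1}{\sqrt{\varrho_K}}\bv_K}_{0,F}^2
=\frac{\varrho_F}{\varrho_K}\, h_F\,\norm*{\bv_K}_{0,F}^2
\le C_0\,\frac{\varrho_F}{\varrho_K}\,\norm*{\bv_K}_{0,K}^2
= C_0\,\varrho_F\,\norm*{\tfrac{1}{\sqrt{\varrho_K}}\bv_K}_{0,K}^2,
\]
so once you ``identify the local term $\norm*{\tfrac{1}{\sqrt\varrho}\bv}_{0,K}^2$'' the leftover factor is $\varrho_F$, \emph{not} $\varrho_F/\varrho_K$: the extra $\varrho_K$ needed to rewrite $\norm*{\bv_K}_{0,K}^2$ as $\varrho_K\norm*{\tfrac{1}{\sqrt\varrho}\bv}_{0,K}^2$ cancels your denominator. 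This is not repairable, because the displayed inequality with $\tfrac{1}{\sqrt\varrho}$ in the average simply fails to be uniform in $\varrho$: take $\varrho\equiv\varrho_0$ constant, so that the left--hand side becomes $\norm*{h_\cF^{1/2}\mean{\bv}}_{0,\cF^*_h}$ while the right--hand side equals $\varrho_0^{-1/2}\norm*{\bv}_{0,\Omega}$, and let $\varrho_0\to\infty$.

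What the paper actually establishes in its proof (and what is used afterwards, e.g.\ in the coercivity argument) is the version with $\tfrac{1}{\varrho}$ inside the average and $\tfrac{1}{\sqrt\varrho}$ on the right. For \emph{that} version your argument is correct verbatim: the squared inner weight then contributes $\varrho_K^{-2}$, the genuine leftover is $\varrho_F/\varrho_K\le1$ exactly as you claim, and the remaining steps (split the average, inverse trace on each side, exploit $\varrho_F=\min\{\varrho_K,\varrho_{K'}\}$, sum with the $(d{+}1)$--overlap bound) coincide with the paper's proof.
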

\begin{proof}
By definition of $\varrho_\cF$, for any $\bv \in \cP_k(\cT_h, \R^d)$, it holds
	\begin{align*}
		\norm*{\varrho^{\frac{1}{2}}_\cF h^{\frac{1}{2}}_{\cF}\mean{\tfrac{1}{\varrho} \bv}}^2_{0,\cF^*_h} = \sum_{F\in \cF^*_h} h_F \norm*{ \varrho^\frac{1}{2}_{F}\mean{\tfrac{1}{\varrho} \bv}_F }^2_{0,F} \leq \sum_{F\in \cF^*_h} h_F \norm*{ \mean{\tfrac{1}{\sqrt \varrho} \bv}_F }^2_{0,F}
		\lesssim \sum_{K\in \cT_h} h_K \norm*{ \tfrac{1}{\sqrt \varrho} \bv_K }^2_{0,\partial K}.
	\end{align*}
Applying in the last inequality the well-known estimate  (see for example \cite{DiPietroErn})
	\begin{equation}\label{Tr0}
		h_K^{\frac{1}{2}}\norm{\phi}_{0,\partial K} \leq C_{\textup{tr}}  \norm{\phi}_{0,K}\quad  \forall \phi \in \cP_k(K),
	\end{equation}
where $C_{\textup{tr}}>0$ is independent of $h$, we obtain the result.
\end{proof}

For all $\bsig,\btau \in X^{DG}(h)$ and for a large enough given parameter $\mathtt{a}>0$, we consider the symmetric bilinear form
\[
  c_h\inner*{\bsig, \btau} := c(\bsig, \btau) + \mathtt{a} \inner*{ \varrho_\cF^{-1}h_\cF^{-1}\jump{\bsig}, \jump{\btau}}_{\cF^*_h}
  - \inner*{\mean{\tfrac{1}{\varrho} \bdiv_h \bsig}, \jump{\btau} }_{\cF^*_h}
	- \inner*{\mean{\tfrac{1}{\varrho} \bdiv_h \btau}, \jump{\bsig} }_{\cF^*_h}
\]
and let
\[
  a_h\inner*{\bsig, \btau} := \inner*{\cA \bsig,\btau} +  c_h\inner*{\bsig, \btau}.
\]
For all $\bsig, \btau \in X^{DG}(h)$
satisfying  $\bdiv_h\bsig,\bdiv_h\btau\in H^s(\cT_h,\R^d)$ with $s>1/2$,
a straightforward application of the Cauchy-Schwarz inequality gives 
\begin{equation*}
\left|a_h(\bsig,\btau)\right| \leq 2 \vertiii{\bsig}_*\, \vertiii{\btau}_*.
\end{equation*}
Moreover, if we take in the last estimate $\btau= \btau_h\in X_h^{DG}$, we deduce from Lemma~\ref{TraceDG} that, 
\begin{equation}\label{boundA2}
\left|a_h(\bsig, \btau_h)\right| \leq M \,  \vertiii{\bsig}_*\, \vertiii{\btau_h},
\end{equation}
with $M := 2 \sqrt{1 + C_{\textup{tr}}^2}$.

The bilinear form $c_h(\cdot, \cdot)$ and the DG-norm $\vertiii{\cdot}$ are designed in such a way that the coercivity of the bilinear form $a_h(\cdot, \cdot)$ on $X_h^{DG}$ can be achieved with a stability parameter $\mathtt{a}$ that is independent of the material coefficients, as shown in the following result. 

\begin{prop}\label{SPD}
There exists a constant $\mathtt{a}^*>0$, independent of $\varrho$ and $\cA$, such that  
if $\mathtt{a} \geq  \mathtt{a}^*$, then
\begin{equation}\label{ste}
  c_h(\btau, \btau) \geq \frac{1}{2} \left(  \norm*{\varrho^{-\frac{1}{2}} \bdiv_h \btau}^2_{0,\Omega} +  \norm*{ \delta_\cF^{-\frac{1}{2}} h_\cF^{-\frac{1}{2}}\jump{\btau}}^2_{0,\cF^*_h}\right) ,\quad \forall \btau \in X_h^{DG}.
\end{equation}
\end{prop}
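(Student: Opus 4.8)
The plan is to prove \eqref{ste} by a direct coercivity argument: expand $c_h(\btau,\btau)$ for $\btau\in X_h^{DG}$, isolate the indefinite cross term, and absorb it into the two positive contributions using the discrete trace inequality of Lemma~\ref{TraceDG} together with Young's inequality. Since $\bdiv_h\btau\in\cP_{k-1}(\cT_h,\R^d)$ is element-wise single-valued, for $\btau\in X_h^{DG}$ the form reads
\[
c_h(\btau,\btau)=\norm*{\tfrac{1}{\sqrt\varrho}\bdiv_h\btau}^2_{0,\Omega}+\mathtt{a}\,\norm*{\varrho_\cF^{-\frac12}h_\cF^{-\frac12}\jump{\btau}}^2_{0,\cF^*_h}-2\inner*{\mean{\tfrac{1}{\varrho}\bdiv_h\btau},\jump{\btau}}_{\cF^*_h},
\]
and I would abbreviate the first two (positive) terms by $A$ and $\mathtt{a}B$ and the cross term by $-2C$, so that $c_h(\btau,\btau)=A+\mathtt{a}B-2C$.

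First I would estimate the cross term $C$. Inserting the weights $\varrho_\cF^{\frac12}h_\cF^{\frac12}$ and $\varrho_\cF^{-\frac12}h_\cF^{-\frac12}$ and applying the Cauchy--Schwarz inequality face by face and then over $\cF^*_h$ gives
\[
|C|\le\norm*{\varrho_\cF^{\frac12}h_\cF^{\frac12}\mean{\tfrac{1}{\varrho}\bdiv_h\btau}}_{0,\cF^*_h}\;\norm*{\varrho_\cF^{-\frac12}h_\cF^{-\frac12}\jump{\btau}}_{0,\cF^*_h}.
\]
The crucial step is then to apply Lemma~\ref{TraceDG} with $\bv=\bdiv_h\btau\in\cP_k(\cT_h,\R^d)$, which controls the weighted mean of the divergence by its volume norm, namely $\norm*{\varrho_\cF^{\frac12}h_\cF^{\frac12}\mean{\tfrac{1}{\varrho}\bdiv_h\btau}}_{0,\cF^*_h}\le C_{\textup{tr}}\,A^{\frac12}$, so that $|C|\le C_{\textup{tr}}\,A^{\frac12}B^{\frac12}$.

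To conclude, I would combine these bounds with Young's inequality $2C_{\textup{tr}}A^{\frac12}B^{\frac12}\le\varepsilon A+\varepsilon^{-1}C_{\textup{tr}}^2B$ and choose $\varepsilon=\tfrac12$, which yields
\[
c_h(\btau,\btau)\ge A+\mathtt{a}B-2|C|\ge\tfrac12 A+\bigl(\mathtt{a}-2C_{\textup{tr}}^2\bigr)B.
\]
Setting $\mathtt{a}^*:=2C_{\textup{tr}}^2+\tfrac12$ forces the coefficient of $B$ to be at least $\tfrac12$ whenever $\mathtt{a}\ge\mathtt{a}^*$, and \eqref{ste} follows upon recalling that $A=\norm*{\varrho^{-\frac12}\bdiv_h\btau}^2_{0,\Omega}$ and $B=\norm*{\varrho_\cF^{-\frac12}h_\cF^{-\frac12}\jump{\btau}}^2_{0,\cF^*_h}$.

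The estimate presents no genuine analytical obstacle once Lemma~\ref{TraceDG} is in hand; the only point requiring care is the coefficient-independence of $\mathtt{a}^*$. Independence of $\cA$ is immediate, since $c_h$ does not involve the compliance tensor at all, so the argument never touches $\cA$. Independence of $\varrho$ hinges entirely on the fact that the trace constant $C_{\textup{tr}}$ is itself $\varrho$-independent, a property secured by defining $\varrho_\cF$ as the face-wise minimum of the neighbouring densities; this is precisely what makes the weighted mean $\varrho_F^{\frac12}\mean{\tfrac{1}{\varrho}\bdiv_h\btau}_F$ dominated pointwise by $\mean{\tfrac{1}{\sqrt\varrho}\bdiv_h\btau}_F$ in the proof of Lemma~\ref{TraceDG}, thereby preventing any ratio of densities from entering the constant.
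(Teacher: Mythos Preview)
Your argument is correct and follows essentially the same route as the paper: expand $c_h(\btau,\btau)$, bound the cross term via Cauchy--Schwarz and the discrete trace inequality \eqref{discTrace}, then absorb it with Young's inequality at $\varepsilon=\tfrac12$, arriving at the identical threshold $\mathtt{a}^*=2C_{\textup{tr}}^2+\tfrac12$. Your closing remarks on why $\mathtt{a}^*$ is independent of $\cA$ and $\varrho$ are a welcome clarification that the paper's proof leaves implicit.
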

\begin{proof}
By definition, we have 
\begin{equation}\label{ste00}
	 c_h(\btau, \btau) = \norm*{\varrho^{-\frac{1}{2}} \bdiv_h \btau}^2_{0,\Omega} + \mathtt{a} \norm*{ \varrho_\cF^{-\frac{1}{2}} h_\cF^{-\frac{1}{2}}\jump{\btau}}^2_{0,\cF^*_h}
	 - 2 \inner*{\mean{\varrho^{-1} \bdiv_h \btau}, \jump{\btau} }_{\cF^*_h}
\end{equation}
Using the Cauchy-Schwarz inequality, Young's inequality together with the discrete trace inequality \eqref{discTrace} we obtain the estimate  
\begin{align}\label{es0}
\begin{split}
	2 \left| \inner*{ \mean{\varrho^{-1} \bdiv_h \btau}, \jump{\btau} }_{\cF_h^0} \right| &\leq  2\norm*{\gamma_\cF^{\frac{1}{2}}h_\cF^{\frac{1}{2}} \mean{\varrho^{-1} \bdiv_h \btau} }_{0,\cF^*_h} \norm*{\gamma_\cF^{-\frac{1}{2}} h_\cF^{-\frac{1}{2}} \jump{\btau} }_{0,\cF^*_h}
	\\[1ex]
&\leq 2 C_{\textup{tr}}\norm*{\varrho^{-\frac{1}{2}} \bdiv \btau}_{0,\Omega} \norm*{\gamma_\cF^{-\frac{1}{2}} h_\cF^{-\frac{1}{2}} \jump{\btau}}_{0,\cF^*_h}
\\[1ex]
&\leq \frac{1}{2} \norm*{\varrho^{-\frac{1}{2}} \bdiv \btau}^2_{0,\Omega} + 2 C_{\textup{tr}}^2 \norm*{\gamma_\cF^{-\frac{1}{2}} h_\cF^{-\frac{1}{2}} \jump{\btau}}^2_{0,\cF^*_h}. 
\end{split}
\end{align} 
Combining \eqref{es0} and \eqref{ste00} gives the result with $\mathtt{a}^*:=2 C_{\textup{tr}}^2 + \frac{1}{2}$. 
\end{proof}

\section{The pure--stress DG scheme}\label{sec4}

We are now in a position to introduce the following mixed DG discretization of \eqref{S}: Find  $0\ne \bsig_h \in X_h^{DG}$ and $\kappa_h \in \R$ such that
\begin{equation}\label{Sh}
	a_h\inner*{\bsig_h, \btau} = \kappa_h \inner*{\bsig_h,\btau}_\cA, \quad \forall \btau \in X_h^{DG}. 
\end{equation}

\begin{remark}
We are only considering via \eqref{Sh} the symmetric interior penalty DG method (SIP) because the non-symmetric DG versions, known in the literatura as NIP and IIP, have sup-optimal rates of convergence for the eigenvalues \cite{Antonietti,LM}. 	
\end{remark}

 In all what follows, we make the following stability assumption.  
 \begin{assumption}\label{a0}
The parameter $\mathtt{a}$ is greater than or equal to $\mathtt{a}^*$:  $\mathtt{a} \geq  \mathtt{a}^*:=2 C_{\textup{tr}}^2 + \frac{1}{2}$.	  
\end{assumption}
Under this assumption,  Proposition permits us to guaranty the well--posedness of the discrete source operator $\tilde T_h:H \to X_h^{DG}$ given, for any $\bF\in H$, by
\begin{equation}\label{charcTDG}
 a_h(\tilde T_h \bF, \btau_h ) = \inner*{\bF, \btau_h}_\cA \quad \forall \btau_h\in X_h^{DG}.
\end{equation}
Actually, $\tilde T_h$ is uniformly bounded, namely,   
\begin{equation}\label{bTh}
  \vertiii{\tilde T_h \bF} \leq 2 \norm{\bF}_\cA,\quad \forall \bF\in H.
\end{equation}
 Similarly to the continuous case, we observe that $(1 \neq\kappa_h,\bsig_h)\in\mathbb{R}\times X_h^{DG}$ is a solution of problem \eqref{Sh} if and only if $(\frac{1}{\kappa_h},\bsig_h)$,
 is an eigenpair of $T_h:=\tilde T_h|_{X_h^{DG}}$, i.e., $T_h\bsig_h=\frac{1}{\kappa_h}\bsig_h$. Moreover, it is clear that $\kappa_h=1$ is an eigenvalue common to \eqref{Sh} and $T_h$ with corresponding eigenspace 
 \begin{equation}\label{kerTh}
  K_h := \set*{\btau_h\in X_h^{DG};\ c_h(\btau_h, \btau_h) = 0 }. 
\end{equation}

The following result establishes a C\'ea estimate for the DG approximation \eqref{charcTDG} of \eqref{source}.
\begin{theorem}\label{cea}
Under Assumption~\ref{a0}, for all $\bF\in H$, it holds 
\begin{equation}\label{Cea}
\vertiii{ (\tilde T - \tilde T_h) \bF } \leq ( 1 + 2 M ) 
\inf_{\btau_h\in X_h^{DG}} \vertiii{\tilde T \bF - \btau_h}_*,
\end{equation}
with $M$ as in \eqref{boundA2}.
\end{theorem}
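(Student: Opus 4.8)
The plan is to follow the classical route for a symmetric coercive discretization: establish \emph{consistency} (Galerkin orthogonality), combine it with the \emph{coercivity} furnished by Proposition~\ref{SPD} and the \emph{boundedness} \eqref{boundA2}, and close with a triangle inequality. Throughout I write $\bsig := \tilde T\bF$ and $\bsig_h := \tilde T_h\bF$, so that $\bsig$ solves \eqref{source} and $\bsig_h$ solves \eqref{charcTDG}.

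\textbf{Step 1 (consistency).} The crux is to show that the exact solution satisfies the discrete problem, i.e. $a_h(\bsig,\btau_h)=\inner*{\bF,\btau_h}_\cA$ for every $\btau_h\in X_h^{DG}$. Since $\bsig\in X\subset H(\bdiv,\Omega,\bbS)$, its normal components match across interior faces and vanish on $\Gamma_N$, hence $\jump{\bsig}=0$ on $\cF_h^*$; this annihilates the penalty term and the symmetric consistency term carrying $\jump{\bsig}$ in $a_h(\cdot,\cdot)$. To treat the remaining term $-\inner*{\mean{\tfrac{1}{\varrho}\bdiv_h\bsig},\jump{\btau_h}}_{\cF_h^*}$ I would invoke the regularity \eqref{regT}, namely $\bw:=\tfrac{1}{\varrho}\bdiv\bsig\in H^1_D(\Omega,\R^d)$, together with the identity $\beps(\bw)=\cA(\tilde T-I)\bF=\cA\bsig-\cA\bF$ recorded just before \eqref{regT}. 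Starting from $\inner*{\bF,\btau_h}_\cA=\inner*{\cA\bsig,\btau_h}-\inner*{\beps(\bw),\btau_h}$, I expand $\inner*{\beps(\bw),\btau_h}$ by integrating by parts on each $K$; since $\btau_h$ is symmetric, $\beps(\bw)$ may be replaced by $\nabla\bw$, which produces $-\inner*{\bw,\bdiv_h\btau_h}$ plus the interface sum $\sum_{K}\int_{\partial K}\btau_h\bn_K\cdot\bw$. Because $\bw$ is single-valued across faces and vanishes on $\Gamma_D$, this sum collapses to $\inner*{\mean{\tfrac{1}{\varrho}\bdiv\bsig},\jump{\btau_h}}_{\cF_h^*}$. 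Substituting back reproduces exactly $a_h(\bsig,\btau_h)$, which gives consistency and, after subtracting \eqref{charcTDG}, the Galerkin orthogonality $a_h(\bsig-\bsig_h,\btau_h)=0$ for all $\btau_h\in X_h^{DG}$.

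\textbf{Step 2 (coercivity, boundedness, conclusion).} Fix an arbitrary $\btau_h\in X_h^{DG}$ and set $\be_h:=\btau_h-\bsig_h\in X_h^{DG}$. Adding $\norm{\be_h}_\cA^2$ to the estimate of Proposition~\ref{SPD} gives $a_h(\be_h,\be_h)\geq\frac12\vertiii{\be_h}^2$ under Assumption~\ref{a0}. Writing $\be_h=(\btau_h-\bsig)+(\bsig-\bsig_h)$ and using Galerkin orthogonality to kill the second contribution, I obtain $\frac12\vertiii{\be_h}^2\leq a_h(\be_h,\be_h)=a_h(\btau_h-\bsig,\be_h)$. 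The function $\btau_h-\bsig$ satisfies $\bdiv_h(\btau_h-\bsig)\in H^1(\cT_h)$ (polynomial on each element minus the piecewise-$H^1$ field $\bdiv\bsig=\varrho\,\bw$), so \eqref{boundA2} applies with $s=1$ and bounds the right-hand side by $M\,\vertiii{\btau_h-\bsig}_*\,\vertiii{\be_h}$, whence $\vertiii{\be_h}\leq 2M\,\vertiii{\bsig-\btau_h}_*$. A triangle inequality together with $\vertiii{\cdot}\leq\vertiii{\cdot}_*$ then yields $\vertiii{\bsig-\bsig_h}\leq(1+2M)\vertiii{\bsig-\btau_h}_*$, and taking the infimum over $\btau_h\in X_h^{DG}$ delivers \eqref{Cea}.

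\textbf{Main obstacle.} The delicate part is Step~1: consistency is where all the structure of the scheme enters, and one must track carefully that the jump/average bookkeeping on $\cF_h^*$ is compatible with the decomposition $\Gamma=\Gamma_D\cup\Gamma_N$ — the Dirichlet faces contributing nothing because $\bw$ vanishes there, while the Neumann faces are handled by $\jump{\bsig}=0$ — and that the strong symmetry of the discrete tensors is exactly what legitimizes replacing $\beps(\bw)$ by $\nabla\bw$ in the element-wise integration by parts. Once consistency is secured, Step~2 is routine.
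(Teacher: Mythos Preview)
Your proof is correct and follows essentially the same route as the paper's own argument: the consistency identity is obtained via the regularity \eqref{regT} of $\bw=\tfrac{1}{\varrho}\bdiv(\tilde T\bF)\in H^1_D(\Omega,\R^d)$ and elementwise integration by parts (the paper calls this field $\bu$), and the Céa estimate then follows from the coercivity of Proposition~\ref{SPD}, the boundedness \eqref{boundA2}, and the triangle inequality, exactly as you lay out. Your explicit remark that the strong symmetry of $\btau_h$ is what allows replacing $\beps(\bw)$ by $\nabla\bw$ in the elementwise Green formula is a nice clarification that the paper leaves implicit.
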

\begin{proof}
We already know from \eqref{regT} that $\bu := \frac{1}{\varrho} \bdiv (\tilde T\bF)\in H_D^1(\Omega,\R^d)$. Hence, using the integration by parts \eqref{Gf} elementwise  in
	\[
  a(\tilde T \bF, \btau) = \inner*{\bF, \btau}_\cA,\quad \forall \btau \in X
\]
 gives  
\begin{align*}
	\inner*{\tilde T\bF - \bF, \btau}_\cA =   \inner*{\beps(\bu), \btau} 
    =  -\inner*{\bu, \bdiv_h\btau } + \inner*{\mean{\bu}, \jump{\btau} }_{\cF^*_h} \quad   \forall \btau\in X_h^{DG}.
\end{align*}
Substituting back $\bu = \frac{1}{\varrho} \bdiv (\tilde T\bF)$ into the last expression we get
\[
  \inner*{\tilde T\bF , \btau}_\cA + \inner*{\tfrac{1}{\varrho} \bdiv (\tilde T\bF), \bdiv_h\btau }
  - \inner*{\mean{\tfrac{1}{\varrho} \bdiv (\tilde T\bF)}, \jump{\btau} }_{\cF^*_h} = \inner*{\bF, \btau}_\cA\quad   \forall \btau\in X_h^{DG}.
\]
Combining the last identity with \eqref{charcTDG} yields the following consistency property \begin{equation}\label{consistency}
a_h((\tilde T - \tilde T_h)\bF, \btau_h) = 0 \quad \forall \btau_h\in X_h^{DG},\quad \forall \bF\in H.
\end{equation}
Now, by virtue of \eqref{boundA2}, \eqref{ste} and \eqref{consistency}, it holds  
\begin{align*}
	\tfrac{1}{2} \vertiii{\tilde T_h \bF - \btau_h}^2 &\leq a_h(\tilde T_h \bF - \btau_h, \tilde T_h \bF - \btau_h) = a_h(\tilde T \bF - \btau_h, \tilde T_h \bF - \btau_h)
	\\[1ex]
	& \leq M\, \vertiii{\tilde T \bF - \btau_h}_* \vertiii{\tilde T_h \bF - \btau_h} \quad \forall \btau_h\in X_h^{DG},
\end{align*}
and the result follows from the triangle inequality. 

\end{proof}

\subsection{The operator $\cJ_h^s$}

For technical reasons, we want to consider here the $H(\bdiv,\Omega,\bbS)$-conforming finite element space given by $X_h^c:= \cP_h(\cT_h, \bbS)\cap X$. The goal of this section is to prove that, under certain conditions on the mesh and on the polynomial degree $k$, it holds
\begin{equation}\label{approx}
	   \inf_{\btau_h\in X_h^c} \norm*{\bsig - \btau_h}_X \longrightarrow 0, \quad \text{when $h\to 0$},\quad \forall \bsig \in X.
\end{equation}

The main obstacle in performing this task is the symmetry constraint. Let us ignore this constraint and discuss, in a first step, approximation properties of the Brezzi-Douglas-Marini (BDM) mixed finite element discretization of
\[
  H_N(\bdiv,\Omega,\bbM) := \set*{\btau\in H(\bdiv, \Omega, \bbM); \quad 
	\dual*{\btau\bn,\boldsymbol{\phi}}_{\Gamma}= 0 
	\quad \text{$\forall\boldsymbol{\phi} \in H^{1/2}(\Gamma,\R^d)$,\, $\boldsymbol{\phi}|_{\Gamma_D} = \mathbf{0}$}}.
\] 
Given $s>1/2$ the tensorial version of the canonical BDM finite element interpolant  $\Pi^{\texttt{BDM}}_h: H_N(\sdiv,\Omega,\bbM)\cap H^s(\cup_j\Omega, \bbM)\to H_N(\sdiv,\Omega,\bbM)\cap \cP_k(\cT_h,\bbM)$ satisfies the following classical error estimate, \cite[Proposition 2.5.4]{BoffiBrezziFortinBook},
\begin{equation}\label{asymp}
 \norm*{\btau - \Pi^{\texttt{BDM}}_h \btau}_{0,\Omega} \leq C h^{\min\{s, k+1\}} \sum_{j=1}^J \norm*{\btau}_{s,\Omega_j} \qquad \forall \btau \in H_N(\sdiv,\Omega,\bbM)\cap H^s(\cup_j\Omega, \bbM), \quad s>1/2,
\end{equation}
Moreover, we have the well-known commutativity property, 
\begin{equation}\label{commuting}
	\bdiv \Pi^{\texttt{BDM}}_h \btau = Q^{k-1}_h \bdiv \btau,\quad \forall \btau \in H_N(\sdiv,\Omega,\bbM)\cap H^s(\cup_j\Omega, \bbM), \quad s>1/2, 
\end{equation}
where $Q^{k-1}_h$ stands for the $L^2(\Omega, \R^d)$-orthogonal projection onto $\cP_{k-1}(\cT_h, \R^d)$. Therefore, if $\bdiv \btau \in H^s(\cup_j\Omega,\R^d)$, we obtain
\begin{equation}\label{asympDiv}
 \norm{\bdiv (\btau - \Pi^{\texttt{BDM}}_h \btau) }_{0,\Omega} = \norm{\bdiv \btau - Q^{k-1}_h \bdiv \btau }_{0,\Omega} 
 \leq C h^{\min\{s, k\}} \sum_{j=1}^J \norm{\bdiv\btau}_{s,\Omega_j}.
\end{equation}

We point out that one can actually extend  the domain of the canonical interpolation operator $\Pi^{\texttt{BDM}}_h$ to  $H_N(\bdiv,\Omega,\bbM)\cap H^s(\Omega,\bbM)$, for any $s>0$. In the case of a constant function $\varrho$ and a constant tensor $\cA$, classical regularity results \cite{dauge, grisvard} ensure the existence of $\hat s\in(0,1]$ (depending on $\Omega$ on the boundary conditions and on the Lam\'e coefficients) such that the solution $\widetilde{\bu}$ of problem \eqref{korn} belongs to $H^{1+s}(\Omega, \R^d)\cap H^1_D(\Omega, \R^d)$ for all $s\in (0, \hat s)$.   However, our aim here is to avoid relying on regularity results that may be difficult to establish for the elasticity system in the case of general domains, boundary conditions and material properties. For this reason, we resort to the following smoothed projector recently introduced by Licht \cite[Theorem 6.3]{licht}. 
\begin{theorem}\label{ern}
	There exists a bounded and linear operator $\mathcal J_h:\, L^2( \Omega, \bbM) \to H_N(\bdiv, \Omega, \bbM)\cap \cP_k(\cT_h, \bbM)$ such that 
	\begin{enumerate}[label=\roman*)]
		\item The exists $C>0$ independent of $h$ such that 
		\[
  \norm*{\bsig - \mathcal J_h \bsig}_{0,\Omega}\leq C \displaystyle\inf_{\btau_h\in H_N(\bdiv, \Omega, \bbM)\cap \cP_k(\cT_h, \bbM)} \norm*{\bsig - \btau_h}_{0,\Omega},\quad  \forall \bsig\in L^2( \Omega, \bbM)
  \]
\item $\bdiv \mathcal J_h \bsig = Q_h^{k-1} \bdiv \bsig$ for all $\bsig\in H_N(\bdiv, \Omega, \bbM)$.  
	\end{enumerate}
\end{theorem}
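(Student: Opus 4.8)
Since the result is due to Licht \cite[Theorem 6.3]{licht}, I only outline the route one would follow. The reason one cannot simply take $\cJ_h=\Pi_h^{\texttt{BDM}}$ is that the canonical interpolant of \eqref{asymp}--\eqref{commuting} is defined only on $H^s$ with $s>1/2$. The plan is therefore to precompose $\Pi_h^{\texttt{BDM}}$ with a smoothing operator that regularizes an arbitrary $L^2$ field while preserving both membership in $H_N(\bdiv,\Omega,\bbM)$ and the commutativity \eqref{commuting}, and then to correct the resulting quasi-interpolant into a genuine projection. This is the smoothed-projection strategy of finite element exterior calculus, Licht's specific contribution being to carry it out on a general Lipschitz polyhedron split by the mixed partition $\Gamma=\Gamma_D\cup\Gamma_N$.

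\textbf{Step 1 (boundary-adapted smoothing).} First I would construct a family of operators $R_h^\delta:\,L^2(\Omega,\bbM)\to H^s(\Omega,\bbM)\cap H_N(\bdiv,\Omega,\bbM)$, $s>1/2$, by mollification with a mesh-dependent radius $\delta\simeq h$. The construction is local: one pulls back to a reference patch, convolves against a compactly supported kernel, and transports back, tilting the mollification direction inward near $\Gamma$ so that the essential condition $\btau\bn|_{\Gamma_D}=\mathbf 0$ is retained. The two properties to be secured are uniform $L^2$-stability of $R_h^\delta$ and its commutation with the divergence, $\bdiv R_h^\delta=R_h^{\delta,\mathrm{vec}}\bdiv$, for a companion mollifier $R_h^{\delta,\mathrm{vec}}$ acting on the divergence space.

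\textbf{Step 2 (commuting quasi-interpolant).} Setting $\mathcal{K}_h:=\Pi_h^{\texttt{BDM}}R_h^\delta$ produces an operator defined on all of $L^2(\Omega,\bbM)$. Combining \eqref{commuting} with the commutation of the smoothing step places $\mathcal{K}_h$ in a commuting diagram $\bdiv\mathcal{K}_h=\mathcal{K}_h^{\mathrm{vec}}\bdiv$ on $H_N(\bdiv,\Omega,\bbM)$, with $\mathcal{K}_h^{\mathrm{vec}}$ a bounded operator on the divergence space; uniform $L^2$-boundedness of $\mathcal{K}_h$ follows from the stability of $R_h^\delta$ and that of $\Pi_h^{\texttt{BDM}}$ on its now $H^s$-regular argument.

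\textbf{Step 3 (projection correction).} The operator $\mathcal{K}_h$ is bounded and commuting but not idempotent. An inverse estimate controlling the mollification error on the polynomial space $V_h:=H_N(\bdiv,\Omega,\bbM)\cap\cP_k(\cT_h,\bbM)$ shows that $\mathcal{K}_h|_{V_h}=I+E_h$ with $\norm*{E_h}<1$ uniformly for $h$ small, so $\mathcal{K}_h|_{V_h}$ is invertible with uniformly bounded inverse. Then
\[
  \cJ_h:=\bigl(\mathcal{K}_h|_{V_h}\bigr)^{-1}\mathcal{K}_h
\]
is a uniformly bounded projection onto $V_h$. The correction respects the grading of the discrete complex, so $\cJ_h$ again commutes with $\bdiv$; arranging the companion smoother so that the induced projection onto $\cP_{k-1}(\cT_h,\R^d)$ is the $L^2$-orthogonal one gives exactly property (ii), $\bdiv\cJ_h\bsig=Q_h^{k-1}\bdiv\bsig$. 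Property (i) is then the standard consequence of $\cJ_h$ being a uniformly bounded projection onto $V_h$:
\[
  \norm*{\bsig-\cJ_h\bsig}_{0,\Omega}=\norm*{(I-\cJ_h)(\bsig-\btau_h)}_{0,\Omega}\le\bigl(1+\norm*{\cJ_h}\bigr)\,\norm*{\bsig-\btau_h}_{0,\Omega}\qquad\forall\btau_h\in V_h.
\]

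\textbf{Main obstacle.} The delicate step is the first one: building $R_h^\delta$ so that it simultaneously preserves the vanishing normal trace on $\Gamma_D$, commutes with $\bdiv$, and is uniformly $L^2$-stable, with no smoothness of $\partial\Omega$ beyond Lipschitz regularity and across the $\Gamma_D/\Gamma_N$ interface. This boundary-adapted mollification, rather than the algebraic correction of Step~3, is the technical heart of \cite[Theorem 6.3]{licht}.
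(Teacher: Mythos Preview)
The paper does not prove this theorem; it quotes the result verbatim from Licht \cite[Theorem~6.3]{licht}, so there is no in-paper argument to compare against. Your sketch is a faithful outline of the smoothed-projection machinery behind that reference (boundary-adapted mollification, composition with the canonical BDM interpolant, Sch\"oberl-type inversion on the discrete space), and your identification of Step~1 as the technical heart is accurate.

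Two points to tighten. First, in Step~1 the essential normal-trace condition built into $H_N(\bdiv,\Omega,\bbM)$ is $\btau\bn=\mathbf 0$ on $\Gamma_N$, not on $\Gamma_D$: test the defining duality in the paper against $\boldsymbol{\phi}$ vanishing on $\Gamma_D$ but arbitrary on $\Gamma_N$. Second, in Step~3 the companion projection on the divergence side that falls out of the Sch\"oberl correction is a uniformly bounded projection onto $\cP_{k-1}(\cT_h,\R^d)$, but it is not automatically the $L^2$-orthogonal projection $Q_h^{k-1}$. Getting exactly $Q_h^{k-1}$ relies on the fact that at the top of the complex the canonical interpolant is already $Q_h^{k-1}$ (only cell moments, no face degrees of freedom), so the commuting square can be built with $Q_h^{k-1}$ on the range side from the outset; this is true and is how Licht proceeds, but it is a genuine step rather than a free byproduct of ``arranging the companion smoother.''
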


The operator $\mathcal{J}_h$ doesn't preserve symmetric. To remedy this drawback, we follow \cite{falk, guzman, wang} and use a symmetrisation procedure that requires the stability the Scott-Vogelius element \cite{vogelius} for the Stokes problem. We refer to \cite[Section 55.3]{ern1} for a detailed account on the conditions (on the mesh $\cT_h$ and $k$) under which this stability property is guaranteed in 2D and 3D. The analysis that follows from now on is based on the following assumption. 

\begin{assumption}\label{hyp}
The pair $\set*{\cP_{k+1}(\cT_h,\R^d)\cap H_N^1(\Omega,\R^d),  \cP_{k}(\cT_h)}$ is stable for the Stokes problem on the mesh $\cT_h$: there exists $\beta>0$ independent of $h$ such that 
\begin{equation}\label{infsupS}
\sup_{\bv_h\in \cP_{k+1}(\cT_h,\R^d)\cap H_N^1(\Omega,\R^d)} \frac{(\sdiv \bv_h, \phi_h )}{\norm{\bv_h}_{1,\Omega}} \geq \beta \norm{\phi_h}_{0,\Omega}\quad \forall \phi_h\in \cP_{k}(\cT_h).	
\end{equation}
	  
\end{assumption}

\begin{lemma}\label{stokes}
Under Assumption~\ref{hyp}, there exists a linear operator
\[
  \mathcal S_h:\, \cP_k(\cT_h,\bbM)\cap H_N(\bdiv,\Omega,\bbM)\to \cP_{k}(\cT_h,\bbS)\cap X
  \]
such that, for all $\btau_h \in \cP_k(\cT_h,\bbM)\cap H_N(\bdiv,\Omega,\bbM)$,
	\begin{enumerate}[label=\roman*)]
		\item $\bdiv ( \btau_h - \mathcal S_h \btau_h) = \mathbf 0$ in $\Omega$, 
		\item  and 
		$
  \norm*{\btau_h - \mathcal S_h \btau_h}_{0,\Omega}\leq C \norm*{\btau_h - \btau_h^{\mt}}_{0,\Omega}, \quad \text{with $C>0$ independent of $h$}.
  $  
	\end{enumerate}
\end{lemma}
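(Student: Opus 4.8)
My plan is to define $\cS_h\btau_h:=\btau_h-\bchi_h$, where $\bchi_h\in\cP_k(\cT_h,\bbM)\cap H_N(\bdiv,\Omega,\bbM)$ is a divergence-free correction, depending linearly on $\btau_h$, whose skew part reproduces that of $\btau_h$, i.e.\ $\skw\bchi_h=\skw\btau_h$, and which satisfies $\norm{\bchi_h}_{0,\Omega}\leq C\norm{\btau_h-\btau_h^{\mt}}_{0,\Omega}$. Granting such a $\bchi_h$, all three claims follow at once: $\cS_h\btau_h$ is symmetric since $\skw(\btau_h-\bchi_h)=\mathbf 0$, and, recalling that $X$ is precisely the set of symmetric tensors of $H_N(\bdiv,\Omega,\bbM)$, it lands in $\cP_k(\cT_h,\bbS)\cap X$; property (i) holds because $\bdiv(\btau_h-\cS_h\btau_h)=\bdiv\bchi_h=\mathbf 0$; and property (ii) is the stated bound together with $\btau_h-\btau_h^{\mt}=2\skw\btau_h$. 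Everything therefore reduces to constructing $\bchi_h$.

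The first ingredient I would extract from Assumption~\ref{hyp} is a bounded right inverse of the divergence. Since the inf-sup inequality \eqref{infsupS} holds against every $\phi_h\in\cP_k(\cT_h)$, the operator $\sdiv:\cP_{k+1}(\cT_h,\R^d)\cap H_N^1(\Omega,\R^d)\to\cP_k(\cT_h)$ is surjective and admits a linear right inverse $R_h$ with $\sdiv R_h\phi_h=\phi_h$ and $\norm{R_h\phi_h}_{1,\Omega}\leq\beta^{-1}\norm{\phi_h}_{0,\Omega}$. The decisive feature is that $R_h\phi_h$ vanishes on $\Gamma_N$.

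Next I would realize $\bchi_h$ as an $H(\bdiv)$-conforming curl of a potential supplied by $R_h$; this is transparent when $d=2$. Here $\skw\btau_h$ is encoded by the single scalar $q_h:=\tau_{21}-\tau_{12}\in\cP_k(\cT_h)$, with $\norm{q_h}_{0,\Omega}\lesssim\norm{\btau_h-\btau_h^{\mt}}_{0,\Omega}$. Setting $\boldsymbol{\phi}_h:=R_h q_h$ and defining $\bchi_h$ through its rows $\big(\partial_2(\phi_h)_i,\,-\partial_1(\phi_h)_i\big)$, $i=1,2$, produces a field in $\cP_k(\cT_h,\bbM)$: each row is a rotated gradient of the $H^1$ field $\boldsymbol{\phi}_h$, hence $L^2$ with vanishing distributional divergence, so $\bchi_h$ is $H(\bdiv)$-conforming with $\bdiv\bchi_h=\mathbf 0$. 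A one-line computation yields $(\bchi_h)_{21}-(\bchi_h)_{12}=\sdiv\boldsymbol{\phi}_h=q_h$, that is $\skw\bchi_h=\skw\btau_h$, while $\norm{\bchi_h}_{0,\Omega}\leq|\boldsymbol{\phi}_h|_{1,\Omega}\leq\beta^{-1}\norm{q_h}_{0,\Omega}$. Finally, the normal trace of $\bchi_h$ on a boundary face equals the tangential derivative of $\boldsymbol{\phi}_h$ along that face, which vanishes on $\Gamma_N$ because $\boldsymbol{\phi}_h$ does; hence $\bchi_h\in H_N(\bdiv,\Omega,\bbM)$, as required.

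For $d=3$ I would run the same scheme with a matrix-valued potential $\boldsymbol{\Phi}_h$ and the row-wise matrix curl $\bchi_h:=\mathbf{curl}\,\boldsymbol{\Phi}_h$, which is again divergence-free; the entries of $\boldsymbol{\Phi}_h$ are obtained by applying $R_h$ to suitable scalar components of $\skw\btau_h$, so that the vector proxy of $\skw\bchi_h$ matches that of $\btau_h$, while the $H_N^1$-boundary conditions once more force a homogeneous normal trace on $\Gamma_N$; I would take the precise symmetrisation algebra from \cite{falk,guzman,wang}. The main obstacle is exactly this three-dimensional step: one must identify the first-order, divergence-type operator that expresses the skew part of a matrix curl and then choose $\boldsymbol{\Phi}_h$—via a componentwise use of $R_h$—so that the skew part is matched, the divergence stays zero, and the homogeneous normal trace on $\Gamma_N$ is preserved all at once. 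The scalar Stokes stability of Assumption~\ref{hyp} is what renders each underlying divergence equation solvable with an $h$-independent bound, which is where the Scott--Vogelius restrictions on $\cT_h$ and $k$ enter.
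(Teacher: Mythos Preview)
Your proposal is correct and takes essentially the same approach as the paper: solve a discrete divergence equation via the Scott--Vogelius inf-sup with the skew part of $\btau_h$ as datum, apply a rotated gradient (in 2D) or a matrix curl (in 3D) to obtain a divergence-free symmetrising correction, and verify that the $H^1_N$ boundary condition on the potential forces a vanishing normal trace on $\Gamma_N$. In two dimensions your $\bchi_h$ is exactly $-\nabla^\bot\bw_h$ in the paper's notation; for $d=3$ the paper makes the potential explicit as $\bw_h=\mathbf{z}_h^{\mt}-(\tr\mathbf{z}_h)I$, where $\bdiv\mathbf{z}_h$ is the vector proxy of $\skw\btau_h$, which is precisely the ``symmetrisation algebra'' you defer to \cite{falk,guzman,wang}.
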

\begin{proof}
We only sketch the proof given in \cite[Lemma 5.2]{wang} and adapt it to our boundary conditions, see also \cite{falk, guzman}. In the case $d=2$, given $\btau_h\in \cP_k(\cT_h,\bbM)\cap H_N(\bdiv,\Omega,\bbM)$, it follows from Assumption~\ref{hyp} that there exists  $\bw_h\in H^1_N(\Omega, \R^d)\cap \cP_{k+1}(\cT_h,\R^d)$ satisfying $\sdiv \bw_h = \tau_{h,21} - \tau_{h,12}$ and 
\begin{equation}\label{lom}
	\norm{\bw_h}_{1,\Omega} \lesssim \norm{\tau_{h,12} - \tau_{h,21}}_{0,\Omega} \lesssim \norm{\btau_{h} - \btau_{h}^{\mt}}_{0,\Omega}.
\end{equation}
We recall that all differential operators are applied row-wise and let  $\cS_h\btau_h := \btau_h + \nabla^\bot \, \bw_h$, where $\nabla^\bot:= (-\partial_2 , \partial_1 )^\mt$ is the rotated gradient. By construction,  $\bdiv (\btau_h - \cS_h\btau_h) = \mathbf 0$ and thanks to \eqref{lom} it holds $\norm{\btau_h - \cS_h\btau_h}_{0,\Omega} \lesssim \norm{\btau_{h} - \btau_{h}^{\mt}}_{0,\Omega}$. Moreover, it is easy to check that $\cS_h\btau_h = (\cS_h\btau_h)^\mt$. It remains to show that $\cS_h$ preserves the boundary condition on $\Gamma_N$. This follows from the fact that $\nabla^\bot\bw_h\bn  =  (\partial_\tau w_{h,1}, \partial_\tau w_{h,2})^\mt$ and the tangential derivatives $\partial_\tau w_{h,j}:= \partial_1 w_{h,j} n_2  - \partial_2 w_{h,j} n_1$, $j=1,2$, vanish on $\Gamma_N$ since $\bw_h\in H^1_N(\Omega,\R^d)$. This finishes the proof of the result in the two dimensional case. 

In the case $d=3$, we let $\cS_h\btau_h := \btau_h + \nabla \times \bw_h$, with $\bw_h = \mathbf{z}_h^\mt - (\tr{\mathbf{z_h}})I$, where the tensor $\mathbf{z}_h\in H^1_N(\Omega, \bbM)\cap \cP_{k+1}(\cT_h,\bbM)$  satisfies $\bdiv \mathbf{z}_h = (\btau_{h,23} - \btau_{h,32}, \btau_{h,31} - \btau_{h,13}, \btau_{h,12} - \btau_{h,21} )^\mt$ and $\norm{\mathbf{z}_h}_{1,\Omega}  \lesssim \norm{\btau_{h} - \btau_{h}^{\mt}}_{0,\Omega}$. The existence of $\mathbf{z}_h$  is ensured by Assumption~\ref{hyp}.  In this way, we also have $\bdiv (\btau_h - \cS_h\btau_h) = \mathbf 0$ and $\norm{\btau_h - \cS_h\btau_h}_{0,\Omega} \lesssim \norm{\btau_{h} - \btau_{h}^{\mt}}_{0,\Omega}$. The proof of the symmetry property $\cS_h\btau_h = (\cS_h\btau_h)^\mt$ is a little more involved in this case, as shown in \cite[Lemma 5.2]{wang}. Finally, we point out that $(\nabla \times \bw_h)\bn  = (\sdiv_\Gamma (\bw_h^1\times \bn), \sdiv_\Gamma (\bw_h^2\times \bn), \sdiv_\Gamma (\bw_h^3\times \bn))^\mt $ on $\Gamma$, where $\bw_h^j$, $j=1,2,3$ stand for the rows of $\bw_h$ and $\sdiv_\Gamma$ represents the divergence operator on the surface $\Gamma$. Taking into account that, $\bw_h |_{\Gamma_N} = \mathbf 0$, we deduce that $\cS_h\btau_h$ belongs to $\cP_{k}(\cT_h,\bbS)\cap X$ for all $\btau_h\in \cP_k(\cT_h,\bbM)\cap H_N(\bdiv,\Omega,\bbM)$, which finishes the proof of the result in the tree-dimensional case.  
\end{proof}

We are able to state now the counterpart of Theorem~\ref{ern} for $\cJ^s_h:= \cS_h\circ \cJ_h:\, L^2(\Omega, \bbM)\to X_h^c$.  

\begin{corollary}\label{inter}
Under Assumption~\ref{hyp},  $\mathcal J^s_h:\, L^2( \Omega, \bbS) \to  X_h^c$ satisfies 
	\begin{enumerate}[label=\roman*)]
		\item $
  \norm*{\bsig - \mathcal J^s_h \bsig}_{0,\Omega}\leq C \displaystyle\inf_{\btau_h\in H_N(\bdiv, \Omega, \bbM)\cap \cP_k(\cT_h, \bbM)} \norm*{\bsig - \btau_h}_{0,\Omega},\quad  \forall \bsig\in L^2( \Omega, \bbS)
  $
  
  with $C$ independent of $h$,
\item and $\bdiv \mathcal J^s_h \bsig = Q_h^{k-1} \bdiv \bsig$ for all $\bsig\in X$.  
	\end{enumerate}
\end{corollary}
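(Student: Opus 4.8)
The plan is to verify both properties by combining the error estimate and commutativity of the Licht operator $\cJ_h$ (Theorem~\ref{ern}) with the two properties of the symmetrisation operator $\cS_h$ (Lemma~\ref{stokes}). Since $\cJ_h$ maps $L^2(\Omega,\bbM)$ into $\cP_k(\cT_h,\bbM)\cap H_N(\bdiv,\Omega,\bbM)$, the composition $\cJ^s_h = \cS_h\circ \cJ_h$ is well defined, and by Lemma~\ref{stokes} its range lies in $X_h^c = \cP_k(\cT_h,\bbS)\cap X$, as required.

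For property ii), I would start from the observation that any $\bsig\in X$ belongs to $H_N(\bdiv,\Omega,\bbS)\subset H_N(\bdiv,\Omega,\bbM)$, so Theorem~\ref{ern}ii) gives $\bdiv\cJ_h\bsig = Q_h^{k-1}\bdiv\bsig$. Lemma~\ref{stokes}i) asserts that $\cS_h$ does not alter the divergence, namely $\bdiv(\cJ_h\bsig - \cS_h\cJ_h\bsig) = \mathbf 0$, whence $\bdiv\cJ^s_h\bsig = \bdiv\cJ_h\bsig = Q_h^{k-1}\bdiv\bsig$. This settles ii) immediately.

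For property i), I would use the triangle inequality to split
\[
\norm*{\bsig - \cJ^s_h\bsig}_{0,\Omega} \leq \norm*{\bsig - \cJ_h\bsig}_{0,\Omega} + \norm*{\cJ_h\bsig - \cS_h\cJ_h\bsig}_{0,\Omega}.
\]
The first term is controlled directly by Theorem~\ref{ern}i). The second is bounded by Lemma~\ref{stokes}ii) in terms of the antisymmetric part $\norm*{\cJ_h\bsig - (\cJ_h\bsig)^\mt}_{0,\Omega}$. The crucial point is to exploit the symmetry of the argument: since $\bsig = \bsig^\mt$, one may write $\cJ_h\bsig - (\cJ_h\bsig)^\mt = (\cJ_h\bsig - \bsig) - (\cJ_h\bsig - \bsig)^\mt$, and, using the transpose-invariance of the Frobenius norm, estimate this quantity by $2\norm*{\bsig - \cJ_h\bsig}_{0,\Omega}$. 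Hence the skew part of $\cJ_h\bsig$ is itself controlled by the best approximation error, and collecting the two bounds yields i).

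The argument is essentially routine once the right quantities are identified; the only genuinely delicate point — and what I would flag as the main obstacle — is recognising that the symmetrisation error produced by $\cS_h$ can be absorbed into the best-approximation term precisely because the target $\bsig$ is symmetric, so that the antisymmetric part of $\cJ_h\bsig$ is entirely approximation error rather than an independent contribution. This is exactly why the hypothesis $\bsig\in L^2(\Omega,\bbS)$, and not merely $\bsig\in L^2(\Omega,\bbM)$, is essential for i).
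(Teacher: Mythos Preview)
Your proposal is correct and follows essentially the same route as the paper: both prove ii) by combining Theorem~\ref{ern}ii) with the divergence-preserving property of $\cS_h$ from Lemma~\ref{stokes}i), and both prove i) via the triangle inequality, Lemma~\ref{stokes}ii), and the observation that $\cJ_h\bsig - (\cJ_h\bsig)^{\mt} = (\bsig - \cJ_h\bsig)^{\mt} - (\bsig - \cJ_h\bsig)$ when $\bsig$ is symmetric, reducing everything to Theorem~\ref{ern}i). Your identification of the symmetry hypothesis on $\bsig$ as the crucial point is exactly right.
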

\begin{proof}
The commuting property for $\cJ^s$ follows from the corresponding property for $\cJ_h$ and from  the fact that $\cS_h$ preserves the divergence of tensors, as stated in Lemma~\ref{stokes} i). 
  In addition, as a consequence the property given by Lemma~\ref{stokes} ii), for any $\bsig\in L^2(\Omega, \bbS)$, it holds  
\begin{align}\label{hu1}
\begin{split}
	\norm*{\bsig - \cJ^s_h \bsig }_{0,\Omega} &\leq \norm*{\bsig - \cJ_h \bsig }_{0,\Omega} + \norm*{\cJ_h \bsig - \mathcal S_h(\cJ_h \bsig )}_{0,\Omega}
	\\[1ex]
	& \lesssim \norm*{\bsig - \cJ_h \bsig }_{0,\Omega} + \norm*{\bsig - \cJ_h \bsig - (\bsig - \cJ_h \bsig)^\mt}_{0,\Omega}\lesssim \norm*{\bsig - \cJ_h \bsig }_{0,\Omega},
	\end{split}
\end{align}
and the first statement of the Corollary follows from Theorem~\ref{ern} i). 
\end{proof}

\begin{remark}\label{rem1}
Using the density of smooth functions in $H_N(\bdiv,\Omega, \bbM)$ \cite[Lemma 1.2]{licht}  and the interpolation error estimates satisfied by the BDM projector, we deduce from Corollary~\ref{inter} that, for all $\bsig\in X$, 
\[
  \inf_{\btau_h\in X_h^c} \norm*{\bsig - \btau_h}_X\leq  \norm*{\bsig - \cJ_h^s\bsig}_X \lesssim \inf_{\btau_h\in H_N(\bdiv, \Omega, \bbM)\cap \cP_k(\cT_h, \bbM) } \norm*{\bsig - \btau_h}_{0,\Omega} + \norm{\bdiv\bsig - Q_h^{k-1} \bdiv\bsig}_{0,\Omega} \to 0 
\]
when $h$ goes to zero,  which proves \eqref{approx}.  
\end{remark}

\subsection{The operator $P_h$}
In what follows, the norm of a linear and continuous operator $L:\, V_1 \to V_2$ between two Hilbert spaces $V_1$ and $V_2$ is denoted $\norm{L}_{\mathcal L(V_1, V_2)} := \sup_{v\in V_1, \norm{v}_{V_1}=1}\norm{Lv}_{V_2}$.

It is crucial to notice that (under Assumption~\ref{a0}) Proposition~\ref{SPD} provides the following equivalent characterization of $K_h$   
\[
K_h := \set*{\btau_h\in X_h^c;\ \bdiv \btau_h = 0 \ \text{in $\Omega$}} \subset K,
\] 
and that its $a(\cdot, \cdot)$-orthogonal complement $K_h^\bot := \set*{\bsig_h\in X_h^c;\ \inner*{ \bsig_h, \btau_h}_\cA = 0,\ \forall \btau_h \in K_h}$ is not a subset of $K^\bot= P(X)$. Let $P_h:X_h^c \to K_h^\bot$ be the $X$-orthogonal projection in $X_h^c$ onto $K_h^\bot$. The following result provides an estimate for the operator $(P- P_h)|_{X^c_h}$.

 
 \begin{lemma}\label{p_h}
 Under Assumptions \ref{a0} and \ref{hyp}, it holds
 	\[
  \norm*{P - P_h}_{\cL(X^c_h, X)} \leq 2 \norm*{(I - \cJ^s_h)P}_{\cL(X, H)}.
\]
 \end{lemma}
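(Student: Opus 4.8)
The plan is to fix an arbitrary $\bsig_h\in X_h^c$, insert the intermediate element $\cJ_h^s P\bsig_h\in X_h^c$, and use the triangle inequality
\[
\norm*{(P - P_h)\bsig_h}_X \leq \norm*{P\bsig_h - \cJ_h^s P\bsig_h}_X + \norm*{\cJ_h^s P\bsig_h - P_h\bsig_h}_X .
\]
Everything rests on a single divergence computation. By Lemma~\ref{P} one has $\bdiv P\bsig_h = \bdiv\bsig_h$, and since $\bsig_h\in\cP_k(\cT_h,\bbS)\cap X$ its divergence already belongs to $\cP_{k-1}(\cT_h,\R^d)$; hence the commuting property of Corollary~\ref{inter} ii) gives $\bdiv\cJ_h^s P\bsig_h = Q_h^{k-1}\bdiv P\bsig_h = \bdiv\bsig_h$. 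Moreover $\bdiv P_h\bsig_h = \bdiv\bsig_h$, because $(I - P_h)\bsig_h\in K_h$ is divergence free. Consequently both differences on the right-hand side are divergence free, so on each of them the $X$-norm reduces to the $\cA$-norm (equivalently, the $H$-norm), since $\norm*{\btau}_X^2 = \norm*{\btau}_\cA^2 + c(\btau,\btau)$ and the $c$-contribution vanishes.

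For the first term this observation immediately yields
\[
\norm*{P\bsig_h - \cJ_h^s P\bsig_h}_X = \norm*{(I - \cJ_h^s)P\bsig_h}_\cA \leq \norm*{(I - \cJ_h^s)P}_{\cL(X,H)}\,\norm*{\bsig_h}_X .
\]

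For the second term I would set $\bet_h := \cJ_h^s P\bsig_h - P_h\bsig_h$, which by the divergence computation lies in $K_h$. I then expand $\norm*{\bet_h}_\cA^2 = \inner*{\bet_h,\bet_h}_\cA$ using two orthogonalities: $\inner*{P_h\bsig_h,\bet_h}_\cA = 0$, because $P_h\bsig_h\in K_h^\bot$ and $\bet_h\in K_h$; and $\inner*{P\bsig_h,\bet_h}_\cA = 0$, because $P\bsig_h\in K^\bot$ while $\bet_h\in K_h\subset K$. These collapse the expression to $\norm*{\bet_h}_\cA^2 = \inner*{\cJ_h^s P\bsig_h - P\bsig_h,\bet_h}_\cA$, so Cauchy--Schwarz together with $\norm*{\bet_h}_X = \norm*{\bet_h}_\cA$ gives $\norm*{\bet_h}_X \leq \norm*{(I - \cJ_h^s)P}_{\cL(X,H)}\,\norm*{\bsig_h}_X$. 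Summing the two bounds and taking the supremum over $\norm*{\bsig_h}_X = 1$ produces the factor $2$ and the stated estimate.

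The divergence bookkeeping and the two Cauchy--Schwarz estimates are routine; the one genuinely load-bearing point, and the step I would watch most carefully, is recognizing that every difference in the splitting is divergence free, so that the $X$-norm degenerates to the $\cA$-norm and the bound can be phrased through the weaker $\cL(X,H)$ operator norm rather than the full $\cL(X,X)$ one. This degeneration is exactly what will make the estimate useful in the subsequent convergence analysis, and it hinges on the commuting identity $\bdiv\cJ_h^s = Q_h^{k-1}\bdiv$ of Corollary~\ref{inter}.
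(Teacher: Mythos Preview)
Your proof is correct and follows essentially the same approach as the paper: the same triangle-inequality splitting around $\cJ_h^s P\bsig_h$, the same observation that all differences are divergence free (so $X$-norms collapse to $\cA$-norms), and the same orthogonality-plus-Cauchy--Schwarz argument for the term $\cJ_h^s P\bsig_h - P_h\bsig_h$. The paper packages the two orthogonalities $\inner*{P_h\bsig_h,\bet_h}_\cA=0$ and $\inner*{P\bsig_h,\bet_h}_\cA=0$ into the single identity $\inner*{P_h\bsig_h-\cJ_h^s P\bsig_h,\,P_h\bsig_h-\cJ_h^s P\bsig_h}_\cA=\inner*{P\bsig_h-\cJ_h^s P\bsig_h,\,P_h\bsig_h-\cJ_h^s P\bsig_h}_\cA$, but this is exactly what your two-step version establishes.
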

\begin{proof}
Let us first notice that, by definition of $P$ and $\cJ^s_h$,  for any $\btau_h\in X_h^c$, 
\[
  \bdiv ( \btau_h - \cJ^s_hP\btau_h) =  \bdiv \btau_h - \bdiv \cJ_h P \btau_h = \bdiv \btau_h - Q_h^{k-1}  \bdiv P \btau_h =  \bdiv \btau_h - Q_h^{k-1}  \bdiv  \btau_h = \mathbf 0, 
\]
 which proves that $(I - \cJ^s_hP)X_h^c\subset K_h$. Hence, it follows from the triangle inequality that 
\begin{align}\label{plug}
\begin{split}
	\norm*{(P - P_h)\btau_h}_X &\leq \norm*{ P_h\btau_h - \cJ^s_h P \btau_h }_X + \norm*{ (I - \cJ^s_h) P \btau_h }_X
	\\[1ex]
	& = \norm*{ P_h \btau_h- \cJ^s_h P \btau_h }_\cA + \norm*{ (I - \cJ^s_h) P \btau_h }_\cA, \quad \forall \btau_h\in X_h^c,
\end{split}
\end{align}
where we took into account that $P_h\btau_h - \cJ^s_h P \btau_h = \btau_h - \cJ^s_h P \btau_h - (\btau_h - P_h\btau_h) \in K_h$ and  
\[
  P \btau_h - \cJ^s_h P \btau_h = \btau_h - \cJ^s_h P \btau_h - (\btau_h - P\btau_h) \in K.
\]
To estimate the first term in the right-hand side of \eqref{plug}, we take advantage of the inclusion $K_h \subset K$ to write 
\begin{align*}
  \inner*{ P_h \btau_h- \cJ^s_h P \btau_h, P_h \btau_h- \cJ^s_h P \btau_h }_\cA =
  \inner*{ P \btau_h - \cJ^s_h P \btau_h, P_h \btau_h - \cJ^s_h P \btau_h }_\cA
\end{align*}
and we deduce from the Cauchy-Schwarz inequality that 
\[
  \norm*{ P_h \btau_h - \cJ^s_h P \btau_h }_\cA \leq \norm*{  (I - \cJ^s_h) P \btau_h }_\cA.
\]
Plugging the last estimate in \eqref{plug} gives the result. 
\end{proof}

\begin{remark}
A conforming approximation of problem \eqref{S} based on $X_h^c$ is not useful in practice since it is not straightforward to construct an explicit basis of this finite element space. 
\end{remark}

\section{Spectral correctness of the DG scheme and error estimates}\label{sec5}

\subsection{The main result}
Even in the case of conforming Galerkin approximations of eigenproblems, it is well-known \cite{ActaBoffi} that when the source operator is not compact, a convergent discrete scheme for the source problem doesn't necessarily provide a correct approximation of the spectrum. A fortiori, in our case, Theorem~\ref{cea} is not enough to prevent \eqref{Sh} from producing  spurious eigenvalues.  The procedure introduced in \cite{DNR1,DNR2} to analyse the spectral approximation of non compact operators has been recently adapted in  \cite[Section 5]{LMMR} to a DG context (cf. also \cite{BuffaPerugia}). It is shown that the main ingredient to prove the spectral correctness of the method  is the uniform convergence of $\tilde T_h$ to $\tilde T$ with respect to the following $h$--dependent norm,
\begin{equation}\label{main}
	\norm{\tilde T - \tilde T_h}_h:= \sup_{\btau_h\in \cP_k(\cT_h, \bbS)}\frac{\vertiii{(\tilde T - \tilde T_h)\btau_h}}{\vertiii{\btau}}\longrightarrow 0,\quad \text{when $h\to 0$}.
\end{equation}
 We need the following technical result to prove \eqref{main}. 

\begin{lemma}\label{propC}
Under Assumption~\ref{hyp}, there exists a projector $\mathcal{X}^s_h:\, X^{DG}_h \to X_h^c$ such that
\begin{equation}\label{equivN}
h \norm*{\bdiv_h(\btau_h - \cX_h^s \btau_h)}_{0,\Omega} +  \norm*{\btau_h - \cX_h^s \btau_h}_{0,\Omega} \leq C h 
 \norm*{h_{\cF}^{-1/2} \jump{\btau_h}}_{0,\cF^*_h}\quad \forall \btau_h \in X_h^{DG}, 
\end{equation}
 with $C>0$ independent of $h$.
\end{lemma}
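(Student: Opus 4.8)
The plan is to build $\cX^s_h$ as the composition $\cS_h\circ\cX_h$ of a (non-symmetric) $H(\bdiv)$-conforming averaging operator $\cX_h$ with the symmetrisation operator $\cS_h$ from Lemma~\ref{stokes}. First I would construct
\[
\cX_h:\, \cP_k(\cT_h,\bbM)\to \cP_k(\cT_h,\bbM)\cap H_N(\bdiv,\Omega,\bbM)
\]
by the classical averaging recipe applied to the normal degrees of freedom of the tensorial BDM element (whose face unknowns are exactly the moments of $\btau\bn$). The two spaces share only these face moments; I set each one on an interior face $F\in\cF_h^0$ to the average of the two one-sided values, set it to zero on every Neumann face $F\in\cF_h^N$, and leave all element-interior moments and all Dirichlet-face moments untouched. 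The first choice restores continuity of $\btau_h\bn$ across $\cF_h^0$, the second enforces the constraint $\btau_h\bn=\mathbf 0$ on $\Gamma_N$ encoded in $X$, and the third is harmless since $X$ imposes no condition on $\Gamma_D$. Hence $\cX_h\btau_h\in\cP_k(\cT_h,\bbM)\cap H_N(\bdiv,\Omega,\bbM)$, and when $\btau_h$ is already conforming one has $\jump{\btau_h}=\mathbf 0$ on $\cF_h^*$ and $\cX_h\btau_h=\btau_h$.

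The quantitative heart is a local estimate. On each $K$ the difference $\btau_h-\cX_h\btau_h$ is a polynomial whose only nonzero degrees of freedom are the modified normal moments on the faces $F\in\cF(K)\cap\cF_h^*$, and each such modification equals a fixed (dimension- and element-independent) multiple of the moment of the normal jump $\jump{\btau_h}_F$ (the factor is $\tfrac12$ on interior faces and $1$ on Neumann faces). A scaling argument on the reference element — equivalence of the $L^2$ norm with the Euclidean norm of the degrees of freedom, together with the scaling of the face moments — then yields
\[
  \norm*{\btau_h - \cX_h\btau_h}_{0,K}^2 \;\lesssim\; \sum_{F\in\cF(K)\cap\cF_h^*} h_F\,\norm*{\jump{\btau_h}}_{0,F}^2 .
\]
Summing over $K$ and using $h_F\le h$ gives $\norm*{\btau_h-\cX_h\btau_h}_{0,\Omega}\lesssim h\,\norm*{h_\cF^{-1/2}\jump{\btau_h}}_{0,\cF_h^*}$. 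For the divergence I apply the elementwise inverse inequality $\norm*{\bdiv_h(\btau_h-\cX_h\btau_h)}_{0,K}\lesssim h_K^{-1}\norm*{\btau_h-\cX_h\btau_h}_{0,K}$ together with $h_F\simeq h_K$ from shape regularity; this removes one power of $h_K$ and produces $\norm*{\bdiv_h(\btau_h-\cX_h\btau_h)}_{0,\Omega}\lesssim \norm*{h_\cF^{-1/2}\jump{\btau_h}}_{0,\cF_h^*}$, so that multiplying by $h$ already matches the target for $\cX_h$.

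It remains to symmetrise and transfer the two bounds. Setting $\cX^s_h:=\cS_h\circ\cX_h$, Lemma~\ref{stokes} i) gives $\bdiv(\cX_h\btau_h-\cX^s_h\btau_h)=\mathbf 0$, hence $\bdiv_h(\btau_h-\cX^s_h\btau_h)=\bdiv_h(\btau_h-\cX_h\btau_h)$ and the divergence estimate is inherited verbatim. For the $L^2$ term I split
\[
\norm*{\btau_h-\cX^s_h\btau_h}_{0,\Omega}\le \norm*{\btau_h-\cX_h\btau_h}_{0,\Omega}+\norm*{\cX_h\btau_h-\cS_h(\cX_h\btau_h)}_{0,\Omega},
\]
and bound the second summand by Lemma~\ref{stokes} ii) with $\lesssim\norm*{\cX_h\btau_h-(\cX_h\btau_h)^{\mt}}_{0,\Omega}$. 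Here I exploit that $\btau_h\in X_h^{DG}=\cP_k(\cT_h,\bbS)$ is symmetric, so $\cX_h\btau_h-(\cX_h\btau_h)^{\mt}=(\cX_h\btau_h-\btau_h)-(\cX_h\btau_h-\btau_h)^{\mt}$ and the symmetrisation defect is controlled by $2\norm*{\btau_h-\cX_h\btau_h}_{0,\Omega}$, i.e. again by the averaging error. Since $\cX_h\btau_h\in X_h^c$ whenever $\btau_h$ is conforming and $\cS_h$ fixes symmetric tensors, $\cX^s_h$ restricts to the identity on $X_h^c$ and is therefore a projector. I expect the single genuinely technical step to be the local averaging estimate above: identifying the modified degrees of freedom with moments of $\jump{\btau_h}$ and extracting the correct power of $h_F$ from the reference-element scaling. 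Once that is in place, the global assembly, the inverse inequality, and the symmetrisation step are routine.
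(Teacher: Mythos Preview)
Your approach is essentially the same as the paper's: construct a non-symmetric averaging projector $\cX_h$ into $\cP_k(\cT_h,\bbM)\cap H_N(\bdiv,\Omega,\bbM)$, then compose with $\cS_h$ and transfer the two estimates using Lemma~\ref{stokes} i)--ii) together with the symmetry of $\btau_h\in\cP_k(\cT_h,\bbS)$. The paper simply cites \cite[Proposition~5.2]{MMT} for the existence and estimate of $\cX_h$, whereas you sketch the averaging construction and scaling argument directly; the symmetrisation step and the verification of the projector property are exactly as you describe.
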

\begin{proof}
It is proved in \cite[Proposition 5.2]{MMT} that there exists a projector $\mathcal{X}_h:\, X^{DG}_h \to H_N(\bdiv, \Omega, \bbM)\cap\cP_k(\cT_h,\bbM)$ such that 
\begin{equation}\label{equivN0}
  h \norm*{\bdiv_h(\btau_h - \cX_h \btau_h)}_{0,\Omega} +  \norm*{\btau_h - \cX_h \btau_h}_{0,\Omega} \leq C h 
 \norm*{h_{\cF}^{-1/2} \jump{\btau_h}}_{0,\cF^*_h}\quad \forall \btau_h \in X_h^{DG}, 
\end{equation}
By construction of $\cS_h$, the operator $\cX_h^s := \cS_h\circ \cX_h$ satisfies  $\bdiv \cX_h^s \btau_h = \bdiv \cX_h \btau_h$. Morover,  using property ii) of Lemma~\ref{stokes}  and reasoning as for estimate \eqref{hu1} yields 
\[
  \norm*{\btau_h - \cX_h^s \btau_h}_{0,\Omega} \lesssim  \norm*{\btau_h - \cX_h \btau_h}_{0,\Omega}. 
\]
It follows that \eqref{equivN} is a direct consequence of \eqref{equivN0}.
\end{proof}

We point out that the stability of $\mathcal{X}^s_h:\, X^{DG}_h \to X_h^c$ follows directly from the triangle inequality and \eqref{equivN}, namely,
\begin{equation}\label{stabX}
	\norm*{\cX_h^s \btau_h}_X \leq C \vertiii{\btau_h},\quad \forall \btau_h \in X_h^{DG},
\end{equation} 
with $C>0$ independent of $h$.

We are now in a position to prove the main result of this article.

\begin{theorem}\label{intermediate}
Under Assumptions \ref{a0} and \ref{hyp}, it holds 
	\[
\norm*{\tilde T - \tilde T_h}_h \leq C \left( h +  \norm*{(I - \cJ^s_h)P}_{\cL(X, H)} + \norm{(I - \cJ_h^s)TP}_{\cL(X, X)}\right),
\]
with $C$ independent of $h$. 
\end{theorem}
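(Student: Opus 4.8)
The plan is to start from the C\'ea estimate of Theorem~\ref{cea}, which reduces the whole task to bounding $\inf_{\bze_h\in X_h^{DG}}\vertiii{\tilde T\btau_h-\bze_h}_*$ for an arbitrary $\btau_h\in X_h^{DG}$, after which one divides by $\vertiii{\btau_h}$ and takes the supremum to recover the norm in \eqref{main}. This is legitimate because $\tilde T\btau_h\in X$ carries the $\vertiii{\cdot}_*$-regularity guaranteed by \eqref{regT}. To generate the three terms in the statement I would first pass to a conforming surrogate of $\btau_h$: set $\bze:=\cX_h^s\btau_h\in X_h^c\subset X$ (Lemma~\ref{propC}), which is stable, $\norm{\bze}_X\lesssim\vertiii{\btau_h}$ by \eqref{stabX}, and split
\[
\tilde T\btau_h-\bze_h = \tilde T(\btau_h-\bze) + (T\bze-\bze_h).
\]

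For the first summand I would show $\vertiii{\tilde T(\btau_h-\bze)}_*\lesssim h\,\vertiii{\btau_h}$. Since $\tilde T(\btau_h-\bze)\in X$ is $H(\bdiv)$-conforming, its jump contribution vanishes, and $\vertiii{\tilde T(\btau_h-\bze)}=\norm{\tilde T(\btau_h-\bze)}_X\le\norm{\btau_h-\bze}_\cA$ by \eqref{cotaT}, where $\norm{\btau_h-\bze}_\cA\lesssim\norm{\btau_h-\cX_h^s\btau_h}_{0,\Omega}\lesssim h\,\vertiii{\btau_h}$ by Lemma~\ref{propC}. The only extra piece is the $\mean{\cdot}$-term of $\vertiii{\cdot}_*$, which I would control by a scaled $H^1$-trace inequality applied to the continuous field $\tfrac{1}{\varrho}\bdiv\tilde T(\btau_h-\bze)$ together with its full $H^1$-bound \eqref{regT}, again producing a factor $h$.

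The heart of the argument is the second summand, where I would choose $\bze_h:=\cJ_h^s(T\bze)\in X_h^c$ and use the decomposition $T\bze=TP\bze+(I-P)\bze$, recalling that $T$ is the identity on $K$, so $T(I-P)\bze=(I-P)\bze$. Two structural facts drive the estimate. First, $\cJ_h^s$ reproduces $X_h^c$ (Corollary~\ref{inter}~i) forces $\cJ_h^s\bze=\bze$), hence $(I-\cJ_h^s)(I-P)\bze=-(I-\cJ_h^s)P\bze$ and therefore $(I-\cJ_h^s)T\bze=(I-\cJ_h^s)TP\bze-(I-\cJ_h^s)P\bze$. Second, and this is the key point, the divergence of $(I-\cJ_h^s)P\bze$ vanishes: the commuting property of Corollary~\ref{inter}~ii) together with $\bdiv P\bze=\bdiv\bze\in\cP_{k-1}(\cT_h,\R^d)$ gives $\bdiv(I-\cJ_h^s)P\bze=(I-Q_h^{k-1})\bdiv\bze=\mathbf 0$. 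Consequently only $TP\bze$ feeds the divergence and $\mean{\cdot}$ terms of $\vertiii{\cdot}_*$, which I would bound by $\norm{(I-\cJ_h^s)TP}_{\cL(X,X)}\vertiii{\btau_h}$, whereas the $\cA$-term splits into both $\norm{(I-\cJ_h^s)P}_{\cL(X,H)}$ and $\norm{(I-\cJ_h^s)TP}_{\cL(X,X)}$ (evaluated at $\bze$, then closed via $\norm{\bze}_X\lesssim\vertiii{\btau_h}$); the jump terms again vanish by conformity.

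The main obstacle I anticipate is the careful treatment of the $\mean{\tfrac{1}{\varrho}\bdiv_h(\cdot)}$ contribution, which is not a pure best-approximation quantity. There I would write $\tfrac{1}{\varrho}\bdiv_h(I-\cJ_h^s)TP\bze=(I-Q_h^{k-1})\big(\tfrac{1}{\varrho}\bdiv TP\bze\big)$ and treat it in two parts: the continuous field $\tfrac{1}{\varrho}\bdiv TP\bze=\tfrac{1}{\varrho}\bdiv T\bze\in H^1$ (via \eqref{regT}) through a scaled trace inequality, producing an extra $h\,\vertiii{\btau_h}$, and the discrete part $Q_h^{k-1}$-projection through the discrete trace inequality of Lemma~\ref{TraceDG}; the net contribution is absorbed into $\norm{(I-\cJ_h^s)TP}_{\cL(X,X)}\vertiii{\btau_h}+h\,\vertiii{\btau_h}$. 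Assembling the two summands, dividing by $\vertiii{\btau_h}$ and taking the supremum over $\btau_h\in\cP_k(\cT_h,\bbS)$ then delivers the claimed bound.
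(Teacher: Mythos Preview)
Your overall route is valid and genuinely different from the paper's. The paper does \emph{not} apply the C\'ea estimate to all of $\btau_h$. Instead it decomposes $\btau_h=(I-\cX_h^s)\btau_h+P_h\cX_h^s\btau_h+(I-P_h)\cX_h^s\btau_h$, observes that $(I-P_h)\cX_h^s\btau_h\in K_h\subset K$ lies in $\Ker(\tilde T-\tilde T_h)$, bounds $\|(P-P_h)\cX_h^s\btau_h\|_\cA$ via Lemma~\ref{p_h}, and applies C\'ea only to $P\cX_h^s\btau_h$. Your argument bypasses $P_h$ and Lemma~\ref{p_h} entirely: you apply C\'ea at once and then exploit the projection identity $\cJ_h^s\bze=\bze$ (which indeed follows from Corollary~\ref{inter}~i)) to rewrite $(I-\cJ_h^s)T\bze=(I-\cJ_h^s)TP\bze-(I-\cJ_h^s)P\bze$, together with the key observation $\bdiv(I-\cJ_h^s)P\bze=(I-Q_h^{k-1})\bdiv\bze=\mathbf 0$. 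This is a clean alternative; the paper's detour through $P_h$ is not strictly necessary, though it makes the r\^ole of the discrete kernel more explicit.

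There is, however, a small gap in your treatment of the $\mean{\cdot}$-term for the second summand. Writing $\bu:=\tfrac{1}{\varrho}\bdiv TP\bze$, the proposed split of $(I-Q_h^{k-1})\bu$ into the ``continuous'' piece $\bu$ and the ``discrete'' piece $Q_h^{k-1}\bu$ does \emph{not} produce the bound you claim: a scaled $H^1$-trace applied to $\bu$ alone gives $\norm{\sqrt{h_\cF}\,\mean{\bu}}_{0,\cF_h^*}\lesssim\norm{\bu}_{0,\Omega}+h|\bu|_{1,\Omega}\lesssim\vertiii{\btau_h}$ with \emph{no} factor $h$ (here $\norm{\bu}_{1,\Omega}\lesssim\norm{P\bze}_\cA$ is only $O(\vertiii{\btau_h})$, unlike in your first summand where the input $\btau_h-\bze$ was already $O(h)$), and the discrete trace on $Q_h^{k-1}\bu$ similarly yields $\norm{\bu}_{0,\Omega}$, which is neither $h\vertiii{\btau_h}$ nor controlled by $\norm{(I-\cJ_h^s)TP}_{\cL(X,X)}$. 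The fix is precisely what the paper does in \eqref{mo2}: bound $(I-Q_h^{k-1})\bu$ \emph{as a whole} via the standard local estimate $h_K^{1/2}\norm{(I-Q_K^{k-1})\bu}_{0,\partial K}\lesssim h_K|\bu|_{1,K}$ (Poincar\'e plus trace, using that $(I-Q_K^{k-1})\bu$ has zero mean), which gives $\norm{\sqrt{h_\cF}\,\mean{(I-Q_h^{k-1})\bu}}_{0,\cF_h^*}\lesssim h|\bu|_{1,\Omega}\lesssim h\vertiii{\btau_h}$. With this correction your argument goes through.
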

\begin{proof}
For any $\btau_h\in X_h^{DG}$, we consider the splitting $\btau_h = (I - \cX_h^s)\btau_h + P_h\cX_h^s\btau_h + (I - P_h) \cX_h^s\btau_h$ and exploit the fact that $(I - P_h) \cX_h^s\btau_h\in K_h\subset K$ is in the kernel of $(\tilde T - \tilde T_h)$  to obtain 
	\begin{align*}
	(\tilde T - \tilde T_h)\btau_h &= (\tilde T - \tilde T_h)(\cI - \cX_h^s)\btau_h + (\tilde T - \tilde T_h)P_h\cX_h^s\btau_h
	\\[1ex]
	&=(\tilde T - \tilde T_h)(I - \cX_h^s)\btau_h + (\tilde T - \tilde T_h)(P_h - P)\cX_h^s\btau_h + (\tilde T - \tilde T_h)P\cX_h^s\btau_h.
\end{align*}
It follows from the triangle inequality, \eqref{cotaT} and \eqref{bTh} that 
\begin{equation}\label{split3}
	\vertiii{(\tilde T - \tilde T_h)\btau_h} \leq 3\norm*{(\cI - \cX_h^s)\btau_h}_\cA + 3  \norm{ (P - P_h)\cX_h^s\btau_h}_{\cA} + \vertiii{ (\tilde T - \tilde T_h)P\cX_h^s\btau_h }.
\end{equation}
Using \eqref{equivN}, we can bound the first term in the right-hand side of \eqref{split3} as follows,
\begin{align}\label{or1}
	\norm*{(I - \cX_h^s)\btau_h}_\cA& \leq  a^+ \norm{ (I - \cX_h^s)\btau_h }_{0,\Omega}
	 \leq C h \vertiii{\btau_h}.
\end{align}
For the second term,  \eqref{stabX} and Lemma~\ref{p_h} yield 
\begin{align}\label{or2}
	\norm*{(P - P_h)\cX_h^s\btau_h}_\cA & \leq  \norm*{P - P_h}_{\cL(X^c_h, X)}  \norm*{\cX_h^s\btau_h}_X \lesssim \norm*{(I - \cJ^s_h)P}_{\cL(X, H)} \vertiii{\btau_h}.
\end{align}

To bound the third term in the right-hand side of \eqref{split3}, we begin by applying Céa estimate \eqref{Cea} to obtain  
\begin{equation}\label{porCea}
	\vertiii{ (\tilde T - \tilde T_h)P\cX_h^s\btau_h } \leq ( 1 + 2 M ) \vertiii{(I - \cJ_h^s) TP\cX_h^s\btau_h}_*.
\end{equation}
Let us introduce the notation $\bu:= \frac{1}{\varrho}\bdiv (TP\cX_h^s \btau_h) \in H_D^1(\Omega,\R^d)$ and notice that, by virtue of \eqref{regT} and \eqref{stabX},  
\begin{equation}\label{uu}
	\norm{\bu}_{1,\Omega} \leq C \norm*{ P\cX_h^s \btau_h }_\cA \leq C \norm*{\cX_h^s \btau_h }_X \leq C_1 \vertiii{\btau_h}. 
\end{equation}
Moreover, taking into account that $\varrho$ is piecewise constant and using Corollary~\ref{inter} ii) we can write  $\frac{1}{\varrho}\bdiv (I - \cJ_h^s) TP\cX_h^s\btau_h = \bu  - \frac{1}{\varrho}Q_h^{k-1} (\bdiv TP\cX_h^s\btau_h) = (I - Q_h^{k-1})\bu$ and it follows that  
	\begin{align}\label{root}
	\vertiii{(I - \cJ_h^s) TP\cX_h^s\btau_h}^2_*=  \norm*{(I - \cJ_h^s)TP\cX_h^s \btau_h }^2_X  +  \norm*{\sqrt{\varrho_\cF h_\cF} \mean{(I - Q_h^{k-1})\bu}}^2_{0,\cF_h^*}.
	\end{align}
Now, from the one hand, 
\begin{equation}\label{mo1}
	\norm*{(I - \cJ_h^s) TP\cX_h^s\btau_h}_X \leq \norm{(I - \cJ_h^s)TP}_{\cL(X, X)}\norm*{\cX_h^s \btau_h }_X \lesssim  \norm{(I - \cJ_h^s)TP}_{\cL(X, X)}\vertiii{ \btau_h }
\end{equation}\label{mo2}
and from the other hand, a classical scaling argument combined with \eqref{uu} yields
	\begin{align}\label{mo2}
	  \norm*{\sqrt{ h_\cF} (\bu - Q_h^{k-1}\bu)}_{0,\cF_h^*}
	\lesssim   h |\bu|_{1,\Omega} \leq C h \vertiii{ \btau_h }.
\end{align}
Using \eqref{mo1} and \eqref{mo2} in \eqref{porCea} gives the estimate 
\begin{equation}\label{or3}
	\vertiii{(I - \cJ_h^s) TP\cX_h^s\btau_h}_* \leq C \left( h + \norm{(I - \cJ_h^s)TP}_{\cL(X, X)} \right) \vertiii{ \btau_h }. 
\end{equation}
Finally, plugging \eqref{or1},  \eqref{or2}, and \eqref{or3} in \eqref{split3} gives the result.


\end{proof}

\begin{corollary}\label{T-Th}
	Under Assumptions \ref{a0} and \ref{hyp}, it holds 
	\[
\lim_{h\to 0} \norm*{\tilde T - \tilde T_h}_h=0.
\]
\end{corollary}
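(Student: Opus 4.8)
The plan is to read off the result directly from Theorem~\ref{intermediate}, by proving that each of the three terms on its right-hand side tends to zero as $h\to 0$. The term $h$ is trivial, so the whole matter reduces to showing
\[
\norm*{(I - \cJ^s_h)P}_{\cL(X, H)} \to 0 \qquad\text{and}\qquad \norm*{(I - \cJ_h^s)TP}_{\cL(X, X)} \to 0.
\]
Both quantities are operator norms of $I-\cJ_h^s$ precomposed with a \emph{compact} operator, so the engine of the argument is the elementary principle that a family of uniformly bounded operators converging pointwise to $0$ converges uniformly to $0$ on precompact sets.

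First I would record two facts about the smoothed projector. From Corollary~\ref{inter} i) (taking $\btau_h=\mathbf 0$) the family $I-\cJ_h^s$ is uniformly bounded on $H=L^2(\Omega,\bbS)$; combining Corollary~\ref{inter} i)--ii) with $\bdiv(I-\cJ_h^s)\bsig=(I-Q_h^{k-1})\bdiv\bsig$ shows that it is also uniformly bounded as a family $X\to X$. Moreover, Remark~\ref{rem1} supplies the pointwise convergence $\norm*{(I-\cJ_h^s)\bsig}_X\to 0$ for every fixed $\bsig\in X$, whence $\norm*{(I-\cJ_h^s)\bsig}_\cA\to 0$ as well, since $\norm*{\cdot}_\cA\le\vertiii{\cdot}=\norm*{\cdot}_X$ on $X$. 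On the compactness side I would invoke Lemma~\ref{compact}: because $P(X)\hookrightarrow H$ is compact and $P\colon X\to K^\bot=P(X)$ is bounded, $P\colon X\to H$ is compact; and because $T\colon K^\bot\to K^\bot$ is compact (as recalled just before Theorem~\ref{specT}), $TP\colon X\to X$ is compact.

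For the second term, given $\varepsilon>0$ the precompactness of $P(B_X)$ in $H$ (where $B_X$ is the unit ball of $X$) yields a finite net $\{P\bsig_1,\dots,P\bsig_N\}\subset P(X)$. For $\bsig\in B_X$ I would pick $\bsig_i$ with $\norm*{P\bsig-P\bsig_i}_\cA<\varepsilon$ and split
\[
\norm*{(I-\cJ_h^s)P\bsig}_\cA \le \norm*{(I-\cJ_h^s)(P\bsig-P\bsig_i)}_\cA + \norm*{(I-\cJ_h^s)P\bsig_i}_\cA,
\]
bounding the first summand by the uniform $H$-bound times $\varepsilon$, and noting that $\max_i\norm*{(I-\cJ_h^s)P\bsig_i}_\cA\to 0$ by pointwise convergence. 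Taking the supremum over $\bsig\in B_X$ gives $\norm*{(I-\cJ^s_h)P}_{\cL(X,H)}\to 0$. The third term is handled identically, using the compactness of $TP\colon X\to X$, a net contained in $P(X)\cap T(X)\subset X$, the uniform $X$-bound, and pointwise convergence measured in $\norm*{\cdot}_X$.

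The step I would watch most carefully is precisely the choice of net: the pointwise convergence of Remark~\ref{rem1} is guaranteed only for arguments lying in $X$, not for arbitrary points of the $H$-closure of the range, so it is essential that the net consist of genuine images $P\bsig_i$ (respectively $TP\bsig_i$), which belong to $P(X)\subset X$ (respectively $P(X)\cap T(X)\subset X$). With this care the three estimates combine, and substituting them into Theorem~\ref{intermediate} delivers $\norm*{\tilde T-\tilde T_h}_h\to 0$.
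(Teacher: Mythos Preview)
Your proposal is correct and follows essentially the same route as the paper's own proof: reduce to Theorem~\ref{intermediate}, then combine the pointwise convergence of $I-\cJ_h^s$ (Remark~\ref{rem1}) with the compactness of $P\colon X\to H$ and $TP\colon X\to X$ (Lemma~\ref{compact} and the discussion before Theorem~\ref{specT}) to upgrade to uniform convergence. You have in fact spelled out the $\varepsilon$-net argument and the uniform boundedness of $I-\cJ_h^s$ more explicitly than the paper, which simply asserts that pointwise convergence plus compactness yields uniform convergence; your care in keeping the net inside $P(X)\subset X$ (respectively $TP(X)\subset X$) so that Remark~\ref{rem1} applies is exactly the right point to watch.
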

\begin{proof}
The pointwise convergence of $I - \cJ_h^s:\, X \to X$ to zero (ensured by Corollary~\ref{inter}) and  the compactness of $P:\, X \to H$  and $TP:\, X \to X$  imply that the operators $(I - \cJ_h^s)P:\, X\to H$ and $(I - \cJ_h^s)TP:\, X\to X$ are uniformly convergent to zero; namely,
\[
  \lim_{h\to 0} \norm*{(I - \cJ_h^s)P}_{\mathcal{L}(X, H)} = 0,\quad \text{and}\quad \lim_{h\to 0} \norm*{(I - \cJ_h^s)TP}_{\mathcal{L}(X,X)} = 0,
\]
and the result follows directly from Theorem~\ref{intermediate}.
\end{proof}

\subsection{Spectral correctness and convergence}
For the sake of completeness, in the remainder of this section we show (by applying a number of results from \cite[Section 5]{LMMR}) how to exploit property \eqref{main} to derive the correct spectral convergence of \eqref{Sh}. Let us first introduce some notations. For $\bsig\in X^{DG}(h)$ and $E$ and $F$ closed subspaces of $X^{DG}(h)$, we set $\delta(\bsig, E):=\inf_{\btau\in E}\vertiii{\bsig - \btau}$, $\delta( E, F):=\sup_{\bsig \in E:\,\vertiii{\bsig}=1}\delta(\bsig, F)$, and $\gap( E, F):=\max\set{\delta( E, F),\delta( F, E)}$, the latter being the so called \textit{gap} between subspaces $E$ and $F$. 

Let $\Lambda\subset \mathbb C\setminus \set{0,1}$ be an arbitrary compact set with smooth boundary $\partial \Lambda$ satisfying $\partial \Lambda \cap \sp(T) = \emptyset$. We assume that there are $m$ eigenvalues   $\eta^\Lambda_1, \ldots, \eta^\Lambda_m$ of $T$ (repeated according to their algebraic multiplicities)   inside $\partial \Lambda$. The following result shows that the resolvent $\left(z I- \tilde T\right)^{-1}: X^{DG}(h)\longrightarrow X^{DG}(h)$ is uniformly bounded with respect to $h$ and $z\in \partial \Lambda$.

\begin{lemma}\label{TDG}
There exists a constant $C>0$ independent of $h$ such that 
\[
\vertiii{(z I - \tilde T) \btau } \geq C\, \vertiii{\btau} \quad \forall \btau \in X^{DG}(h), 
\]
for all $z\in \partial \Lambda$.
\end{lemma}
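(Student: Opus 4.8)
The plan is to exploit that $\tilde T$ maps $X^{DG}(h)$ into the conforming subspace $X$, so that the interelement jumps of $(zI-\tilde T)\btau$ are carried entirely by $\btau$ itself, and to reduce the estimate on the broken space to the uniform boundedness of the continuous resolvent on $X$. Since $\partial\Lambda$ is a compact subset of the resolvent set $\mathbb{C}\setminus\sp(T)$ and $z\mapsto(zI-T)^{-1}$ is continuous on it, there exists $C_\Lambda>0$ such that $\norm*{(zI-T)\bsig}_X\geq C_\Lambda^{-1}\norm*{\bsig}_X$ for all $\bsig\in X$ and $z\in\partial\Lambda$. For $\bsig\in X$ one has $\vertiii{\bsig}=\norm*{\bsig}_X$ and $(zI-\tilde T)\bsig=(zI-T)\bsig\in X$, so the asserted bound already holds on the conforming part.

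I would next control the jump seminorm $J(\btau):=\norm*{\varrho_\cF^{-1/2}h_\cF^{-1/2}\jump{\btau}}_{0,\cF^*_h}$. Because $\tilde T\btau\in X$ has vanishing jumps, $\jump{(zI-\tilde T)\btau}=z\,\jump{\btau}$; as $\abs{z}$ is bounded below on $\partial\Lambda$ (note $0\in\sp(T)$, hence $0\notin\partial\Lambda$), the jump contribution to $\vertiii{(zI-\tilde T)\btau}$ yields $J(\btau)\leq C\,\vertiii{(zI-\tilde T)\btau}$ with $C$ independent of $h$ and $z$.

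For the remaining components I would split $\btau\in X^{DG}(h)$. Writing $\btau=\bsig+\btau_h$ with $\bsig\in X$ and $\btau_h\in X_h^{DG}$, I set $\btau_c:=\bsig+\cX_h^s\btau_h\in X$ and $\btau_d:=\btau_h-\cX_h^s\btau_h\in X_h^{DG}$; since $\cX_h^s$ is a projector onto $X_h^c$ (hence the identity on $X\cap X_h^{DG}=X_h^c$) this decomposition is independent of the chosen splitting and satisfies $\jump{\btau_d}=\jump{\btau}$. Lemma~\ref{propC} then gives $\norm*{\btau_d}_{0,\Omega}\leq C h\,J(\btau)$ and $\norm*{\bdiv_h\btau_d}_{0,\Omega}\leq C\,J(\btau)$. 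Bounding $\tilde T\btau_d$ through \eqref{cotaT} and collecting the $\cA$-, divergence- and jump-contributions, these estimates produce $\vertiii{(zI-\tilde T)\btau_d}\leq C\,J(\btau)$.

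Finally, as $\btau_c\in X$ I would write $(zI-T)\btau_c=(zI-\tilde T)\btau-(zI-\tilde T)\btau_d\in X$ and apply the continuous bound to obtain $\norm*{\btau_c}_X\leq C_\Lambda\big(\vertiii{(zI-\tilde T)\btau}+C\,J(\btau)\big)$. Combining this with $\vertiii{\btau}\leq\norm*{\btau_c}_X+\vertiii{\btau_d}$ and $\vertiii{\btau_d}\leq C\,J(\btau)$, and then absorbing $J(\btau)$ via the second step, delivers $\vertiii{\btau}\leq C\,\vertiii{(zI-\tilde T)\btau}$, which is the claim. I expect the principal difficulty to lie in handling the full broken space $X^{DG}(h)$ rather than $X_h^{DG}$ alone: one must peel off a genuinely conforming piece $\btau_c\in X$ on which the continuous resolvent acts, while showing the discrete remainder $\btau_d$ is small in the $\vertiii{\cdot}$ norm. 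The operator $\cX_h^s$ of Lemma~\ref{propC} is exactly what enables this, its reproduction of elements of $X_h^c$ being what makes $\btau_c$ well defined and confines the remainder to the controllable jump seminorm.
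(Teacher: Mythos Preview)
Your argument is correct, but it is more elaborate than needed and, more importantly, it invokes Lemma~\ref{propC} and hence implicitly relies on Assumption~\ref{hyp}. The statement of Lemma~\ref{TDG} carries no such hypothesis (contrast with Lemma~\ref{ThDG}, which does), and the paper simply cites \cite[Lemma~3.2]{LMMR}, where the symmetric averaging operator $\cX_h^s$ is not available. So while your route reaches the goal, it proves a weaker result than the lemma asserts.

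A shorter, assumption--free proof exploits only that $\tilde T$ maps $X^{DG}(h)$ into $X$. Since $\tilde T\btau\in X$ one has $\tilde T\big[(zI-\tilde T)\btau\big]=z\tilde T\btau-\tilde T(\tilde T\btau)=(zI-T)\tilde T\btau$, so the continuous resolvent bound on $X$ combined with \eqref{cotaT} gives
\[
\norm{\tilde T\btau}_X \le C_\Lambda\,\norm{(zI-T)\tilde T\btau}_X
= C_\Lambda\,\norm{\tilde T[(zI-\tilde T)\btau]}_X
\le C_\Lambda\,\norm{(zI-\tilde T)\btau}_\cA
\le C_\Lambda\,\vertiii{(zI-\tilde T)\btau}.
\]
Writing $z\btau=(zI-\tilde T)\btau+\tilde T\btau$ and using $\vertiii{\tilde T\btau}=\norm{\tilde T\btau}_X$ then yields $|z|\,\vertiii{\btau}\le(1+C_\Lambda)\,\vertiii{(zI-\tilde T)\btau}$, and the uniform lower bound on $|z|$ over $\partial\Lambda$ (valid because $0\in\sp(T)$ forces $0\notin\partial\Lambda$) finishes the proof. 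This avoids any conforming decomposition and is presumably the content of the cited reference. Your decomposition via $\cX_h^s$ is a natural reflex in a DG setting, and the well-definedness check you gave is nice, but here it is machinery you do not need.
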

\begin{proof}
	See \cite[Lemma 3.2]{LMMR}
\end{proof}
 We deduce from Lemma~\ref{TDG} that the operator $\cE:=\frac{1}{2\pi i}\displaystyle\int_{\partial\Lambda}\left(z I- \tilde T\right)^{-1}\, dz: X^{DG}(h)\longrightarrow X^{DG}(h)$ is well-defined and bounded uniformly in $h$. Moreover, $\cE|_{X}:\, X \to X$ is a projector onto the finite dimensional space $\cE(X)$ spanned by the generalized eigenfunctions associated with the finite set of eigenvalues of $T$ contained in $\Lambda$. Actually, it is easy to check that $\tilde T: X^{DG}(h) \to X^{DG}(h)$ and $T: X \to X$ have the same eigenvalues in $\Lambda$ and that $\cE(X^{DG}(h)) = \cE(X)$. 

The next step consists in combining Lemma~\ref{TDG} and Theorem~\ref{T-Th} to deduce that the discrete resolvent $\left(z I- \tilde T_h\right)^{-1}: X^{DG}(h)\longrightarrow X^{DG}(h)$ is also uniformly bounded, provided $h$ is small enough, cf.  \cite[Lemma 5.1]{LMMR} for more details.

\begin{lemma} \label{ThDG}
Under Assumptions \ref{a0} and \ref{hyp}, there exists $h_0>0$ such that for all $h\leq h_0$,
\[
\vertiii{(z I - \tilde T_h) \btau } \geq C\, \vertiii{\btau} \quad \text{for all $\btau \in X^{DG}(h)$ and $z\in \partial \Lambda$} 
\]
with $C>0$ independent of $h$.
\end{lemma}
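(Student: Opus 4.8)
The goal is a uniform-in-$h$ lower bound for $z I - \tilde T_h$ on $X^{DG}(h)$ for $z\in\partial\Lambda$, and the natural route is to perturb off the already-established bound for $z I-\tilde T$ in Lemma~\ref{TDG}. The plan is to write, for arbitrary $\btau\in X^{DG}(h)$,
\[
\vertiii{(zI-\tilde T_h)\btau} \;\geq\; \vertiii{(zI-\tilde T)\btau} - \vertiii{(\tilde T-\tilde T_h)\btau},
\]
and to control the second term by the $h$-dependent operator norm $\norm{\tilde T-\tilde T_h}_h$. The subtlety is that $\btau$ ranges over all of $X^{DG}(h)=X+X_h^{DG}$, whereas the norm $\norm{\cdot}_h$ in \eqref{main} only measures the action of $\tilde T-\tilde T_h$ on the discrete component $\btau_h\in\cP_k(\cT_h,\bbS)$. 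So the first step is to split $\btau$ into its continuous and discrete parts and handle them separately.

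\medskip
\emph{Main steps.} First I would decompose $\btau = \bchi + \btau_h$ with $\bchi\in X$ and $\btau_h\in X_h^{DG}$ (this is possible by definition of $X^{DG}(h)$). On the continuous part, $\tilde T-\tilde T_h$ is controlled directly: since $\bchi\in X$, Céa's estimate (Theorem~\ref{cea}) together with the approximation property \eqref{approx}/Remark~\ref{rem1} gives $\vertiii{(\tilde T-\tilde T_h)\bchi}\to 0$, and in fact one expects $\vertiii{(\tilde T-\tilde T_h)\bchi}\leq \varepsilon(h)\vertiii{\bchi}$ with $\varepsilon(h)\to 0$. On the discrete part, the definition \eqref{main} yields directly $\vertiii{(\tilde T-\tilde T_h)\btau_h}\leq \norm{\tilde T-\tilde T_h}_h\,\vertiii{\btau_h}$, which tends to zero by Corollary~\ref{T-Th}. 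Combining, $\vertiii{(\tilde T-\tilde T_h)\btau}\leq \varepsilon(h)\bigl(\vertiii{\bchi}+\vertiii{\btau_h}\bigr)$; after arranging the splitting so that $\vertiii{\bchi}+\vertiii{\btau_h}\lesssim\vertiii{\btau}$ (using stability of the decomposition), this reads $\vertiii{(\tilde T-\tilde T_h)\btau}\leq \varepsilon(h)\vertiii{\btau}$ with $\varepsilon(h)\to 0$ uniformly in $z$.

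\medskip
\emph{Conclusion.} Let $C_0>0$ be the constant from Lemma~\ref{TDG}, so that $\vertiii{(zI-\tilde T)\btau}\geq C_0\vertiii{\btau}$ for all $\btau\in X^{DG}(h)$ and $z\in\partial\Lambda$. Choose $h_0>0$ small enough that $\varepsilon(h)\leq C_0/2$ for all $h\leq h_0$; this is where Corollary~\ref{T-Th} is indispensable, since it furnishes convergence in precisely the norm $\norm{\cdot}_h$ that appears. Then for $h\leq h_0$,
\[
\vertiii{(zI-\tilde T_h)\btau}\;\geq\; C_0\vertiii{\btau} - \tfrac{C_0}{2}\vertiii{\btau}\;=\;\tfrac{C_0}{2}\vertiii{\btau},
\]
which is the claim with $C=C_0/2$, independent of both $h$ and $z\in\partial\Lambda$.

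\medskip
\emph{Main obstacle.} The delicate point is the clean separation of the continuous and discrete contributions of $\btau$ and the verification that the splitting $\btau=\bchi+\btau_h$ can be chosen with $\vertiii{\bchi}+\vertiii{\btau_h}$ controlled by $\vertiii{\btau}$ uniformly in $h$; without this, the perturbation estimate would not be uniform. The bound on the continuous part must also be shown to be uniform in $z\in\partial\Lambda$, which follows because the relevant estimates (Théorème~\ref{cea}, \eqref{bTh}) are independent of $z$. This is exactly the argument carried out in \cite[Lemma~5.1]{LMMR}, to which the proof defers for the technical details.
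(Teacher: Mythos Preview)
Your overall perturbation strategy---bound $(zI-\tilde T_h)$ from below via Lemma~\ref{TDG} and a smallness estimate for $(\tilde T-\tilde T_h)$---is the right starting point, and it is indeed what the paper invokes when it combines Lemma~\ref{TDG} with Corollary~\ref{T-Th} and defers to \cite[Lemma~5.1]{LMMR}. However, your sketch contains a genuine gap in the treatment of the continuous component $\bchi\in X$.

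You write that C\'ea's estimate together with Remark~\ref{rem1} gives $\vertiii{(\tilde T-\tilde T_h)\bchi}\to 0$ and that ``in fact one expects $\vertiii{(\tilde T-\tilde T_h)\bchi}\leq \varepsilon(h)\vertiii{\bchi}$ with $\varepsilon(h)\to 0$.'' This uniform estimate is precisely what \emph{cannot} be expected here, and it is the whole reason the eigenvalue analysis of this problem is delicate. C\'ea's estimate \eqref{Cea} yields only pointwise convergence for each fixed $\bchi$. Since $\tilde T$ is non-compact---it restricts to the identity on the infinite-dimensional subspace $K$---operator-norm convergence $\sup_{\bchi\in X,\,\norm{\bchi}_X=1}\vertiii{(\tilde T-\tilde T_h)\bchi}\to 0$ fails. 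Concretely, for $\bchi\in K$ one has $\tilde T\bchi=\bchi$, so $\vertiii{(\tilde T-\tilde T_h)\bchi}\lesssim\inf_{\btau_h}\vertiii{\bchi-\btau_h}_*$, and this best-approximation error admits no uniform rate over the unit ball of $K$. Hence even with a stable splitting $\btau=\bchi+\btau_h$ (which can indeed be arranged via $\cX_h^s$), the continuous piece is not controlled by your argument.

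You correctly flag the stability of the decomposition as an obstacle, but that is the easier of the two issues; the hard part---which the argument in \cite[Lemma~5.1]{LMMR} addresses and which your sketch omits---is to avoid ever needing uniform convergence of $\tilde T_h$ to $\tilde T$ on $X$. One route is to observe that $\tilde T_h$ maps into $X_h^{DG}$, so that $z\btau=(zI-\tilde T_h)\btau+\tilde T_h\btau$ forces $\btau$ to lie close to $X_h^{DG}$ whenever $(zI-\tilde T_h)\btau$ is small; the bound on $X_h^{DG}$ (where $\norm{\tilde T-\tilde T_h}_h$ applies) then does the work. Your proposal does not contain this idea.
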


Here again, it follows from Lemma~\ref{ThDG} that, for $h$ small enough, the linear operator  
\[
\cE_h:=\frac{1}{2\pi i}
\int_{\Lambda}\left(z I- \tilde T_h\right)^{-1}\, dz: X^{DG}(h)\longrightarrow X^{DG}(h)
\] 
is uniformly bounded in $h$. Likewise,  $\cE_h|_{X^{DG}_h}: X^{DG}_h\to X^{DG}_h$ is a projector onto the $\tilde T_h$-invariant subspace $\cE_h(X^{DG}(h)) = \cE_h(X^{DG}_h)$  corresponding to the eigenvalues of $T_h:\, X^{DG}_h \to X^{DG}_h$ contained in $\Lambda$.  

The approximation properties of the eigenfunctions of problem~\eqref{S} by means of those of problem~\eqref{Sh} are obtained as a consequence of the following estimate of the distance between $\cE_h(X^{DG}_h)$ and $\cE(X)$, measured in terms of the gap $\gap$. 

\begin{theorem}
Under Assumptions \ref{a0} and \ref{hyp}, there exists $h_0>0$ such that for all $h\leq h_0$, 
\begin{equation}\label{delta}
	\gap(\cE(X), \cE_h(X^{DG}_h)) \leq C \left( \norm{\tilde T - T_h}_h + \delta(\cE(X), X^{DG}_h)\right),
\end{equation}
with $C>0$ independent of $h$. 
\end{theorem}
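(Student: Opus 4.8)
The plan is to prove the gap estimate \eqref{delta} by establishing the two one-sided distances $\delta(\cE(X), \cE_h(X^{DG}_h))$ and $\delta(\cE_h(X^{DG}_h), \cE(X))$ separately, then taking the maximum. The spectral projectors $\cE$ and $\cE_h$ are the main tools: since both project onto the finite-dimensional invariant subspaces associated with the eigenvalues of $\tilde T$ (resp.\ $\tilde T_h$) inside $\Lambda$, and since $\cE(X^{DG}(h)) = \cE(X)$ while $\cE_h(X^{DG}(h)) = \cE_h(X^{DG}_h)$, I would work with the difference $\cE - \cE_h$ acting on $X^{DG}(h)$.

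First I would bound $\delta(\cE(X), \cE_h(X^{DG}_h))$. Take any $\bsig\in\cE(X)$ with $\vertiii{\bsig}=1$; then $\bsig=\cE\bsig$, so $\cE_h\bsig\in\cE_h(X^{DG}_h)$ is a natural competitor, giving $\delta(\bsig, \cE_h(X^{DG}_h))\leq \vertiii{\bsig - \cE_h\bsig} = \vertiii{(\cE - \cE_h)\bsig}$. The key is therefore to estimate $\vertiii{(\cE - \cE_h)\bsig}$. Writing the resolvent difference via the contour integrals,
\[
(\cE - \cE_h)\bsig = \frac{1}{2\pi i}\int_{\partial\Lambda}\left[(zI - \tilde T)^{-1} - (zI - \tilde T_h)^{-1}\right]\bsig\, dz,
\]
and using the resolvent identity $(zI-\tilde T)^{-1} - (zI-\tilde T_h)^{-1} = (zI-\tilde T_h)^{-1}(\tilde T - \tilde T_h)(zI - \tilde T)^{-1}$, I would exploit the uniform boundedness of both resolvents (Lemmas~\ref{TDG} and \ref{ThDG}) for $z\in\partial\Lambda$ and $h\leq h_0$. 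The factor $(zI-\tilde T)^{-1}\bsig$ stays inside the finite-dimensional space $\cE(X)\subset\cP_k(\cT_h,\bbS)$-reachable directions where the $h$-norm $\norm{\tilde T - \tilde T_h}_h$ controls the middle factor, so this contour estimate yields $\vertiii{(\cE - \cE_h)\bsig}\lesssim \norm{\tilde T - \tilde T_h}_h$. Since $\partial\Lambda$ has fixed finite length, the integral contributes only a constant, and taking the supremum over the unit sphere of $\cE(X)$ gives $\delta(\cE(X), \cE_h(X^{DG}_h))\lesssim \norm{\tilde T - \tilde T_h}_h$.

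Next I would bound the reverse distance $\delta(\cE_h(X^{DG}_h), \cE(X))$, which is where the approximability term $\delta(\cE(X), X^{DG}_h)$ enters. Take $\bsig_h\in\cE_h(X^{DG}_h)$ with $\vertiii{\bsig_h}=1$; here one cannot directly apply $\cE$ and compare, because $\bsig_h$ need not be well-approximated by $\cE(X)$ unless $\cE(X)$ itself is well-approximated by the discrete space. The standard device is to write $\delta(\bsig_h, \cE(X))\leq \vertiii{\bsig_h - \cE\bsig_h} + \delta(\cE\bsig_h, \cE(X))$ where the second term vanishes since $\cE\bsig_h\in\cE(X)$, so again $\delta(\bsig_h,\cE(X))\leq\vertiii{(\cE_h - \cE)\bsig_h}$ after using $\bsig_h = \cE_h\bsig_h$. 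The same resolvent-identity contour estimate applies, but now the right-most factor is $(zI - \tilde T_h)^{-1}\bsig_h$, and to control $(\tilde T - \tilde T_h)$ against the $h$-norm I must first approximate the relevant argument by an element of $\cP_k(\cT_h,\bbS)$; this is precisely where the term $\delta(\cE(X), X^{DG}_h)$ arises, measuring how well the continuous eigenspace is captured by the discrete space.

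The main obstacle will be the reverse-distance estimate and the careful bookkeeping in the resolvent-identity argument, ensuring that every application of $(\tilde T - \tilde T_h)$ is paired with an argument lying in (or close to) $\cP_k(\cT_h,\bbS)$ so that the supremum defining $\norm{\tilde T - \tilde T_h}_h$ in \eqref{main} can legitimately be invoked rather than the ordinary operator norm (which need not converge). Fortunately, the entire machinery is already assembled in \cite[Section 5]{LMMR}, and the result follows from the same abstract Descloux--Nassif--Rappaz argument adapted to the DG setting; I would invoke those results to conclude, having verified that the uniform resolvent bounds (Lemmas~\ref{TDG} and \ref{ThDG}) and the convergence $\norm{\tilde T - \tilde T_h}_h\to 0$ (Corollary~\ref{T-Th}) hold in our framework.
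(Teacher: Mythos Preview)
Your overall strategy---contour integrals, the resolvent identity, and the uniform resolvent bounds of Lemmas~\ref{TDG} and~\ref{ThDG}---is correct, and your final paragraph invoking \cite[Section~5]{LMMR} is precisely what the paper does (its entire proof is the single line ``See \cite[Theorem 5.1]{LMMR}''). However, you have the roles of the two one-sided distances reversed as to where the approximability term $\delta(\cE(X), X_h^{DG})$ enters.

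In the \emph{reverse} direction $\delta(\cE_h(X_h^{DG}), \cE(X))$, take $\bsig_h\in\cE_h(X_h^{DG})$ and use the resolvent identity in the form $(zI-\tilde T)^{-1}(\tilde T-\tilde T_h)(zI-\tilde T_h)^{-1}$. Since $\tilde T_h$ maps everything into $X_h^{DG}$ and $0\notin\partial\Lambda$, one checks that $(zI-\tilde T_h)^{-1}\bsig_h\in X_h^{DG}=\cP_k(\cT_h,\bbS)$, so the discrete norm $\norm{\tilde T-\tilde T_h}_h$ applies \emph{directly}---no approximation step is needed, and this direction produces only the $\norm{\tilde T-\tilde T_h}_h$ term.

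In the \emph{forward} direction $\delta(\cE(X), \cE_h(X_h^{DG}))$, with $\bsig\in\cE(X)$, the factor $(zI-\tilde T)^{-1}\bsig$ lies in the $T$-invariant space $\cE(X)\subset X$ but is generally \emph{not} in $\cP_k(\cT_h,\bbS)$. Your phrase ``$\cE(X)\subset\cP_k(\cT_h,\bbS)$-reachable directions'' glosses over exactly this point: one must first replace $(zI-\tilde T)^{-1}\bsig$ by a nearby discrete tensor, incurring an error controlled by $\delta(\cE(X), X_h^{DG})$, before the $h$-norm can be invoked on the remainder (using the uniform boundedness of $\tilde T$ and $\tilde T_h$ on the discrepancy). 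So it is the forward direction, not the reverse, that generates the approximability term in \eqref{delta}.
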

\begin{proof}
	See \cite[Theorem 5.1]{LMMR}. 
\end{proof}

We point out that since $\cE(X)$ is a finite dimensional subspace of $X$, Remark~\ref{rem1} and Theorem~\ref{T-Th} ensure the convergence of $\gap(\cE(X), \cE_h(X^{DG}_h))$ to zero when $h\to 0$. This is the main ingredient in the proof of the following Theorem, cf. \cite[Theorem 5.2]{LMMR} for more details. 

\begin{theorem}\label{theolim}
Assume that Assumptions \ref{a0} and \ref{hyp} are satisfied. Let $\Lambda\subset \mathbb C\setminus \set{0,1}$ be an arbitrary compact set with smooth boundary $\partial \Lambda$ satisfying $\partial\Lambda \cap \sp(T) = \emptyset$. We assume that there are $m$ eigenvalues   $\eta^\Lambda_1, \ldots, \eta^\Lambda_m$ of $T$ (repeated according to their algebraic multiplicities)   contained in $\Lambda$. We also consider the eigenvalues $\eta^\Lambda_{1, h}, \ldots, \eta^\Lambda_{m(h), h}$ of $T_h:\, X^{DG}_h \to X^{DG}_h$  lying in  $\Lambda$ and repeated according to their algebraic multiplicities. Then, there exists $h_0>0$ such that $m(h) = m$ for all $h\leq h_0$  and 
\[
\lim_{h\to 0} \max_{1\leq i \leq m} |\eta_i^\Lambda -  \eta^\Lambda_{i, h}| =0.
\]
Moreover, if $\cE(X)$ is the $T$-invariant subspace of $X$ spanned by the generalized eigenfunctions corresponding to the set of eigenvalues $\set{\eta^\Lambda_{i},\ i = 1,\ldots, m}$ and $\cE_h(X^{DG}_h)$ is the $T_h$-invariant subspace of $X^{DG}_h$ spanned by the eigenspaces  corresponding to $\set{\eta^\Lambda_{i, h},\ i = 1,\ldots, m}$ then $\widehat\delta(\cE(X),\cE_h(X^{DG}_h))\to0$ as $h\to 0$.
\end{theorem}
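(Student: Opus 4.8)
The plan is to obtain the three conclusions---equality of multiplicities, convergence of the eigenvalues, and convergence of the eigenspaces---from the spectral projector framework already in place, following the Descloux--Nassif--Rappaz strategy as carried out in \cite[Theorem 5.2]{LMMR}. The point of departure is the pair of uniform resolvent bounds of Lemma~\ref{TDG} and Lemma~\ref{ThDG}: they ensure that the Dunford--Riesz contour integrals defining $\cE$ and $\cE_h$ are meaningful and bounded uniformly in $h$, so that $\cE|_X$ and $\cE_h|_{X^{DG}_h}$ are true spectral projectors onto the invariant subspaces $\cE(X)$ and $\cE_h(X^{DG}_h)$ attached to the eigenvalues of $T$ and $T_h$ lying in $\Lambda$. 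As observed just before the statement, feeding the uniform convergence $\norm{\tilde T - \tilde T_h}_h\to 0$ of Corollary~\ref{T-Th} and the approximation estimate of Remark~\ref{rem1} (which forces $\delta(\cE(X),X^{DG}_h)\to 0$, since $\cE(X)$ is a fixed finite-dimensional subspace of $X$) into the gap estimate \eqref{delta} yields
\[
  \gap(\cE(X),\cE_h(X^{DG}_h)) \longrightarrow 0 \qquad\text{as } h\to 0 .
\]
This is the engine for the whole argument, and it already establishes the final assertion of the theorem.

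First I would settle the matching of dimensions. Since the gap tends to zero, there is $h_0>0$ with $\gap(\cE(X),\cE_h(X^{DG}_h))<1$ for all $h\le h_0$. Invoking the classical property that two closed subspaces at gap strictly less than one are isomorphic, applied to the finite-dimensional spaces $\cE(X)$ and $\cE_h(X^{DG}_h)$ of respective dimensions $m$ and $m(h)$, I conclude $m(h)=m$ for $h\le h_0$.

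It then remains to prove $\max_{1\le i\le m}|\eta_i^\Lambda-\eta_{i,h}^\Lambda|\to 0$. My plan is to compare the finite-dimensional operators $T|_{\cE(X)}$ and $T_h|_{\cE_h(X^{DG}_h)}$, whose spectra are precisely $\set{\eta_i^\Lambda}$ and $\set{\eta_{i,h}^\Lambda}$. Using the smallness of the gap I would take $\Phi_h:=\cE_h|_{\cE(X)}:\cE(X)\to\cE_h(X^{DG}_h)$, which is injective (hence, by the previous step, an isomorphism) once the gap is below one and is close to the inclusion in the $\vertiii{\cdot}$ metric; I would then bound $\vertiii{\Phi_h\, T\btau - T_h\Phi_h\btau}$ for $\btau\in\cE(X)$ in terms of $\norm{\tilde T - \tilde T_h}_h$ and of $\gap(\cE(X),\cE_h(X^{DG}_h))$. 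Writing both operators as matrices in a basis of $\cE(X)$ transported by $\Phi_h$ and using the continuous dependence of the eigenvalues on the matrix entries then gives the convergence of the eigenvalues, with the correct pairing guaranteed by $m(h)=m$.

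The step I expect to be the main obstacle is exactly this transport argument, since it has to be performed in the nonconforming setting $X^{DG}_h\not\subset X$, where the only available comparison is through the mesh-dependent norm $\vertiii{\cdot}$ and the operator norm $\norm{\cdot}_h$ of \eqref{main}. Two dangers must be excluded: that some exact eigenvalue fails to be approximated (completeness), and that a spurious discrete eigenvalue migrates into $\Lambda$. Both are ruled out by the uniform resolvent bound of Lemma~\ref{ThDG}, which keeps $zI-\tilde T_h$ uniformly invertible for $z\in\partial\Lambda$, together with the gap convergence, which prevents the discrete invariant subspace from either collapsing or overshooting $\cE(X)$.
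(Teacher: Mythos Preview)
Your proposal is correct and follows essentially the same route as the paper: the paper does not spell out a proof here but points to \cite[Theorem~5.2]{LMMR}, having already noted that the gap $\gap(\cE(X),\cE_h(X^{DG}_h))\to 0$ (from \eqref{delta}, Corollary~\ref{T-Th}, and Remark~\ref{rem1}) is the main ingredient. Your sketch---gap convergence $\Rightarrow$ equal dimensions via Kato's lemma, then transport of $T|_{\cE(X)}$ to $T_h|_{\cE_h(X^{DG}_h)}$ through $\cE_h|_{\cE(X)}$ and continuity of eigenvalues in finite dimension---is precisely the Descloux--Nassif--Rappaz argument that \cite{LMMR} carries out in the nonconforming $\vertiii{\cdot}$ setting.
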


\subsection{Error estimates for eigenvalues and eigenfunctions}

Theorem~\ref{theolim} guaranties that the discontinuous Galerkin scheme \eqref{Sh} does not pollute the spectrum of $T$ with  spurious modes. Moreover, it proves the  convergence of eigenvalues and eigenfunctions with correct multiplicity. However, in practice the space $\cE_\eta(X)$ of generalized eigenfunctions corresponding to a given isolated eigenvalue $\eta\neq 1$ enjoys individual smoothness properties and the term $\norm{\tilde T - T_h}_h$ in \eqref{delta} prevents from taking advantage of this specific regularity. For this reason, we are going to show now that the gap between the continuous and discrete eigenspaces corresponding to a particular eigenvalue $\eta\neq 1$ can be bounded only in terms of
\[
\delta^* (\cE(X) , X^{DG}_h):= \sup_{\bsig\in \cE(X), \norm{\bsig}_X=1}\inf_{\btau_h\in X^{DG}_h} \vertiii{\bsig - \btau_h}_*.
\]

Hence, hereafter we focus on a particular isolated eigenvalue $\eta\neq 1$ of $T$ of algebraic multiplicity $m$  and let $D_\eta\subset \mathbb C$ be a closed disk centered at $\eta$ with  boundary $\gamma$ such that $D_\eta \cap \sp(T) = \set{\eta}$. We denote by $\cE_\eta:=\frac{1}{2\pi i} \displaystyle\int_{\gamma}\left(z I- \tilde T\right)^{-1}\, dz: X \rightarrow X$ the projector onto the eigenspace $\cE_\eta(X)$ of $\eta$ and we define, for $h$ small enough, the projector by 
 $\cE_{\eta, h}:=\frac{1}{2\pi i} \displaystyle\int_{\gamma}\left(z I- \tilde T_h\right)^{-1}\, dz: X^{DG}_h \rightarrow X^{DG}_h$ onto the $T_h$-invariant subspace $\cE_{\eta, h}(X^{DG}_h)$  corresponding to the $m$ eigenvalues of $T_h:\, X^{DG}_h \to X^{DG}_h$ contained in $\gamma$. A straightforward adaptation of \cite[Theorem 6.1]{LMMR} gives the following result. 
\begin{theorem}
	Assume that Assumptions \ref{a0} and \ref{hyp} are satisfied. For $h$ small enough, there exists a constant $C$ independent of $h$ such that 
\begin{equation}\label{aste}
\gap\big(\cE_\eta(X), \cE_{\eta,h}(X^{DG}_h) \big)\leq C \delta^* (\cE_\eta(X) , X^{DG}_h).
\end{equation}
\end{theorem}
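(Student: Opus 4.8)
The plan is to split the gap into its two one-sided distances and to reduce everything to a single best-approximation quantity measured in the strong norm. The decisive structural fact I will exploit is that $\eta\neq 1$ has ascent one (Theorem~\ref{specT}), so that every $\bsig\in\cE_\eta(X)$ is a genuine eigenfunction, $\tilde T\bsig=\eta\bsig$. Together with the C\'ea estimate of Theorem~\ref{cea}, this is precisely what will convert the operator error into an approximation error measured in the $\vertiii{\cdot}_*$-norm that defines $\delta^*$, rather than the coarser quantity $\norm{\tilde T-\tilde T_h}_h$.

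First I would bound $\delta\big(\cE_\eta(X),\cE_{\eta,h}(X^{DG}_h)\big)$. Fix $\bsig\in\cE_\eta(X)$ with $\norm{\bsig}_X=\vertiii{\bsig}=1$ and choose $\btau_h\in X^{DG}_h$ nearly attaining $\inf_{\btau_h}\vertiii{\bsig-\btau_h}_*\leq\delta^*(\cE_\eta(X),X^{DG}_h)$. Since $\cE_{\eta,h}\btau_h\in\cE_{\eta,h}(X^{DG}_h)$ is an admissible competitor and $\cE_\eta\bsig=\bsig$, I would write
\[
\bsig-\cE_{\eta,h}\btau_h=(\cE_\eta-\cE_{\eta,h})\bsig+\cE_{\eta,h}(\bsig-\btau_h).
\]
The second summand is controlled by the uniform boundedness of $\cE_{\eta,h}$ on $X^{DG}(h)$ (a consequence of Lemma~\ref{ThDG}) together with $\vertiii{\cdot}\leq\vertiii{\cdot}_*$, giving $\vertiii{\cE_{\eta,h}(\bsig-\btau_h)}\leq C\,\delta^*$.

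For the first summand I would use the resolvent representation of the spectral projectors. Writing $R(z):=(zI-\tilde T)^{-1}$ and $R_h(z):=(zI-\tilde T_h)^{-1}$, the resolvent identity $R(z)-R_h(z)=R_h(z)(\tilde T-\tilde T_h)R(z)$ together with the eigenrelation $R(z)\bsig=(z-\eta)^{-1}\bsig$ yields
\[
(\cE_\eta-\cE_{\eta,h})\bsig=\frac{1}{2\pi i}\int_\gamma(z-\eta)^{-1}R_h(z)(\tilde T-\tilde T_h)\bsig\,dz.
\]
Since $|z-\eta|$ is constant on $\gamma$ and, by Lemma~\ref{ThDG}, $R_h(z)$ is uniformly bounded on $X^{DG}(h)$ in the $\vertiii{\cdot}$-norm for $z\in\gamma$ and $h$ small, this reduces the first summand to $C\,\vertiii{(\tilde T-\tilde T_h)\bsig}$. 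Here comes the crux: applying Theorem~\ref{cea} with $\bF=\bsig$ and using $\tilde T\bsig=\eta\bsig$,
\[
\vertiii{(\tilde T-\tilde T_h)\bsig}\leq(1+2M)\inf_{\btau_h\in X^{DG}_h}\vertiii{\eta\bsig-\btau_h}_*=(1+2M)\,|\eta|\inf_{\btau_h\in X^{DG}_h}\vertiii{\bsig-\btau_h}_*\leq C\,\delta^*(\cE_\eta(X),X^{DG}_h).
\]
Collecting the two contributions and taking the supremum over the unit sphere of $\cE_\eta(X)$ gives $\delta\big(\cE_\eta(X),\cE_{\eta,h}(X^{DG}_h)\big)\leq C\,\delta^*$.

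Finally, to promote this one-sided bound to the full gap, I would invoke the equality of dimensions $\dim\cE_\eta(X)=\dim\cE_{\eta,h}(X^{DG}_h)=m$, valid for $h$ small (Theorem~\ref{theolim} with $\Lambda=D_\eta$), and the elementary fact from perturbation theory that, for two finite-dimensional subspaces of equal dimension with $\delta(E,F)<1$, one has $\delta(F,E)\leq\delta(E,F)/(1-\delta(E,F))$. As $\delta\big(\cE_\eta(X),\cE_{\eta,h}(X^{DG}_h)\big)\leq C\,\delta^*\to 0$, this is below $1/2$ for $h$ small, whence $\gap\leq C\,\delta^*$, which is \eqref{aste}. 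The main obstacle is exactly this crux step: it is essential that $\eta$ has ascent one, so that $\tilde T\bsig$ is a scalar multiple of $\bsig$ and the C\'ea estimate delivers a best-approximation error in the strong $\vertiii{\cdot}_*$-norm; were $\eta$ to carry a nontrivial Jordan structure, one would have to track generalized eigenfunctions and the clean reduction to $\delta^*$ would break down.
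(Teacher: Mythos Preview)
Your proof is correct and follows the standard route of the reference the paper defers to (\cite[Theorem~6.1]{LMMR}): the resolvent identity together with the eigenrelation $\tilde T\bsig=\eta\bsig$ (ascent one) and the C\'ea estimate \eqref{Cea} yields the $\vertiii{\cdot}_*$-bound for the forward gap, while the equal-dimension argument closes the reverse one. One minor simplification: the auxiliary $\btau_h$ is unnecessary, since taking $\cE_{\eta,h}\bsig\in\cE_{\eta,h}(X^{DG}_h)$ directly as competitor already gives $\delta(\bsig,\cE_{\eta,h}(X^{DG}_h))\le\vertiii{(\cE_\eta-\cE_{\eta,h})\bsig}$, after which the rest of your argument applies unchanged.
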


We conclude with the following rates of convergence for eigenfunctions and eigenvalues.
\begin{theorem}\label{errorE}
	Assume that Assumptions \ref{a0} and \ref{hyp} are satisfied. Let $r>0$ be such that $\cE_\eta(X) \subset \set*{ \btau\in H^r(\cup_j\Omega_j,\bbM);\ \bdiv\btau\in H^{1+r}(\cup_j\Omega_j, \R^d) }$. Then, there exists $C>0$ independent of $h$ such that, for $h$ small enough,
\begin{equation}\label{eigenspace}
\gap\big(\cE_\eta(X), \cE_{\eta,h}(X^{DG}_h) \big)\leq C h^{\min\{r,k\}}. 
\end{equation}
Moreover, there exists $C'>0$ independent of $h$ such that
\begin{equation}\label{eigenvalue}
\displaystyle\max_{1\leq i\leq m} |\kappa - \kappa_{i, h}|\leq  C' \, h^{2\min\{r,k\}},
\end{equation}
where $\kappa:= 1/\eta$ and $\kappa_{i,h}:= 1/\eta_{i,h}$, $i=1,\ldots, m$.
\end{theorem}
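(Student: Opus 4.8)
The plan is to prove the two estimates in turn: the eigenspace bound \eqref{eigenspace} is obtained from the abstract gap estimate \eqref{aste} by a term-by-term approximation argument, and the double-order eigenvalue bound \eqref{eigenvalue} is obtained from a Rayleigh-quotient identity that crucially uses the consistency of the scheme.

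First I would establish \eqref{eigenspace}. By \eqref{aste} it suffices to show $\delta^*(\cE_\eta(X),X^{DG}_h)\lesssim h^{\min\{r,k\}}$, so for a fixed $\bsig\in\cE_\eta(X)$ I would take the competitor $\btau_h=\cJ^s_h\bsig\in X_h^c\subset X$ and estimate $\vertiii{\bsig-\cJ^s_h\bsig}_*$ term by term. Since both $\bsig$ and $\cJ^s_h\bsig$ lie in $X$, their normal traces match, so the jump contribution $\norm{\varrho_\cF^{-1/2}h_\cF^{-1/2}\jump{\bsig-\cJ^s_h\bsig}}_{0,\cF^*_h}$ vanishes identically. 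The $\cA$-term is controlled through the quasi-optimality of Corollary~\ref{inter}~i) together with the BDM estimate \eqref{asymp}, giving a contribution of order $h^{\min\{r,k+1\}}$. For the two divergence contributions I would use the commuting relation $\bdiv_h(\bsig-\cJ^s_h\bsig)=(I-Q_h^{k-1})\bdiv\bsig$ of Corollary~\ref{inter}~ii): the volume term is bounded by the $L^2$-projection estimate behind \eqref{asympDiv}, and the face-average term $\norm{\varrho_\cF^{1/2}h_\cF^{1/2}\mean{\frac1\varrho(I-Q_h^{k-1})\bdiv\bsig}}_{0,\cF^*_h}$ by the scaled trace argument already used for \eqref{mo2}, now exploiting $\bdiv\bsig\in H^{1+r}(\cup_j\Omega_j,\R^d)$; both are of order $h^{\min\{1+r,k\}}$, hence bounded by $h^{\min\{r,k\}}$. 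Because $\cE_\eta(X)$ is finite dimensional, the regularity norms $\norm{\bsig}_{r,\Omega_j}+\norm{\bdiv\bsig}_{1+r,\Omega_j}$ are uniformly equivalent to $\norm{\bsig}_X$ on it, so the supremum defining $\delta^*$ inherits the rate $h^{\min\{r,k\}}$ and \eqref{eigenspace} follows.

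The core of \eqref{eigenvalue} is an exact identity. The decisive observation is that the scheme is consistent: inserting $\bF=\bsig$ (an eigenfunction, $\tilde T\bsig=\eta\bsig$) into \eqref{consistency} and using \eqref{charcTDG} yields $a_h(\bsig,\btau_h)=\kappa(\bsig,\btau_h)_\cA$ for every $\btau_h\in X^{DG}_h$, where $\kappa=1/\eta$. Combining this with $a_h(\bsig,\bsig)=a(\bsig,\bsig)=\kappa\norm{\bsig}_\cA^2$ (legitimate because the jumps of the conforming $\bsig$ vanish, so $c_h(\bsig,\bsig)=c(\bsig,\bsig)$) and with the discrete equation $a_h(\bsig_{i,h},\bsig_{i,h})=\kappa_{i,h}\norm{\bsig_{i,h}}_\cA^2$, I would expand $a_h(\be,\be)$ and $(\be,\be)_\cA$ for $\be:=\bsig-\bsig_{i,h}$ and cross-cancel to reach
\[
(\kappa_{i,h}-\kappa)\,\norm{\bsig_{i,h}}_\cA^2=a_h(\be,\be)-\kappa\,(\be,\be)_\cA ,
\]
where $\bsig\in\cE_\eta(X)$ may be chosen freely; I would pick the element realizing the gap to $\bsig_{i,h}$. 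Normalizing $\vertiii{\bsig_{i,h}}=1$, the coercivity of Proposition~\ref{SPD} gives $a_h(\bsig_{i,h},\bsig_{i,h})\ge\tfrac12$, hence $\norm{\bsig_{i,h}}_\cA^2=a_h(\bsig_{i,h},\bsig_{i,h})/\kappa_{i,h}\ge c>0$ uniformly, while the continuity estimate $|a_h(\be,\be)|\le 2\vertiii{\be}_*^2$ and $(\be,\be)_\cA\le\vertiii{\be}_*^2$ turn the identity into $|\kappa-\kappa_{i,h}|\lesssim\vertiii{\be}_*^2$. Writing $\be=(\bsig-\btau_h)+(\btau_h-\bsig_{i,h})$ with $\btau_h=\cJ^s_h\bsig$ and noting that on the purely polynomial difference $\btau_h-\bsig_{i,h}\in X^{DG}_h$ the discrete trace inequality (Lemma~\ref{TraceDG}) gives $\vertiii{\btau_h-\bsig_{i,h}}_*\lesssim\vertiii{\btau_h-\bsig_{i,h}}$, I would bound $\vertiii{\be}_*\lesssim\vertiii{\bsig-\btau_h}_*+\vertiii{\bsig-\bsig_{i,h}}$. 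The first summand is $\lesssim h^{\min\{r,k\}}$ by the termwise estimates above and the second is $\lesssim h^{\min\{r,k\}}$ by \eqref{eigenspace}; squaring and taking the maximum over the finitely many indices $i$ yields \eqref{eigenvalue}.

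The main obstacle is securing the double order, i.e.\ ensuring that no term of order $h^{\min\{r,k\}}$ survives in the eigenvalue identity. This hinges entirely on the consistency relation $a_h(\bsig,\btau_h)=\kappa(\bsig,\btau_h)_\cA$ and on the equality $a_h(\bsig,\bsig)=a(\bsig,\bsig)$; without them the identity would carry a linear consistency defect and only the single rate would follow. A secondary technical point is that the identity naturally produces the stronger norm $\vertiii{\cdot}_*$, so the passage from $\vertiii{\be}_*$ back to quantities already controlled at rate $h^{\min\{r,k\}}$ must be routed through the discrete trace inequality on the polynomial part and the $\vertiii{\cdot}_*$-approximation of $\cJ^s_h$ on the conforming part.
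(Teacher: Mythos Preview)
Your argument is correct and follows the paper's strategy: bound $\delta^*(\cE_\eta(X),X_h^{DG})$ by choosing a concrete interpolant and estimating the three contributions to $\vertiii{\cdot}_*$, then use finite dimensionality of $\cE_\eta(X)$; for \eqref{eigenvalue} you spell out the Rayleigh-quotient identity that the paper merely delegates to \cite[Theorem~6.2]{LMMR}.

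The one substantive difference is your choice of competitor. The paper takes $\btau_h=\Pi^{\texttt{BDM}}_h\bsig$ and bounds $\vertiii{\bsig-\Pi^{\texttt{BDM}}_h\bsig}_*$ directly via \eqref{asymp}--\eqref{asympDiv}; you take $\btau_h=\cJ^s_h\bsig$ and route the $\cA$-term through the quasi-optimality of Corollary~\ref{inter}~i) before invoking \eqref{asymp}. Your choice is actually tidier: $\cJ^s_h\bsig\in X_h^c\subset X_h^{DG}$ by construction, whereas $\Pi^{\texttt{BDM}}_h\bsig$ is in general not symmetric and hence not literally in $X_h^{DG}=\cP_k(\cT_h,\bbS)$, so the paper's inequality $\inf_{\btau_h\in X_h^{DG}}\vertiii{\bsig-\btau_h}_*\le\vertiii{\bsig-\Pi^{\texttt{BDM}}_h\bsig}_*$ tacitly requires a symmetrization step (e.g.\ via $\cS_h$). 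Both routes use the same commuting identity $\bdiv_h(\bsig-\btau_h)=(I-Q_h^{k-1})\bdiv\bsig$ and the same face-scaling argument, and both yield the same rate $h^{\min\{r,k\}}$.
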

\begin{proof}
	For any $\bsig \in \cE_\eta(X)$, taking into account \eqref{commuting}, it holds
	\begin{align*}
		\inf_{\btau_h\in X^{DG}_h} &\vertiii{\bsig - \btau_h}_* \leq \vertiii{\bsig - \Pi^{\texttt{BDM}}_h\bsig}_* 
		\\[1ex]
		&= \left( \norm*{(I - \Pi^{\texttt{BDM}}_h)\bsig}^2_{\cA} + \norm*{\tfrac{1}{\sqrt \varrho}(I - Q_h^{k-1}) \bdiv\bsig}^2_{0,\Omega} + \norm*{\varrho_\cF^{\frac{1}{2}}h_\cF^{\frac{1}{2}} \mean{\tfrac{1}{\varrho}(I - Q_h^{k-1}) \bdiv\bsig}  }^2_{\cF_h^*}\right)^{1/2}.
	\end{align*}
Using \eqref{asymp}, \eqref{asympDiv} for the first and second terms of the last identity, respectively, and employing classical scaling arguments for the last one we deduce that 
\[
  \inf_{\btau_h\in X^{DG}_h} \vertiii{\bsig - \btau_h}_* \lesssim h^{\min\{r,k\} }\left(  \sum_{j=1}^J  \norm*{\bsig}^2_{r,\Omega_j} + \norm*{\bdiv \bsig}^2_{r,\Omega_j} \right)^{1/2},
\] 
and \eqref{eigenspace} follows from the fact that $\cE_\eta(X)$ is a finite dimensional subspace of $X$. 

The deduction of \eqref{eigenvalue} from \eqref{eigenspace} is obtained in the classical way, see  \cite[Theorem 6.2]{LMMR} for more details.
\end{proof}

\begin{remark}
	The displacement field corresponding to a given $\bsig \in \cE_\eta(X)$ is $\bu = - \frac{1}{\varrho (\kappa-1) }\bdiv \bsig$. Let us associate to $\bu$ the discrete displacement $\bu_h:= - \frac{1}{\varrho (\kappa_{h}-1) }\bdiv_h \bsig_h$, where $\kappa_{h}:= (\sum_{i=1}^m \kappa_{i,h})/m$. By virtue of the triangle inequality and Theorem~\ref{errorE}, for $h$ small enough, we have the error estimate  
	\begin{align*}
		\norm*{\bu - \bu_h}_{0,\Omega} \leq  \frac{1}{\varrho^-} \Big( \frac{1}{\kappa-1} \norm*{\bdiv_h(\bsig - \bsig_h)}_{0,\Omega} + \left| \frac{1}{\kappa - 1} - \frac{1}{\kappa_{h} - 1} \right| \norm*{\bdiv_h\bsig_h}_{0,\Omega} \Big)
		 \lesssim h^{\min\{r,k\}}.
	\end{align*}
 
\end{remark}

\section{Numerical results}\label{NR}\label{sec6}

We point out that the inf-sup condition \eqref{infsupS} corresponding to the Scott--Vogelius element is only known to be satisfied under certain conditions on the mesh and the polynomial degree $k$. Namely, in two dimensions, Assumption~\ref{hyp} holds true on shape-regular triangulations with no singular vertices for $k\geq 3$, cf. \cite{vogelius} and in dimendion three, it is satisfied  on uniform simplicial meshes for $k \geq 5$  \cite{zhang1}. For lower values $1 \leq k < 2d -1$ of the polynomial degree, we can ensure Assumption~\ref{hyp} by considering shape-regular meshes with barycentric refinements (Alfeld splits), see \cite {qin, zhang}. Similar results have been proved for meshes of Powell–Sabin type \cite{zhang2, zhang3}, which wil not be employed here.

 To our knowledge, the stability results mentioned so far for the Scott--Vogelius element have only be obtained for homogeneous Dirichlet or Neumann boundary conditions on $\Gamma$.  However, there is numerical evidence that the stability and optimal accuracy of this finite element method also occur on barycentric refinements of shape-regular meshes when mixed boundary conditions are imposed \cite{olshanskii, scott}.  
 
 In what follows, we say that $\cT^{\mathtt{bary}}_h$ is a simplicial barycentric ($d + 1$)-sected mesh of size $h$ if $\cT^{\mathtt{bary}}_h$ is obtained after refinement of a shape-regular simplicial mesh $\cT_h$ of size $h$ by subdividing each simplex in $\cT_h$ into $d+1$ sub-simplices by connecting the barycenter with the $d + 1$ vertices, see Figure~\ref{mesh}. 
 
 The numerical results presented in this section have been implemented using the finite element library \texttt{Netgen/NGSolve} \cite{ngsolve}.

\begin{figure}[h]
\begin{center}
\begin{minipage}{6cm}
\centerline{\includegraphics[height=6cm, angle=0]{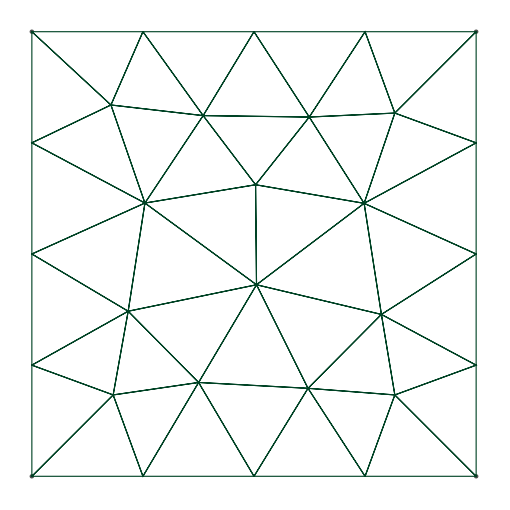}}
\end{minipage}
\begin{minipage}{6cm}
\centerline{\includegraphics[height=6cm, angle=0]{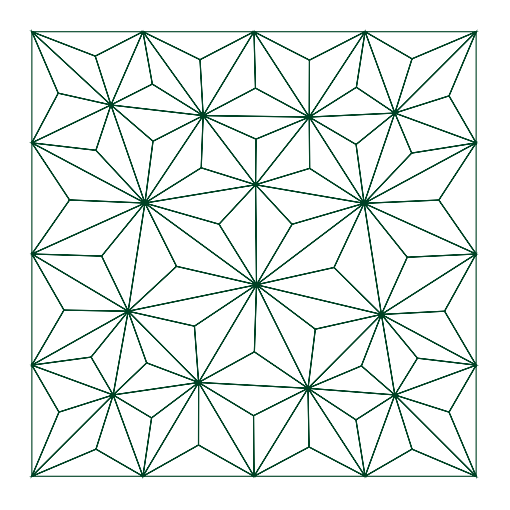}}
\end{minipage}
\caption{An unstructured shape-regular mesh $\cT_h$ of mesh size $h=1/4$ (left) and the corresponding barycentric refinement $\cT_h^{\mathtt{bary}}$ (right).}
\label{mesh}
\end{center}
\end{figure}

\medskip
\noindent\textbf{Example 1: Spectral correctness of the DG scheme in two dimensions.}  We assume that problem \eqref{modelPb} is posed in the unit square $\Omega = (0,1)^2$ and let the  compliance  tensor $\cA$ be given by Hooke's law
\begin{equation}\label{hook} 
\cA \btau = \frac{1}{2\mu}\btau - \frac{\lambda}{2\mu(d\lambda + 2\mu  )} \tr{\btau} I,\end{equation}
where $\lambda$ and $\mu$  are the Lam\'e coefficients. We select in this example constant values for the mass density $\varrho = 1$, Young's modulus $E=1$ and Poisson's ratio $\nu=0.35$. We recall that the Lam\'e coefficients are related to $E$ and $\nu$ by
\begin{equation*}
\label{LAME}
\lambda:=\frac{E\nu}{(1+\nu)(1-2\nu)}
\qquad\text{and}\qquad 
\mu:=\frac{E}{2(1+\nu)}.
\end{equation*}   
We assume that the solid is fixed at the bottom side $\Gamma_D = (0,1)\times \{0\}$ of the square  and free of stress on the remaining  three sides $\Gamma_N= \Gamma \setminus \Gamma_D$. 

\begin{table}[!htb]
\centering
\begin{minipage}{.45\textwidth}
\centering
\begin{tabular}{  c c c  }
\toprule
\multicolumn{3}{c}{$k=1$}\\

 $\mathtt{a}_0=4$  &  $\mathtt{a}_0=8$ &   $\mathtt{a}_0=16$   \\ 
\midrule
0.678702		&0.679772		& 0.680201\\
1.695659		&1.697598		& 1.698374\\
1.816904		&1.819964		& 1.821196\\
\textbf{2.639076}		&2.940891		& 2.944729\\
\textbf{2.837896}		&3.014975		& 3.016507\\
2.931343		&\textbf{3.401037}		& 3.442011\\
3.011368		&3.440911		& 4.138696\\
\textbf{3.108155}		&\textbf{3.651704}		& \textbf{4.157343}\\
3.417512		&\textbf{4.007783}		& \textbf{4.453139}\\
\textbf{3.441640}		&4.134962		& 4.628728\\

\bottomrule            
\end{tabular}
        \caption{Lowest natural frequencies on an unstructured  shape-regular triangulation $\cT_h$ of mesh size $h=1/64$.}
        \label{table1a}
    \end{minipage}\qquad %
    \begin{minipage}{0.45\textwidth}
        \centering
        \begin{tabular}{c c c  }
\toprule
\multicolumn{3}{c}{$k=1$}\\
  $\mathtt{a}_0=2$  &  $\mathtt{a}_0=4$ &   $\mathtt{a}_0=8$   \\ 
 \midrule
0.677490	&0.677848	&0.679280 \\
1.693684	&1.694236	&1.696863 \\
1.811491	&1.814970	&1.819734 \\
2.910592	&2.925604	&2.940824 \\
3.002259	&3.008271	&3.014647 \\
3.425010	&3.432744	&3.440763 \\
4.091722	&4.118359	&4.137677 \\
4.587870	&4.609321	&4.626700 \\
4.745053	&4.769440	&4.786882 \\
4.717068	&4.740651	&4.758608 \\
\bottomrule               
\end{tabular}
        \caption{Lowest natural frequencies on a barycentric refinement of  a shape-regular triangulation $\cT^{\mathtt{bary}}_h$ of mesh size $h=1/16$.}
        \label{table1b}
    \end{minipage}
\end{table}

We report in Table~\ref{table1a} the 10 smallest vibration frequencies $\omega_{hi}:=\sqrt{\kappa_{hi}-1}$ obtained by solving the DG scheme \eqref{Sh} on an unstructured shape-regular triangulation $\cT_h$ of mesh size $h=1/64$ at the lowest order $k=1$ and for stability parameters $\mathtt{a}=\mathtt{a}_0 k^2$, with $\texttt{a}_0:=4,8,16$. We observe spurious eigenvalues (the numbers in bold font) emerge at random positions, which indicates that the approximation is not spectrally correct. To ensure Assumption~\ref{hyp}, we solve now \eqref{Sh}   on a barycentric trisected mesh $\cT_h^{\mathtt{bary}}$ of size $h=1/16$. The results displayed in Table~\ref{table1b} (for $k=1$ and for stabilization parameters $\mathtt{a}=\mathtt{a}_0 k^2$, $\texttt{a}_0:=2,4,8$)  provide correct eigenfrequencies.

\begin{table}[!htb]
\centering
\begin{minipage}{.45\textwidth}
 \centering
\begin{tabular}{  c c c  }
\toprule
\multicolumn{3}{c}{$k=2$}\\
 $\mathtt{a}_0=2$  &  $\mathtt{a}_0=4$ &   $\mathtt{a}_0=8$   \\ 
\midrule
0.676422	&0.678614	&0.679493 \\
1.691646	&1.695661	&1.697263 \\
1.811504	&1.817566	&1.820003 \\
2.915251	&2.933614	&2.941019 \\
3.004017	&3.011501	&3.014480 \\
3.428332	&3.436800	&3.440173 \\
4.111159	&4.128585	&4.135536 \\
\textbf{4.561698}	&4.618693	&4.625052 \\
4.603006	&4.749476	&4.755874 \\
4.733581	&4.777890	&4.783112 \\
\bottomrule            
\end{tabular}
        \caption{Lowest natural frequencies on an unstructured  shape-regular triangulation $\cT_h$ of mesh size $h=1/16$.}
        \label{table2a}
    \end{minipage}\qquad %
    \begin{minipage}{0.45\textwidth}
        \centering
\begin{tabular}{c c c  }
\toprule
\multicolumn{3}{c}{$k=2$}\\
  $\mathtt{a}_0=2$  &  $\mathtt{a}_0=4$ &   $\mathtt{a}_0=8$   \\ 
 \midrule
0.679380	&0.680102	&0.680389 \\
1.696791	&1.698115	&1.698640 \\
1.818694	&1.820700	&1.821497 \\
2.937186	&2.943178	&2.945560 \\
3.013475	&3.015922	&3.016894 \\
3.438312	&3.441135	&3.442255 \\
4.131920	&4.137557	&4.139793 \\
4.621689	&4.627024	&4.629140 \\
4.752329	&4.757616	&4.759720 \\
4.780796	&4.785130	&4.786851 \\
\bottomrule               
\end{tabular}
        \caption{Lowest natural frequencies on a barycentric refinement of  a shape-regular triangulation $\cT^{\mathtt{bary}}_h$ of mesh size $h=1/16$.}
        \label{table2b}
    \end{minipage}
\end{table}

We repeat the same experiment by solving \eqref{Sh} with quadratic polynomial order. We employ an unstructured shape-regular  triangulation $\cT_h$ of size $h=1/16$ in Table~\ref{table2a} and a barycentric trisected mesh $\cT_h^{\mathtt{bary}}$ of size $h=1/16$ in Table~\ref{table2b}. Only one spurious eigenfrequency shows up in Table~\ref{table2a} among the first 10 eigenvalues in the case $\texttt{a}_0=2$. The DG method seems to provide a spectrally correct quadratic approximation on $\cT_h$ for $\mathtt{a}_0$ sufficiently large,  even though Assumption~\ref{hyp} is not known to be satisfied on $\cT_h$ for $k=2$.

\begin{table}[!htb]
\centering
\begin{minipage}{.45\textwidth}
 \centering
\begin{tabular}{  c c c  }
\toprule
\multicolumn{3}{c}{$k=3$}\\
 $\mathtt{a}_0=2$  &  $\mathtt{a}_0=4$ &   $\mathtt{a}_0=8$   \\ 
\midrule
0.676520	&0.678355	&0.679133 \\
1.692090	&1.695468	&1.696897 \\
1.812994	&1.818054	&1.820203 \\
2.919900	&2.935216	&2.941725 \\
3.004835	&3.011233	&3.013935 \\
3.430379	&3.437405	&3.440382 \\
4.115109	&4.129772	&4.135970 \\
4.605120	&4.619032	&4.624917 \\
\bottomrule            
\end{tabular}
        \caption{Lowest natural frequencies on an unstructured  shape-regular triangulation $\cT_h$ of mesh size $h=1/16$.}
        \label{table3}
    \end{minipage}\qquad %
    \begin{minipage}{0.45\textwidth}
        \centering
\begin{tabular}{c c c  }
\toprule
\multicolumn{3}{c}{$k=4$}\\
  $\mathtt{a}_0=2$  &  $\mathtt{a}_0=4$ &   $\mathtt{a}_0=8$   \\ 
 \midrule
0.678431	&0.679425	&0.679859 \\
1.695330	&1.697156	&1.697950 \\
1.817152	&1.819895	&1.821092 \\
2.932397	&2.940705	&2.944336 \\
3.010780	&3.014227	&3.015725 \\
3.436205	&3.440011	&3.441666 \\
4.127128	&4.135079	&4.138538 \\
4.616777	&4.624336	&4.627618 \\
\bottomrule               
\end{tabular}
        \caption{Lowest natural frequencies on an unstructured  shape-regular triangulation $\cT_h$ of mesh size $h=1/16$.}
        \label{table4}
    \end{minipage}
\end{table}

We finish this series of tests by reporting in Table~\ref{table3} and Table~\ref{table4} the eigenfrequencies obtain by solving \eqref{Sh} for $k=3$ and $k=4$, respectively. For these values of the polynomial order $k$, Assumption~\ref{hyp} is satisfied on unstructured shape-regular meshes $\cT_h$. We take $h=1/16$ and let  $\mathtt{a}=\mathtt{a}_0 k^2$ for $\texttt{a}_0:=2,4,8$ in each case. As expected, all the computed eigenfrequencies are correct. 

\medskip
\noindent\textbf{Example 2: Spectral correctness of the DG scheme in three dimensions.} We consider a solid represented by the unit cube $\Omega = (0,1)^3$ and impose a Dirichlet boundary condition on the whole boundary $\Gamma_D = \Gamma$. We maintain the same expression \eqref{hook} for the compliance tensor $\cA$ and the same constant coefficients $\rho=1$, $E=1$ and $\nu =0.35$ used in the previous example. 

Only spurious eigenvalues appeared when solving problem \eqref{Sh} on unstructured shape-regular simplicial partitions $\cT_h$ for $k=2,3,4$, for a wide range of parameters $\mathtt{a}=\mathtt{a}_0 k^2$, and for the largest number of degrees of freedom allowed by our computational capacity in each case. To guarantee Assumption~\ref{hyp}, we solved \eqref{Sh} on a quadrisected barycentric mesh $\cT_h^{\mathtt{bary}}$ of size $h=1/4$ and reported the results in Tables \ref{table3Da} and \ref{table3Db}.  The results displayed in Table~\ref{table3Da} indicate the spectral correctness of the DG scheme of quadratic order for $\mathtt{a}$ large enough. Finally, we list in Table~\ref{table3Db} the first natural frequencies obtained by solving \eqref{Sh} at cubic and quartic order with  $\mathtt{a}= 8k^2$. 
 
\begin{table}[!htb]
\centering
\begin{minipage}{.45\textwidth}
 \centering
\begin{tabular}{  c c c  }
\toprule
\multicolumn{3}{c}{$k=2$}\\
 $\mathtt{a}_0=4$  &  $\mathtt{a}_0=8$ &   $\mathtt{a}_0=16$   \\ 
\midrule
4.459599	&4.462669	&4.463426 \\
4.459644	&4.462324	&4.463592 \\
4.459717	&4.462452	&4.463901 \\
\textbf{4.735649}	&4.783271	&4.787701 \\
\textbf{4.856537}	&4.784776	&4.789726 \\
4.774039	&4.785016	&4.790044 \\
4.772765	&5.820485	&5.831800 \\
4.772953	&5.834638	&5.829005 \\
\textbf{5.188265}	&6.027316	&6.030225 \\
\textbf{5.288405}	&6.015911	&6.034207 \\
\bottomrule            
\end{tabular}
        \caption{Lowest natural frequencies on a barycentric refinement of  a shape-regular triangulation of mesh size $h=1/4$.}
        \label{table3Da}
    \end{minipage}\qquad %
    \begin{minipage}{0.45\textwidth}
        \centering
\begin{tabular}{c c   }
\toprule
\multicolumn{2}{c}{$\mathtt{a}_0=8$}\\
  $k=3$  &  $k=4$    \\ 
 \midrule
4.460305	&4.460220	 \\
4.460295	&4.460222	 \\
4.460286	&4.460221	 \\
4.770938	&4.770735	 \\
4.770963	&4.770734	 \\
4.770972	&4.770732	 \\
5.805414	&5.804214	 \\
5.881658	&5.804351	 \\
6.014326	&6.013368	 \\
6.014811	&6.017531	 \\
\bottomrule               
\end{tabular}
        \caption{Lowest natural frequencies on a barycentric refinement of  a shape-regular triangulation of mesh size $h=1/4$.}
        \label{table3Db}
    \end{minipage}
\end{table}

\medskip
\noindent\textbf{Example 3: Accuracy verification and stability in the nearly incompressible limit.} We have seen through Example 1 and Example 2 that, if Assumption~\ref{a0} and  Assumption~\ref{hyp}  are met, the DG scheme \eqref{Sh} does not pollute the spectrum of $T$ with spurious modes. The aim now is to confirm that  the   eigenvalues converge at the expected rate and with correct multiplicity.

We take $\varrho=1$, $E=1$ and $\cA$ given by \eqref{hook} and we let $\Gamma_D = \Gamma$. We observe that in the limit $\lambda \to \infty$ (or $\nu \to 0.5$), the eigenvalues of \eqref{S} converge to the eigenvalues of the following perfectly incompressible elasticity eigenproblem  (see \cite[Appendix 9]{LMMR}): find eigenmodes $0 \neq \bsig^\infty:\Omega\to \bbS$ and eigenvalues  $\zeta\in \mathbb{R}$ such that,
\begin{align}\label{modelPbS}
\begin{split}
  -\beps\left( \bdiv  \bsig^\infty \right) &= \frac{3\zeta}{2}  (\bsig^\infty)^{\tD}     \quad \text{ in $\Omega$},
 \\
\bdiv \bsig^\infty &= 0  \quad \text{ on $\Gamma$},\\
\inner*{\tr{\bsig^\infty}, 1} &=0,
\end{split}
\end{align}
where $\btau^{\tD}:=\btau-\frac{1}{d}\left(\tr\btau\right) I$ is the deviatoric part of a tensor $\btau$. Actually, \eqref{modelPbS} is the stress formulation of the  Stokes eigenproblem \cite[Section 6.2]{meddahi} with formal velocity and pressure fields given by $\bu^\infty = -\frac{2\mu}{\zeta}\bdiv \bsig^\infty$ and $p^\infty := -\frac{1}{d} \tr \bsig^\infty$, respectively.

\begin{table}[!htb]
\centering
\footnotesize
\begin{tabular}{l l c c c c c} 

\toprule  
 \textbf{$k$} & \textbf{$h$} & $3\zeta_{1h}$ & $3\zeta_{2h}$ & $3\zeta_{3h}$ & $3\zeta_{4h}$ & $3\zeta_{5h}$
 \\ 
 \midrule

\multirow{4}{*}{2} 
& 1/2 & 14.655874562841 & 26.871985748572  & 26.887435102120 & 27.832597710403 & 42.832773681162
\\ 
& 1/4 & 14.683077038284 & 26.407208192082  & 26.417044912441 & 40.851768588874 & 40.872250820064
\\
& 1/8 & 14.682182085061 & 26.376255232771  & 26.376328260081 & 40.712722345786 & 40.713400774810
\\
& 1/16 & 14.681986181069 & 26.374698550739 & 26.374704623714 & 40.706735329788 & 40.706760417670
\\ \bottomrule
$\texttt{avg}(\boldsymbol{r}_{h,2}^i)$  &  & \textbf{3.57} & \textbf{4.18} & \textbf{4.17} & \textbf{5.18}  & \textbf{4.27}
 \\ 
 \toprule
\multirow{4}{*}{3} 
& 1/2  & 14.685386398526 & 26.399150575171 & 26.399686773148 & 40.827777930236 & 40.828051578839
\\ 
& 1/4  & 14.682071040481 & 26.375381383674 & 26.375576473616 & 40.710758108868 & 40.711302839983
\\
& 1/8  & 14.681971611106 & 26.374624687636 & 26.374624985835 & 40.706510271654 & 40.706514798709
\\
& 1/16 & 14.681970657338 & 26.374616530921 & 26.374616534235 & 40.706466349720 & 40.706466354220
\\ \bottomrule
$\texttt{avg}(\boldsymbol{r}_{h,3}^i)$  &  & \textbf{5.92} & \textbf{5.94} & \textbf{5.94} & \textbf{5.93}  & \textbf{5.93}
 \\ 
 \toprule
\multirow{4}{*}{4} 
& 1/2  & 14.682036415561 & 26.375811477238 & 26.375817494769 & 40.715004945300 & 40.715041953610
\\ 
& 1/4  & 14.681971020753 & 26.374623059451 & 26.374625770791 & 40.706530102238 & 40.706538022701
\\
& 1/8  & 14.681970642864 & 26.374616440514 & 26.374616441501 & 40.706465931297 & 40.706465952803
\\
& 1/16 & 14.681970642114 & 26.374616427187 & 26.374616427190 & 40.706465818502 & 40.706465821494
\\ \bottomrule
$\texttt{avg}(\boldsymbol{r}_{h,4}^i)$ &  & \textbf{7.55} & \textbf{8.52} & \textbf{8.47} & \textbf{8.25}  & \textbf{7.10}
\\ 	\bottomrule
\end{tabular}\caption{Computed lowest eigenvalues $\zeta_{jh}$, $j=1,\ldots ,5$, of problem~\ref{modelPbS} and averaged rates of convergence for a set of unstructured shape-regular curved meshes $\widetilde\cT_h$ with decreasing mesh sizes $h$ and for polynomial degrees of approximation $k = 2,3,4$. The exact eigenvalues are given by \eqref{exactE}.}\label{tableS}
\end{table}

It turns out that, on the unit disk, the eigenvalues of the Stokes eigenproblem~\eqref{modelPbS} are given by the sequence  $\set*{\tfrac{1}{2}\jmath_{n\ell}^2}_{n\geq 1,\, \ell \geq 1}$, where $\jmath_{nk}$ is the $\ell$-$th$ positive zero of the Bessel function $J_n$ of the first kind of order $n$.   Accurate approximations of the first 4 eigenvalues are given by  
  \begin{align}\label{exactE}
  \begin{split}
  	3\zeta_1 &= \jmath_{11}^2 \simeq 14.681970642124\\ 3\zeta_2 = 3\zeta_3 &= \jmath_{21}^2 \simeq 26.374616427163 \\ 3\zeta_4 = 3\zeta_5 &= \jmath_{31}^2 \simeq 40.706465818200.
  \end{split}
  \end{align}

To deal with the completely incompressible case $\nu=0.5$, one can adapt the DG method \eqref{Sh} for problem \eqref{modelPbS} by changing the bilinear form $\inner*{\bsig, \btau}_\cA$ in \eqref{S}  to $\frac{1}{2\mu}\inner*{\bsig^\tD, \btau^\tD}$ and incorporating the constraint $\inner*{\tr{\bsig^\infty}, 1} =0$ into $X$, see \cite[Section 6.2]{meddahi} for more details. Here, with the aim to test the performance of the scheme in the nearly incompressible case,  we instead approximate the eigenvalues of \eqref{modelPbS} by solving the original DG method \eqref{Sh} with a Poisson's ratio  $\nu = 0.5 - 10^{-13}$.

We denote by $\zeta_{ih} = \sqrt{3(\kappa_{ih} - 1)}$ the approximation of $\zeta_i$ computed  by solving problem~\ref{Sh} on a series of exact meshes $\widetilde\cT_h$ of $\bar \Omega$  with decreasing mesh sizes $h$, and for polynomial degrees $k = 2, 3, 4$. The assembling of the  generalized eigenproblems corresponding to \eqref{Sh} is performed thanks to the support of Netgen/NGSolve \cite{ngsolve}  for curved finite elements  of arbitrary order. We present in Table~\ref{tableS} the first four computed eigenvalues  and report the arithmetic mean $\texttt{avg}(\boldsymbol{r}_{h,k}^i)$ of the three experimental rates of convergence, which  are obtained for each eigenvalue by mean of the formula 
\begin{equation}\label{rate}
	\boldsymbol{r}_{h,k}^i:= \dfrac{\log\inner*{\abs{\zeta_i - \omega^2_{ih}} / \abs{\zeta_j - \omega^2_{i\hat{h}}}  }}{\log(h/\hat h)},\quad i=1,\ldots, 4,\quad k=2,3,4, 
\end{equation}
where $h$ and $\hat h$ are two consecutive mesh sizes.

We observe that a convergence of order $2k$ is attained for each eigenvalue, as predicted by the error estimate \eqref{eigenvalue}. At the same time, this test shows that the DG-scheme \eqref{Sh} is inmune to locking in the nearly incompressible limit. 

\begin{remark}
	In principle, for $k=2$, we need to perform a barycentric refinement of the mesh $\widetilde\cT_h$ to ensure Assumption~\ref{hyp}. However, the optimal order of convergence reported in Table~\ref{tableS} in the quadratic case seems to confirm the results obtained in Table~\ref{table2a}. In other words,  our  numerical tests  suggest that, in the two-dimensional case, the DG scheme \eqref{Sh} of quadratic order provides a spectrally correct  approximation of \eqref{S} with optimal rates of convergence for the eigenvalues on shape-regular triangulations. This statement is not supported by our theory since \eqref{infsupS} is not known to be satisfied for $k=2$. 
\end{remark}

\end{document}